\numberwithin{equation}{section}
\newtheorem{lemm}[equation]{Lemma}
\newtheorem{prop}[equation]{Proposition}
\newtheorem{thrm}[equation]{Theorem}
\DeclareMathOperator{\Supp}{Supp}
\DeclareMathOperator{\val}{v}
\title{Pseudovaluations on the de Rham-Witt complex}
\author{Rubén \textsc{Muñoz-{}-Bertrand}}
\begin{document}

\begin{abstract}
For a polynomial ring over a commutative ring of positive characteristic, we define on the associated de Rham-Witt complex a set of functions, and show that they are pseudovaluations in the sense of Davis, Langer and Zink. To achieve it, we explicitly compute products of basic elements on the complex. We also prove that the overconvergent de Rham-Witt complex can be recovered using these pseudovaluations.
\end{abstract}

\maketitle

\section*{Introduction}

Davis, Langer and Zink have introduced the overconvergent de Rham-Witt complex in \cite{overconvergentderhamwittcohomology}. It is a complex of sheaves defined on any smooth variety $X$ over a perfect field $k$ of positive characteristic. It can be used to compute both the Monksy-Washnitzer and the rigid cohomology of the variety.

This complex is defined as a differential graded algebra (dga) contained in the de Rham-Witt complex $W\Omega_{X/k}$ of Deligne and Illusie. In order to achieve this they have defined for any $\varepsilon>0$, in the case where $X$ is the spectrum of a polynomial ring $k[\underline{X}]$ over $k$, a function $\gamma_{\varepsilon}\colon W\Omega_{k[\underline{X}]/k}\to\mathbb{R}\cup\{+\infty,-\infty\}$. The overconvergent de Rham-Witt complex is the set of all $x\in W\Omega_{k[\underline{X}]/k}$ such that $\gamma_{\varepsilon}(x)\neq-\infty$ for some $\varepsilon>0$. In the general case, it is defined as the functional image of this set for a surjective morphism of smooth commutative algebras over $k$.

In degree zero (that is, for Witt vectors), these applications have nice properties. One of those is that they are pseudovaluations. We recall the definition: a \textbf{pseudovaluation} on a commutative ring $R$ is a function $v\colon R\to\mathbb{R}\cup\{+\infty,-\infty\}$ such that:
\begin{gather*}
v(0)=+\infty\text{,}\\
v(1)=0\text{,}\\
\forall r\in R,\ v(r)=v(-r)\text{,}\\
\forall r,s\in R,\ v(r+s)\geqslant\min\{v(r),v(s)\}\text{,}\\
\forall r,s\in R,\ \left(v(r)\neq-\infty\right)\wedge\left(v(s)\neq-\infty\right)\implies\left(v(rs)\geqslant v(r)+v(s)\right)\text{.}
\end{gather*}

Pseudovaluations and their behaviour have been studied in \cite{overconvergentwittvectors}. It appears that they form a convenient framework to study the overconvergence of recursive sequences. However, there are counterexamples showing that in positive degree, the applications $\gamma_{\varepsilon}$ are not pseudovaluations. This becomes an obstacle when one wants to study the local structure of the overconvergent de Rham-Witt complex, or when one tries to find an interpretation of $F$-isocrystals for the overconvergent de Rham-Witt complex following the work of Ertl \cite{ertlcomparison}.

In this paper, we define new applications $\zeta_{\varepsilon}\colon W\Omega_{k[\underline{X}]/k}\to\mathbb{R}\cup\{+\infty,-\infty\}$ for all $\varepsilon>0$ and prove that these are pseudovaluations. Moreover, we show that the set of all $x\in W\Omega_{k[\underline{X}]/k}$ such that $\zeta_{\varepsilon}(x)\neq-\infty$ for some $\varepsilon>0$ also define the overconvergent de Rham-Witt complex.

In order to do so, we recall in the first section the main results concerning the de Rham-Witt complex, especially in the case of a polynomial algebra. The second section, which is the most technical one, consists of computations of products of specific elements of the de Rham-Witt complex. The results are explicit, and proven in the case where $k$ is any commutative $\mathbb{Z}_{(p)}$-algebra. This enable us in the last section to define the pseudovaluations, and prove that in the case of a perfect field of positive characteristic we retrieve with these functions the overconvergent de Rham-Witt complex.

In subsequent papers, we will use these results in order to give a new interpretation of $F$-isocrystals and study the structure of the overconvergent de Rham-Witt complex. These results can also be found in my PhD thesis.

\section*{Acknowledgements}

This work is a slight generalization of a part of my PhD thesis. As such, I am very indebted to my advisor Daniel Caro, and to Laboratoire de Mathematiques Nicolas Oresme at Université de Caen Normandie. I would also like to thank all the members of my jury Andreas Langer, Tobias Schmidt, Andrea Pulita, Christine Huyghe and Jérôme Poineau for all their comments. I also had the luck to talk about these topics with Bernard Le Stum.

\section{The de Rham-Witt complex for a polynomial ring}

Let $p$ be a prime number. Let $k$ be a commutative $\mathbb{Z}_{(p)}$-algebra. Let $n\in\mathbb{N}$ and write $k[\underline{X}]\coloneqq k[X_{1},\ldots,X_{n}]$. We will first recall basic properties of the de Rham-Witt complex of $k[\underline{X}]$, denoted $\left(W\Omega_{k[\underline{X}]/k},d\right)$ (for an introduction, see \cite{complexedederhamwittetcohomologiecristalline} or \cite{derhamwittcohomologyforaproperandsmoothmorphism}). In degree zero, $W\Omega^{0}_{k[\underline{X}]/k}$ is isomorphic as a $W(k)$-algebra to $W(k[\underline{X}])$, the ring of Witt vectors over $k[\underline{X}]$.

There is a morphism of graded rings $F\colon W\Omega_{k[\underline{X}]/k}\to W\Omega_{k[\underline{X}]/k}$, a morphism of graded groups $V\colon W\Omega_{k[\underline{X}]/k}\to W\Omega_{k[\underline{X}]/k}$, as well as a morphism of monoids $[\bullet]\colon(k[\underline{X}],\times)\to(W(k[\underline{X}]),\times)$ such that:
\begin{gather}
\forall r\in k[\underline{X}],\ F([r])=[r^{p}]\text{,}\label{frobwittvect}\\
\forall x,y\in W\Omega_{k[\underline{X}]/k},\ V(xF(y))=V(x)y\text{,}\label{vxfyvxy}\\
\forall m\in\mathbb{N},\ \forall x\in W\Omega_{k[\underline{X}]/k},\ d(F^{m}(x))=p^{m}F^{m}(d(x))\text{,}\label{dfpfd}\\
\forall m\in\mathbb{N},\ \forall P\in W\Omega_{k[\underline{X}]/k},\ F^{m}(d([P]))=[P^{p^{m}-1}]d([P])\text{,}\label{fdttdt}\\
\begin{multlined}\forall i,j\in\mathbb{N},\ \forall x\in W\Omega^{i}_{k[\underline{X}]/k},\ \forall y\in W\Omega^{j}_{k[\underline{X}]/k},\\d(xy)=(-1)^{i}xd(y)+(-1)^{(i+1)j}yd(x)\text{,}\end{multlined}\label{dbasicprop}\\
\forall m\in\mathbb{N},\ \forall(x_{i})_{i\in\llbracket1,m\rrbracket}\in\left(W(k[\underline{X}])\right)^{m},\ d\left(\prod_{i=1}^{m}x_{i}\right)=\sum_{i=1}^{m}\left(\prod_{j\in\llbracket1,m\rrbracket\smallsetminus\{i\}}x_{j}\right)d(x_{i})\text{.}\label{dproducts}
\end{gather}

We are going to introduce basic elements on the de Rham-Witt complex, and recall how any de Rham-Witt differential on $k[\underline{X}]$ can be expressed as a series using these elements. We mostly follow \cite{derhamwittcohomologyforaproperandsmoothmorphism}.

A \textbf{weight function} is a mapping $a\colon\llbracket1,n\rrbracket\to\mathbb{N}\left[\frac{1}{p}\right]$; for all $i\in\llbracket1,n\rrbracket$, its values shall be written $a_{i}$. We define:
\begin{equation*}
\lvert a\rvert\coloneqq\sum_{i=1}^{n}a_{i}\text{.}
\end{equation*}

For any weight function $a$ and any $J\subset\llbracket1,n\rrbracket$, we denote by $a|_{J}$ the weight function which for all $i\in\llbracket1,n\rrbracket$ satisfies:
\begin{equation*}
a|_{J}(i)=\left\{\begin{array}{ll}a_{i}&\text{ if }i\in J\text{,}\\0&\text{ otherwise.}\end{array}\right.
\end{equation*}

The \textbf{support} of a weight function $a$ is the following set:
\begin{equation*}
\Supp(a)\coloneqq\{i\in\llbracket1,n\rrbracket\mid a_{i}\neq0\}\text{.}
\end{equation*}

A \textbf{partition} of a weight function $a$ is a subset of $\Supp(a)$. Its \textbf{size} is its cardinal. We will denote by $\mathcal{P}$ the set of all pairs $(a,I)$, where $a$ is a weight function and $I$ is a partition of $a$.

In all this paper, the $p$-adic valuation shall be denoted $\val_{p}$. We fix the following total order $\preceq$ on $\Supp(a)$:
\begin{multline*}
\forall i,i'\in\Supp(a),\ i\preceq i'\\\iff\left(\left(\val_{p}(a_{i})\leqslant\val_{p}(a_{i'})\right)\wedge\left(\left(\val_{p}(a_{i})=\val_{p}(a_{i'})\right)\implies\left(i\leqslant i'\right)\right)\right)\text{.}
\end{multline*}

This order depends on the choice of a weight function $a$, but no confusion will arise. We will denote by $\prec$ the associated strict total order, and we set $\min(a)\in\Supp(a)$ the only element such that $\min(a)\preceq i$ for any $i\in\Supp(a)$.

Let $I\coloneqq\left\{i_{j}\right\}_{j\in\llbracket1,m\rrbracket}$ be a partition of a weight function $a$. We will always suppose that $i_{j}\prec i_{j'}$ for all $j,j'\in\llbracket1,m\rrbracket$ such that $j<j'$. By convention, we will say that $i_{0}\preceq i$ and $i\prec i_{m+1}$ whenever $i\in\Supp(a)$. We define the following $m+1$ subsets of $\Supp(a)$ for any $l\in\llbracket0,m\rrbracket$:
\begin{equation*}
I_{l}\coloneqq\{i\in\Supp(a)\mid i_{l}\preceq i\prec i_{l+1}\}\text{.}
\end{equation*}

Let $a$ be a weight function. We set:
\begin{gather*}
\val_{p}(a)\coloneqq\min\{\val_{p}(a_{i})\mid i\in\llbracket1,n\rrbracket\}\text{,}\\
u(a)\coloneqq\max\{0,-\val_{p}(a)\}\text{.}
\end{gather*}

If $a$ is not the zero function, we put:
\begin{equation*}
g(a)\coloneqq F^{u(a)+\val_{p}(a)}\left(d\left(V^{u(a)}\left(\left[\underline{X}^{p^{-\val_{p}(a)}a|_{I_{0}}}\right]\right)\right)\right)\text{.}
\end{equation*}

Furthermore, if $I$ is a partition of $a$, and for any $\eta\in W(k)$, we set:
\begin{equation}\label{eelement}
e(\eta,a,I)\coloneqq\left\{\begin{array}{ll}d\left(V^{u(a)}\left(\eta\left[\underline{X}^{p^{u(a)}a|_{I_{0}}}\right]\right)\right)\times\prod_{l=2}^{\#I}g(a|_{I_{l}})&\text{if }I_{0}=\emptyset\text{ and }u(a)\neq0\text{,}\\V^{u(a)}\left(\eta\left[\underline{X}^{p^{u(a)}a|_{I_{0}}}\right]\right)\times\prod_{l=1}^{\#I}g(a|_{I_{l}})&\text{otherwise.}\end{array}\right.
\end{equation}

Notice that, if one ignores $\eta$, the element defined above is a product of $\#I$ factors whenever $I_{0}=\emptyset$, and of $\#I+1$ factors otherwise. We are going to use this property later on, when we will define the pseudovaluations on the de Rham-Witt complex of a polynomial ring.

We recall the action of $d$, $V$ and $F$ on these elements.

\begin{prop}\label{dactionone}
For any $(a,I)\in\mathcal{P}$ and any $\eta\in W(k)$ we have:
\begin{equation*}
d(e(\eta,a,I))=\left\{\begin{array}{ll}0&\text{if }I_{0}=\emptyset\text{,}\\e(\eta,a,I\cup\{\min(a)\})&\text{if }I_{0}\neq\emptyset\text{ and }\val_{p}(a)\leqslant0\text{,}\\p^{\val_{p}(a)}e(\eta,a,I\cup\{\min(a)\})&\text{if }I_{0}\neq\emptyset\text{ and }\val_{p}(a)>0\text{.}\end{array}\right.
\end{equation*}
\end{prop}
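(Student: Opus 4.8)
The plan is to unwind the definition of $e(\eta,a,I)$ given in \eqref{eelement} and apply the differential $d$ using the Leibniz-type rules \eqref{dbasicprop} and \eqref{dproducts}, together with the standard identities $d\circ d=0$, $d(V^m(x))=V^m(d(x))$ (a consequence of $V$ being a morphism of complexes, implicit in the setup) and \eqref{dfpfd}. The key observation is that $e(\eta,a,I)$ is, up to the factor coming from $V^{u(a)}(\eta[\underline{X}^{\ldots}])$ or its differential, a product of the elements $g(a|_{I_l})$, each of which is \emph{already} in the image of $d$ up to a power of $F$; hence by \eqref{dfpfd} and $d^2=0$ each $g(a|_{I_l})$ is killed by $d$ (more precisely $d(g(a|_{I_l}))=0$, since $g(a|_{I_l})=F^{\ast}(d(\cdots))$ and $d(F^m d(\cdots))=p^m F^m(d^2(\cdots))=0$). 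So when one differentiates the product defining $e(\eta,a,I)$ via \eqref{dbasicprop}, every term in which $d$ hits one of the $g$-factors vanishes, and only the term where $d$ hits the leading factor survives.

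I would then split into the two cases of \eqref{eelement}. In the case $I_0=\emptyset$ and $u(a)\neq 0$, the leading factor is itself $d\bigl(V^{u(a)}(\eta[\underline{X}^{p^{u(a)}a|_{I_0}}])\bigr)$, so $d$ applied to it is zero as well, giving $d(e(\eta,a,I))=0$; this matches the first branch of the Proposition (note that $I_0=\emptyset$ is exactly the hypothesis there). In the ``otherwise'' case, i.e. $I_0\neq\emptyset$ or $u(a)=0$, the leading factor is $V^{u(a)}(\eta[\underline{X}^{p^{u(a)}a|_{I_0}}])$. If $I_0=\emptyset$ here (which forces $u(a)=0$), then $[\underline{X}^{0}]=[1]=1$, so the leading factor is the constant $\eta$, which is killed by $d$ since $\eta\in W(k)$ and $d$ is $W(k)$-linear (or: $d(\eta)=0$ because $\eta$ lies in degree zero and is a Teichmüller-type constant — more carefully, $W(k)\subset W\Omega^0$ and $d$ restricted there is the de Rham–Witt differential of $k$, but since we work over $k$ it vanishes on $W(k)$); again we get $0$, consistent with the first branch. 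The substantive computation is when $I_0\neq\emptyset$.

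So suppose $I_0\neq\emptyset$. Then $d$ hits $V^{u(a)}\bigl(\eta[\underline{X}^{p^{u(a)}a|_{I_0}}]\bigr)$, and one computes $d\bigl(V^{u(a)}(\eta[\underline{X}^{p^{u(a)}a|_{I_0}}])\bigr)$. The idea is to recognize this, times a suitable power of $p$, as the leading factor $g(a|_{I_0\cup\{\min(a)\}\cap I_0})$... actually as precisely the new $g$-factor $g(a|_{J})$ that appears when one enlarges the partition $I$ to $I\cup\{\min(a)\}$: adding $\min(a)$ to the partition changes $I_0$ from $\{i\in\Supp(a)\mid i\prec i_1\}$ (which contains $\min(a)$) to $\{\min(a)\}$, and creates a new block whose associated $g$ is built from $d(V^{u(a)}([\ldots]))$. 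Matching exponents: $g(a|_{I_0})=F^{u(a)+\val_p(a)}(d(V^{u(a)}([\underline{X}^{p^{-\val_p(a)}a|_{I_0\cap I_0}}])))$, and one needs to see that applying $F^{u(a)+\val_p(a)}$ to $d\bigl(V^{u(a)}(\eta[\underline{X}^{p^{u(a)}a|_{I_0}}])\bigr)$ recovers it — here using \eqref{dfpfd} to pull $F$ through $d$, picking up $p^{u(a)+\val_p(a)}$, and observing $u(a)+\val_p(a)=\max\{0,-\val_p(a)\}+\val_p(a)=\max\{\val_p(a),0\}$, which is $0$ if $\val_p(a)\leqslant 0$ and $\val_p(a)$ if $\val_p(a)>0$. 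This is exactly the source of the case distinction on the sign of $\val_p(a)$ in the statement, and the power $p^{\val_p(a)}$ in the third branch. The main obstacle is bookkeeping: one must carefully track how the blocks $I_l$, the exponent $u$, and the Teichmüller monomials reindex when $\min(a)$ is inserted into the partition, and check that $F$ and $V$ interact with the monomials as claimed (via \eqref{frobwittvect}, \eqref{vxfyvxy}, and the explicit form of $g$), so that the product $\prod_l g(a|_{I_l})$ for the enlarged partition is genuinely reproduced with the correct scalar. Once the indexing is pinned down, the identity falls out of \eqref{dbasicprop}, \eqref{dfpfd}, and $d^2=0$.
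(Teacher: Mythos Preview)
The paper does not actually prove this proposition: its entire proof is the sentence ``See \cite[Proposition 2.6]{derhamwittcohomologyforaproperandsmoothmorphism}''. So there is no in-paper argument to compare against; your direct computation is already more than what the paper provides.

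Your sketch is correct in substance and is the natural way to prove the statement from the defining identities. The two key ingredients---that each factor $g(a|_{I_l})$ satisfies $d(g(a|_{I_l}))=0$ via \eqref{dfpfd} and $d^2=0$, and that therefore the Leibniz rule \eqref{dbasicprop} leaves only the term where $d$ hits the leading factor---are exactly right. Your handling of the subcase $I_0=\emptyset$ with $u(a)=0$ (leading factor $\eta$, killed by $d$) is also correct. The final paragraph becomes a bit tangled notationally (e.g.\ the expression $g(a|_{I_0\cup\{\min(a)\}\cap I_0})$ and the phrase about ``applying $F^{u(a)+\val_p(a)}$ to $d(\ldots)$''), but the underlying idea is sound: when you adjoin $\min(a)$ to $I$, the new partition $I'=I\cup\{\min(a)\}$ has $I'_0=\emptyset$, $I'_1=I_0$, and $I'_{l}=I_{l-1}$ for $l\geqslant 2$, so one only needs to match $d\bigl(V^{u(a)}(\eta[\underline{X}^{p^{u(a)}a|_{I_0}}])\bigr)$ with the new leading factor of $e(\eta,a,I')$. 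Your computation $u(a)+\val_p(a)=\max\{0,\val_p(a)\}$ is precisely what produces the factor $p^{\val_p(a)}$ in the case $\val_p(a)>0$ and nothing in the case $\val_p(a)\leqslant 0$. If you tighten the reindexing in that last paragraph, the argument goes through cleanly.
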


\begin{proof}
See \cite[Proposition 2.6]{derhamwittcohomologyforaproperandsmoothmorphism}.
\end{proof}

\begin{prop}\label{factionone}
For any $(a,I)\in\mathcal{P}$ and any $\eta\in W(k)$ we have:
\begin{equation*}
F(e(\eta,a,I))=\left\{\begin{array}{ll}e\left(\eta,pa,I\right)&\text{if }\val_{p}(a)<0\text{ and }I_{0}=\emptyset\text{,}\\e\left(p\eta,pa,I\right)&\text{if }\val_{p}(a)<0\text{ and }I_{0}\neq\emptyset\text{,}\\e\left(F(\eta),pa,I\right)&\text{if }\val_{p}(a)\geqslant0\text{.}\end{array}\right.
\end{equation*}
\end{prop}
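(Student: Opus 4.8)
The plan is to use that $F$ is a morphism of graded rings, so that it distributes over the product defining $e(\eta,a,I)$ in \eqref{eelement}, and then to compute its effect factor by factor. I would begin by recording the invariances under $a\mapsto pa$: since $\Supp(pa)=\Supp(a)$ and $\val_p\bigl((pa)_i\bigr)=\val_p(a_i)+1$ for every $i$, the order $\preceq$, the blocks $I_0,\dots,I_{\#I}$ and the element $\min(a)$ are unchanged; only $\val_p$ and $u$ move, via $\val_p(pa)=\val_p(a)+1$, while $u(pa)=u(a)$ if $\val_p(a)\geqslant0$ and $u(pa)=u(a)-1$ if $\val_p(a)<0$. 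Thus the partition data feeding $e(\cdot,pa,I)$ are exactly those of $e(\cdot,a,I)$, and the statement reduces to a factorwise computation together with a check that the resulting product has the shape prescribed by \eqref{eelement} for the pair $(pa,I)$.

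The main factorwise input is the identity $F\bigl(g(b)\bigr)=g(pb)$ for every nonzero weight function $b$. If $\val_p(b)\geqslant0$, then $u(b)=u(pb)=0$, so $g(b)=F^{\val_p(b)}\bigl(d\bigl(\bigl[\underline{X}^{p^{-\val_p(b)}b}\bigr]\bigr)\bigr)$, and applying $F$ and using the identity $p^{-\val_p(pb)}(pb)=p^{-\val_p(b)}b$ of weight functions gives $F\bigl(g(b)\bigr)=F^{\val_p(b)+1}\bigl(d\bigl(\bigl[\underline{X}^{p^{-\val_p(b)}b}\bigr]\bigr)\bigr)=g(pb)$. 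If $\val_p(b)<0$, then $u(b)+\val_p(b)=0$, so $g(b)=d\bigl(V^{u(b)}\bigl(\bigl[\underline{X}^{p^{u(b)}b}\bigr]\bigr)\bigr)$; writing $V^{u(b)}=V\circ V^{u(b)-1}$ and using the de Rham-Witt relation $FdV=d$ gives $F\bigl(g(b)\bigr)=d\bigl(V^{u(b)-1}\bigl(\bigl[\underline{X}^{p^{u(b)}b}\bigr]\bigr)\bigr)$, which one identifies with $g(pb)$ by a short computation separating $\val_p(b)=-1$, where $u(pb)=0$, from $\val_p(b)\leqslant-2$, where $u(pb)=u(b)-1\geqslant1$.

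It remains to compute $F$ on the leading factor of \eqref{eelement} (the one carrying $\eta$) and to reassemble using the previous step. When $\val_p(a)\geqslant0$ the leading factor is $\eta\bigl[\underline{X}^{a|_{I_0}}\bigr]$, and $F\bigl(\eta\bigl[\underline{X}^{a|_{I_0}}\bigr]\bigr)=F(\eta)\bigl[\underline{X}^{(pa)|_{I_0}}\bigr]$ by \eqref{frobwittvect}, which is the leading factor of $e\bigl(F(\eta),pa,I\bigr)$, whence $F\bigl(e(\eta,a,I)\bigr)=e\bigl(F(\eta),pa,I\bigr)$. When $\val_p(a)<0$ and $I_0\neq\emptyset$ the leading factor is $V^{u(a)}\bigl(\eta\bigl[\underline{X}^{p^{u(a)}a|_{I_0}}\bigr]\bigr)$, and the standard relation $FV=p$ together with the additivity of $V$ makes its image under $F$ equal to $V^{u(pa)}\bigl(p\eta\bigl[\underline{X}^{p^{u(pa)}(pa)|_{I_0}}\bigr]\bigr)$, the leading factor of $e(p\eta,pa,I)$; hence $F\bigl(e(\eta,a,I)\bigr)=e(p\eta,pa,I)$. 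In the remaining case $\val_p(a)<0$, $I_0=\emptyset$, the leading factor is a differential $d\bigl(V^{u(a)}(\,\cdot\,)\bigr)$, which $FdV=d$ turns into $d\bigl(V^{u(a)-1}(\,\cdot\,)\bigr)$: for $\val_p(a)\leqslant-2$ this is once more the leading factor of $e(\eta,pa,I)$, still in the first branch of \eqref{eelement}, and one concludes as before; for $\val_p(a)=-1$, however, $u(pa)=0$, so $e(\eta,pa,I)$ now lies in the second branch of \eqref{eelement}, and the Verschiebung has vanished, leaving $F$ of the leading factor equal to $d\bigl(\eta\bigl[\underline{X}^{(pa)|_{I_1}}\bigr]\bigr)=\eta\,d\bigl(\bigl[\underline{X}^{(pa)|_{I_1}}\bigr]\bigr)=\eta\,g\bigl((pa)|_{I_1}\bigr)$, using that $d\eta=0$ (as $d$ is $W(k)$-linear) and that $\val_p\bigl((pa)|_{I_1}\bigr)=0$. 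This absorbs both the leading factor and the $l=1$ term of $e(\eta,pa,I)$, and reassembling with $F\bigl(g(a|_{I_l})\bigr)=g\bigl((pa)|_{I_l}\bigr)$ produces exactly $e(\eta,pa,I)$.

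The step I expect to be the main obstacle is precisely this last one: the shape dictated by \eqref{eelement} genuinely changes when $a$ is replaced by $pa$ in the regime $\val_p(a)=-1$ with $I_0=\emptyset$ (the first branch collapses into the second), so one must check that the extra differentiation carried by the leading factor recombines with the $g\bigl((pa)|_{I_1}\bigr)$ term to rebuild the correct element — a recombination resting on $d$ being $W(k)$-linear. Everything else is bookkeeping once the invariances of the first paragraph and the relations $FV=p$, $FdV=d$ and \eqref{frobwittvect} are available.
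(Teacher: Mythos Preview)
The paper does not prove this proposition; it simply cites \cite[Proposition 2.5]{derhamwittcohomologyforaproperandsmoothmorphism}. Your argument is a correct direct verification, carried out factor by factor via the identity $F\bigl(g(b)\bigr)=g(pb)$ together with the standard Witt-complex relations $FV=p$ and $FdV=d$ (axioms of the Langer--Zink $F$-$V$-procomplex, though not among the identities \eqref{frobwittvect}--\eqref{dproducts} displayed in this paper). The case analysis is sound, including the delicate boundary case $\val_p(a)=-1$, $I_0=\emptyset$, where the branch of \eqref{eelement} changes under $a\mapsto pa$.

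Two remarks. First, you have tacitly corrected a typo in \eqref{eelement}: in the first branch ($I_0=\emptyset$, $u(a)\neq0$) the exponent should be $a|_{I_1}$ rather than $a|_{I_0}$, since $a|_{I_0}$ is the zero function there and the formula would otherwise collapse; your handling of the $\val_p(a)=-1$ transition only makes sense with this reading, and Proposition~\ref{dactionone} confirms it is the intended one. Second, the justification ``as $d$ is $W(k)$-linear'' is phrased backwards: what one uses is that $d$ vanishes on $W(k)$ in the relative complex $W\Omega_{k[\underline{X}]/k}$, and \emph{that} is what makes $d$ $W(k)$-linear. The conclusion $d\bigl(\eta\,[\underline{X}^{(pa)|_{I_1}}]\bigr)=\eta\,d\bigl([\underline{X}^{(pa)|_{I_1}}]\bigr)$ is correct either way.
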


\begin{proof}
See \cite[Proposition 2.5]{derhamwittcohomologyforaproperandsmoothmorphism}.
\end{proof}

\begin{prop}\label{vactionone}
For any $(a,I)\in\mathcal{P}$ and any $\eta\in W(k)$ we have:
\begin{equation*}
V(e(\eta,a,I))=\left\{\begin{array}{ll}e\left(V(\eta),\frac{a}{p},I\right)&\text{if }\val_{p}(a)>0\text{,}\\e\left(p\eta,\frac{a}{p},I\right)&\text{if }\val_{p}(a)\leqslant0\text{ and }I_{0}=\emptyset\text{,}\\e\left(\eta,\frac{a}{p},I\right)&\text{if }\val_{p}(a)\leqslant0\text{ and }I_{0}\neq\emptyset\text{.}\end{array}\right.
\end{equation*}
\end{prop}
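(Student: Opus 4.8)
The plan is to reduce the computation of $V(e(\eta,a,I))$ to the definition \eqref{eelement} and the basic identities \eqref{frobwittvect}--\eqref{dproducts}, treating the three cases according to the sign of $\val_{p}(a)$. First I would record how the invariants entering the definition of $e(\eta,a,I)$ change when $a$ is replaced by $a/p$: one has $\val_{p}(a/p)=\val_{p}(a)-1$, hence $u(a/p)=u(a)+1$ when $\val_{p}(a)\leqslant 0$, while $u(a/p)=u(a)$ when $\val_{p}(a)\geqslant 1$ (both being zero in that range), and $\Supp(a/p)=\Supp(a)$ with the order $\preceq$ unchanged, so the partition data $I$, the sets $I_{l}$, and $\min(a)$ are all unaffected. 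Since $V$ is only additive, not multiplicative, the key point is to move a single $V$ across the product of $\#I$ or $\#I+1$ factors appearing in $e(\eta,a,I)$; this is done using the projection formula \eqref{vxfyvxy} in the form $V(x)y=V(xF(y))$, so I would first apply $V$ to the leftmost factor and then use \eqref{vxfyvxy} repeatedly to absorb it, after rewriting each $g(a|_{I_{l}})$ as $F$ of something. By \eqref{dfpfd} and the definition of $g$, one has $g(a|_{I_{l}})=F\bigl(F^{u(a|_{I_{l}})+\val_{p}(a)-1}(d(V^{u(a|_{I_{l}})}([\,\cdots\,])))\bigr)$ whenever $\val_{p}(a)\geqslant 1$, and a similar manipulation absorbing a $V$ upstairs in the other cases, so the product $\prod_{l} g(a|_{I_{l}})$ is $F$ of the corresponding product for $a/p$ up to the bookkeeping of $u$.

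The three cases then separate cleanly. When $\val_{p}(a)>0$ we are in the ``otherwise'' branch of \eqref{eelement} with $u(a)=0$, so $e(\eta,a,I)=[\underline X^{a|_{I_0}}]\prod_{l\geqslant 1} g(a|_{I_l})$; applying $V$, rewriting $[\underline X^{a|_{I_0}}]$ via $V(\eta[\underline X^{p a|_{I_0}/p}])$ is not literally possible since $u=0$, so instead I use $V(x)=V(x\cdot F([1]))$ and the identity $V(\eta x)F(\,\cdot\,)$-style manipulations together with $V([r])$ formulas, landing on $V(\eta)$ in the first slot and $a/p$ in the exponent with $u(a/p)=1$; this matches $e(V(\eta),a/p,I)$. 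When $\val_{p}(a)\leqslant 0$ and $I_0=\emptyset$ we are in the first branch of \eqref{eelement} (since $u(a)\neq 0$ as $\val_p(a)<0$, unless $\val_p(a)=0$ in which case $u=0$ and we are instead in the ``otherwise'' branch with an empty $I_0$ term); here $e(\eta,a,I)=d(V^{u(a)}(\eta[\underline X^{p^{u(a)}a|_{I_0}}]))\prod_{l\geqslant 2}g(a|_{I_l})$, and since $a|_{I_0}=0$ this first factor is $d(V^{u(a)}(\eta))$. Applying $V$ and using $V(d(V^{u(a)}(\eta)))=d(V^{u(a)+1}(\eta))$ up to the compatibility $Vd=dV$ (which follows from \eqref{vxfyvxy} and \eqref{dfpfd}, or is standard), together with $u(a/p)=u(a)+1$, we need the coefficient $\eta$ to become $p\eta$: this is exactly where the $p$ comes from, namely $V(d(V^{u(a)}(\eta)))$ versus $d(V^{u(a)+1}(\eta))$ differ by the factor $p$ coming from $dV^{m}=p\,V^{m-1}dV$-type relations, or equivalently from pushing $V$ inside via $FV=p$. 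Finally, when $\val_p(a)\leqslant 0$ and $I_0\neq\emptyset$ we are in the ``otherwise'' branch, $e(\eta,a,I)=V^{u(a)}(\eta[\underline X^{p^{u(a)}a|_{I_0}}])\prod_{l\geqslant 1}g(a|_{I_l})$, and applying $V$ simply raises $u(a)$ to $u(a)+1=u(a/p)$ with no change to $\eta$, giving $e(\eta,a/p,I)$.

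The main obstacle I anticipate is the careful handling of the non-multiplicativity of $V$ across the product: one must verify that pulling the single $V$ through each of the $g$-factors via the projection formula \eqref{vxfyvxy} produces exactly the $g$-factors for $a/p$ and no stray powers of $p$ or sign, and in particular that the ``extra $p$'' in the middle case is not spuriously duplicated when it interacts with the $d$. Concretely, I would need the identities $dV=p^{-1}Vd$ where defined, or better $Vd(x)=\text{(something)}$ derived from $FdV=p\,d$ and $FV=p$, and track that the leftmost factor is the only place where the coefficient $\eta$ lives, so that the three different outputs $V(\eta)$, $p\eta$, $\eta$ are each accounted for by exactly one mechanism (namely: $V$ literally applied to $\eta$ when $u$ starts at $0$; the $dV^{u}$ versus $V^{u+1}$ discrepancy when $I_0=\emptyset$; and the trivial raising of $u$ when $I_0\neq\emptyset$). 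A clean way to organize this, which I would adopt, is to first prove the statement for $\#I\leqslant 1$ (or even just the generator $g(a)$ and $V^{u}([\,\cdot\,])$ separately) and then reduce the general case to that by the projection formula, so that the combinatorics of $I_l$ never actually needs to be touched beyond noting $\Supp$, $\preceq$, and the $I_l$ are $p$-scaling invariant.
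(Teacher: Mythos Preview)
The paper does not argue this statement at all; it simply cites \cite[Proposition 2.5]{derhamwittcohomologyforaproperandsmoothmorphism}. Your plan---push a single $V$ across the product using the projection formula \eqref{vxfyvxy}, after recognising each factor $g(a|_{I_l})$ as $F$ applied to $g((a/p)|_{I_l})$---is the right one, and your bookkeeping for how $\Supp$, $\preceq$, and the $I_l$ behave under $a\mapsto a/p$ is correct. But there is a genuine error in how you handle the differential in the middle case.

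You claim $Vd=dV$ and say it ``follows from \eqref{vxfyvxy} and \eqref{dfpfd}, or is standard''. It does not follow, and it is false: together with $FdV=d$ and $FV=p$ it would force $pd=d$. The identity that actually holds in the de Rham--Witt complex is
\[
V(x\,dy)\;=\;V(x)\,dV(y),
\]
obtained from the projection formula and $FdV=d$; taking $x=1$ gives $V(dy)=V(1)\,dV(y)$, and $V(1)\neq 1$. Your paragraph is in fact self-contradictory here: you first assert $V\bigl(dV^{u(a)}(\eta)\bigr)=dV^{u(a)+1}(\eta)$, then immediately say these ``differ by the factor $p$''. The correct mechanism producing the $p$ in the case $\val_p(a)\leqslant 0$, $I_0=\emptyset$ is this: after stripping off the tail $\prod_{l\geqslant 2}g(a|_{I_l})=F\bigl(\prod_{l\geqslant 2}g((a/p)|_{I_l})\bigr)$ via projection, applying $V$ to the remaining factor $d\bigl(V^{u(a)}(\eta[\underline{X}^{\ldots}])\bigr)$ yields $V(1)\cdot d\bigl(V^{u(a)+1}(\eta[\underline{X}^{\ldots}])\bigr)$; since $d$ annihilates $W(k)$ this equals $d\bigl(V(1)\,V^{u(a)+1}(\eta[\underline{X}^{\ldots}])\bigr)$, and finally $V(1)\,V^m(z)=V\bigl(FV^m(z)\bigr)=V\bigl(pV^{m-1}(z)\bigr)=pV^m(z)$ for $m\geqslant 1$. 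That is where $p\eta$ comes from---not from any ``$dV^m=pV^{m-1}dV$-type relation''.

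There is also a slip in the first case: you correctly record at the outset that $u(a/p)=0$ when $\val_p(a)\geqslant 1$, but then write ``with $u(a/p)=1$''. The target $e(V(\eta),a/p,I)$ lies again in the ``otherwise'' branch with $u(a/p)=0$; the coefficient $V(\eta)$ appears simply because $V(\eta)\,[\underline{X}^{(a/p)|_{I_0}}]=V\bigl(\eta\,F([\underline{X}^{(a/p)|_{I_0}}])\bigr)=V\bigl(\eta[\underline{X}^{a|_{I_0}}]\bigr)$ via projection, not from any shift in $u$.
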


\begin{proof}
See \cite[Proposition 2.5]{derhamwittcohomologyforaproperandsmoothmorphism}.
\end{proof}

The following theorem is essential to the definition of the overconvergent de Rham-Witt complex, and to its decomposition as a $W(k)$-module in the case of a polynomial algebra.

\begin{thrm}\label{structuretheorem}
For any differential $x\in W\Omega_{k[\underline{X}]/k}$ there exists an unique function $\eta\colon\begin{array}{rl}\mathcal{P}\to&W(k)\\(a,I)\mapsto&\eta_{a,I}\end{array}$ such that:
\begin{equation*}
x=\sum_{(a,I)\in\mathcal{P}}e(\eta_{a,I},a,I)\text{.}
\end{equation*}

Moreover, a series of that form converges in $W\Omega_{k[\underline{X}]/k}$ if and only if for any $m\in\mathbb{N}$ we have $V^{u(a)}(\eta_{a,I})\in V^{m}(W(k))$ except for a finite number of $(a,I)\in\mathcal{P}$.
\end{thrm}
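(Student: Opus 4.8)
The plan is to prove the two assertions separately: first the existence and uniqueness of the decomposition, then the convergence criterion. For the decomposition, I would appeal to the known structure theory of the de Rham-Witt complex of a polynomial ring, following the reference \cite{derhamwittcohomologyforaproperandsmoothmorphism}. The idea is that $W\Omega_{k[\underline{X}]/k}$ decomposes, as a $W(k)$-module, into a direct product of pieces indexed by pairs $(a,I)\in\mathcal{P}$, where on the piece attached to $(a,I)$ the element $e(\eta,a,I)$ realizes an isomorphism from $W(k)$ (or rather from the relevant quotient, since $V^{u(a)}$ may have a kernel on truncated Witt vectors — but here we work with the full Witt vectors so the map $\eta\mapsto e(\eta,a,I)$ is injective). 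Concretely, I would first treat the case $n=1$, where one sees directly from the standard basis of $W\Omega_{k[X]/k}$ (Witt vectors of the form $V^i([X^j]\eta)$ and differentials $dV^i([X^j]\eta)$, together with their $F$-translates, suitably normalized) that every element has a unique expansion; then I would bootstrap to general $n$ using the fact that $W\Omega_{k[\underline{X}]/k}$ is, up to completion, the tensor product over $W(k)$ of the $W\Omega_{k[X_i]/k}$ and that the basic elements $e(\eta,a,I)$ are precisely the images of tensor products of one-variable basic elements under the multiplication map; the partition $I$ records on which variables a $d$ has been applied, and the order $\preceq$ together with the sets $I_l$ encodes how the $F$'s and $V$'s interact across variables (this is exactly the content of Propositions \ref{dactionone}, \ref{factionone} and \ref{vactionone}). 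Uniqueness follows because distinct $(a,I)$ land in distinct summands of this direct-product decomposition.

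For the convergence criterion, I would recall that $W\Omega_{k[\underline{X}]/k}$ is complete and separated for the topology defined by the filtration whose $m$-th term $\mathrm{Fil}^m$ is generated by $V^m$ and $dV^m$ of everything (the canonical filtration coming from the pro-system of truncated de Rham-Witt complexes $W_m\Omega_{k[\underline{X}]/k}$). So a series $\sum_{(a,I)} e(\eta_{a,I},a,I)$ converges if and only if, for every $m$, all but finitely many of its terms lie in $\mathrm{Fil}^m$. Thus the real task is to identify, for a single basic element $e(\eta,a,I)$, the largest $m$ with $e(\eta,a,I)\in\mathrm{Fil}^m$, and to show this is governed precisely by the condition ``$V^{u(a)}(\eta)\in V^m(W(k))$''. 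I would argue this by writing $e(\eta,a,I)$ via its definition \eqref{eelement} as $V^{u(a)}(\eta[\underline{X}^{\cdots}])$ (or $dV^{u(a)}(\cdots)$ when $I_0=\emptyset$ and $u(a)\neq 0$) times a product of $g$-factors, noting that each $g$-factor $g(a|_{I_l})$ is a $d$ of a $V$-power (or an $F$-power thereof) and hence lies in $\mathrm{Fil}^0$ but does not decrease the filtration level below what the leading $V^{u(a)}(\eta)$ contributes — more precisely, using \eqref{vxfyvxy} one pushes the $V$'s out, and the point is that the filtration level of a product $V^a(x)\cdot V^b(y)$ in this complex is $\min\{a,b\}$-ish, but here the combinatorics of the $I_l$ are arranged exactly so that only the $I_0$-factor carries a genuine $V$-power while the other factors, being built from $g(a|_{I_l})$ with $\min(a|_{I_l})$ having $p$-adic valuation $\geqslant 0$ after the normalization, contribute $F$-powers that do not lower the level. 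Hence $e(\eta,a,I)\in\mathrm{Fil}^m \iff V^{u(a)}(\eta)\in\mathrm{Fil}^m\cap W(k)=V^m(W(k))$, which is the claimed condition.

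The main obstacle I anticipate is the second part: pinning down the exact filtration level of a basic element $e(\eta,a,I)$, because the definition \eqref{eelement} is a product of several factors $g(a|_{I_l})$ each of which individually has filtration level $0$, and one must check that multiplying by them neither raises the level (which would make the criterion too weak) nor allows cancellation that lowers it. Controlling this requires knowing that the $g$-factors are, after the normalization built into $g$, genuinely ``units at filtration level $0$'' in the appropriate sense — i.e.\ that $e(\eta,a,I)$ together with the analogous elements spans a free-ish $W(k)$-summand on which the filtration is read off coordinatewise. This is where one genuinely needs the structure theorem's full strength (or the computations of Section 2), and it is cleanest to deduce it from the explicit $W(k)$-module decomposition rather than to re-derive it; with that decomposition in hand the filtration statement becomes the observation that $\mathrm{Fil}^m$ is the product of its intersections with the summands, and on the $(a,I)$-summand that intersection is $\{e(\eta,a,I)\mid V^{u(a)}(\eta)\in V^m(W(k))\}$. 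I would also remark that the ``only if'' direction of the convergence statement is the easy one (a convergent series has terms tending to $0$), so the work is entirely in the ``if'' direction combined with completeness.
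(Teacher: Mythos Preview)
The paper does not actually prove this theorem: its entire proof is the one-line citation ``See \cite[Theorem 2.8]{derhamwittcohomologyforaproperandsmoothmorphism}.'' So there is no substantive argument in the paper to compare your sketch against.

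Your proposal goes well beyond the paper by attempting to outline the actual proof from the Langer--Zink reference. The strategy you describe (treat the one-variable case directly, then pass to several variables via a completed tensor product, and read the convergence criterion off the canonical filtration $\mathrm{Fil}^m$ coming from the truncations $W_m\Omega$) is indeed the shape of the argument in that reference. You correctly identify the crux of the convergence statement: one must show that the filtration decomposes along the $(a,I)$-summands and that on each summand the level is exactly $\val_V(V^{u(a)}(\eta))$. Your own caveat is apt --- the claim that the $g$-factors neither raise nor lower the filtration level is precisely where the work lies, and your sketch does not settle it (the phrase ``$\min\{a,b\}$-ish'' signals this). If you were to complete the argument you would need the explicit identification of $\mathrm{Fil}^m$ with the product of its intersections with the summands, which in Langer--Zink is obtained simultaneously with the decomposition itself rather than deduced from it afterwards.

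In short: your sketch is a reasonable reconstruction of the cited proof, with the expected soft spot in the filtration computation, but the paper itself simply defers to the literature and offers nothing to compare with.
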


\begin{proof}
See \cite[Theorem 2.8]{derhamwittcohomologyforaproperandsmoothmorphism}.
\end{proof}

\section{Computations}

Let $k$ be a commutative $\mathbb{Z}_{(p)}$-algebra. Let $n\in\mathbb{N}$, and let $k[\underline{X}]\coloneqq k[X_{1},\ldots,X_{n}]$. For any $(a,I)\in\mathcal{P}$ with $a$ taking values in $\mathbb{N}$, we will write $\underline{X}^{k}\coloneqq\prod_{j\in J}{X_{j}}^{a_{j}}$ and:
\begin{equation*}
h(a,I)\coloneqq\prod_{i\in\Supp(a)\smallsetminus I}\left[X_{i}\right]^{k_{i}}\prod_{j\in I}g(a|_{\{j\}})\text{.}
\end{equation*}

The goal of this section is to make explicit computations of the product of two elements of the form \eqref{eelement}. We will use the elements $h$ defined above in order to achieve this, as they appear more convenient for calculations.

\begin{lemm}\label{teichmullerlemma}
Let $R$ be a commutative $k$-algebra. Let $x\in R$ and let $m,m'\in\mathbb{N}$ such that $m+m'\neq0$. Put $a\coloneqq\val_{p}(m+m')$ and $b\coloneqq p^{-a}(m+m')$. Then:
\begin{equation*}
[x]^{m}d\left([x]^{m'}\right)=\frac{m'}{b}F^{a}\left(d\left([x]^{b}\right)\right)\text{.}
\end{equation*}
\end{lemm}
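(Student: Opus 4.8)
The plan is to reduce everything to the formula \eqref{fdttdt}, which computes $F^{a}(d([P]))$ as $[P^{p^{a}-1}]d([P])$, combined with the Leibniz rule \eqref{dproducts} for $d$ on Witt vectors (applied here to powers of a single Teichmüller lift). First I would dispose of trivial cases: if $m'=0$ the left-hand side is $[x]^{m}d(1)=0$ and the right-hand side is $0$ as well since $m'=0$, so we may assume $m'\geqslant 1$; similarly if $m=0$ the claim is that $d([x]^{m'})=\frac{m'}{b}F^{a}(d([x]^{b}))$, which is the heart of the matter. So the key identity to establish is, for all $m'\geqslant 1$ with $a=\val_{p}(m')$ and $b=p^{-a}m'$ when $m=0$, and more generally the stated formula.

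Next I would prove the general formula as follows. Applying the Leibniz rule \eqref{dproducts} to the product $[x]^{b}=\prod_{i=1}^{b}[x]$ (using that $[\bullet]$ is a morphism of monoids, so $[x^{b}]=[x]^{b}$) gives $d([x]^{b})=b[x]^{b-1}d([x])$. Hence, using \eqref{fdttdt} with $P\coloneqq [x]$ and exponent $p^{a}$, together with $F$ being a ring morphism so that $F^{a}([x]^{b-1})=[x^{p^{a}(b-1)}]=[x]^{p^{a}(b-1)}$ and $F^{a}(d([x]))=[x]^{p^{a}-1}d([x])$:
\begin{equation*}
F^{a}\left(d\left([x]^{b}\right)\right)=b\,F^{a}\left([x]^{b-1}\right)F^{a}\left(d\left([x]\right)\right)=b\,[x]^{p^{a}(b-1)}[x]^{p^{a}-1}d\left([x]\right)=b\,[x]^{p^{a}b-1}d\left([x]\right)\text{.}
\end{equation*}
Since $p^{a}b=m+m'$, this reads $F^{a}(d([x]^{b}))=b\,[x]^{m+m'-1}d([x])$. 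On the other hand, applying the Leibniz rule once more to $[x]^{m'}=\prod_{i=1}^{m'}[x]$ gives $d([x]^{m'})=m'[x]^{m'-1}d([x])$, so $[x]^{m}d([x]^{m'})=m'[x]^{m+m'-1}d([x])$. Comparing the two displays yields $[x]^{m}d([x]^{m'})=\frac{m'}{b}F^{a}(d([x]^{b}))$, as desired. Note $b$ is a $p$-adic unit, so $\frac{m'}{b}\in\mathbb{Z}_{(p)}$ acts on the $W(k)$-module $W\Omega_{R/k}$; more simply, both sides are integer multiples of $[x]^{m+m'-1}d([x])$ once one clears $b$.

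The main subtlety — not so much an obstacle as a point requiring care — is the coefficient bookkeeping: one must check that $\frac{m'}{b}\cdot b=m'$ as an identity of operators on the complex, i.e. that multiplication by the integer $b$ is injective enough, or rather simply avoid division altogether by rephrasing the target identity as $b\,[x]^{m}d([x]^{m'})=m'\,F^{a}(d([x]^{b}))$, which is what the computation above literally proves; the division by $b$ in the statement is then legitimate because $b\in\mathbb{Z}_{(p)}^{\times}$ and $W\Omega_{R/k}$ is a $\mathbb{Z}_{(p)}$-module. A second point to verify is that \eqref{dproducts} is stated for Witt vectors $x_{i}\in W(k[\underline{X}])$, whereas here we want it over the $k$-algebra $R$; but the de Rham-Witt complex and all the structural identities \eqref{frobwittvect}--\eqref{dproducts} hold verbatim over any commutative $\mathbb{Z}_{(p)}$-algebra, in particular with $R$ in place of $k[\underline{X}]$, so the Leibniz rule applies to $[x]\in W(R)$ without change.
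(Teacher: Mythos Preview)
Your proof is correct and takes a genuinely different route from the paper's. The paper argues via \eqref{dfpfd}: from $(m+m')[x]^{m}d([x]^{m'})=m'd([x]^{m+m'})$ and $d([x]^{m+m'})=d(F^{a}([x]^{b}))=p^{a}F^{a}(d([x]^{b}))$ it obtains
\[
p^{a}b\,[x]^{m}d([x]^{m'})=p^{a}m'\,F^{a}(d([x]^{b}))\text{,}
\]
and then must cancel the factor $p^{a}$. Since $W(k)$ need not be $p$-torsion-free for a general $\mathbb{Z}_{(p)}$-algebra $k$, the paper first proves the identity in the universal case $k=\mathbb{Z}_{(p)}$, $R=\mathbb{Z}_{(p)}[X]$, $x=X$ (invoking Theorem~\ref{structuretheorem} and the $p$-torsion-freeness of $W(\mathbb{Z}_{(p)})$), and then transfers to arbitrary $R$ by functoriality of the de Rham--Witt complex.

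Your approach sidesteps this entirely: by applying the Leibniz rule \eqref{dproducts} \emph{before} applying $F^{a}$, and then using \eqref{fdttdt} rather than \eqref{dfpfd}, you obtain $F^{a}(d([x]^{b}))=b\,[x]^{m+m'-1}d([x])$ with the $p$-adic unit $b$ as the sole coefficient to invert, so no spurious power of $p$ ever appears. This makes the argument work directly over any $\mathbb{Z}_{(p)}$-algebra, with no need for the universal case or the structure theorem. Your remark that the identities \eqref{frobwittvect}--\eqref{dproducts} hold over a general base (not just $k[\underline{X}]$) is correct and is also implicitly used by the paper in its functoriality step. In short: your argument is more elementary and more direct; the paper's has the virtue of isolating the torsion issue conceptually but pays for it with an extra reduction step.
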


\begin{proof}
One has $\left(m+m'\right)[x]^{m}d\left([x]^{m'}\right)=m'd\left([x]^{m+m'}\right)$, therefore using \eqref{dfpfd} we get:
\begin{equation*}
d\left([x]^{m+m'}\right)=d\left(F^{a}\left(\left[x\right]^{b}\right)\right)=p^{a}F^{a}\left(d\left(\left[x\right]^{b}\right)\right)\text{.}
\end{equation*}

In the case where $k=\mathbb{Z}_{(p)}$, $R=\mathbb{Z}_{(p)}[X]$ and $x=X$, this means we have shown that:
\begin{equation*}
p^{a}b[x]^{m}d\left([x]^{m'}\right)=p^{a}m'F^{a}\left(d\left([x]^{b}\right)\right)\text{.}
\end{equation*}

We can conclude in that case using theorem \ref{structuretheorem}, and the fact that $W(\mathbb{Z}_{(p)})$ is a $\mathbb{Z}_{(p)}$-algebra with no $p$-torsion. For the general case, using the canonical commutative diagram
\begin{equation*}
\begin{tikzcd}
\mathbb{Z}_{(p)}[X]\arrow{r}&R\\
\mathbb{Z}_{(p)}\arrow{u}\arrow{r}&k\arrow{u}
\end{tikzcd}
\end{equation*}
where the upper arrow sends $X$ to $x$, we conclude using the morphism of $W(\mathbb{Z}_{(p)})$-dgas $W\Omega_{\mathbb{Z}_{(p)}[X]/\mathbb{Z}_{(p)}}\to W\Omega_{R/k}$ obtained by functoriality.
\end{proof}

The next proposition gives a simple formula for products of values of $h$. The goal of the following lemmas will be to use it in order to get a formula for products of elements of the form \eqref{eelement}.

\begin{prop}\label{hhproduct}
Let $(a,I),(b,J)\in\mathcal{P}$ such that $a$ and $b$ take values in $\mathbb{N}$. We have:
\begin{equation*}
\exists m\in\mathbb{Z}_{(p)},\ h(a,I)h(b,J)=\left\{\begin{array}{ll}m\,h(a+b,I\cup J)&\text{if }I\cap J=\emptyset\\0&\text{otherwise.}\end{array}\right.
\end{equation*}
\end{prop}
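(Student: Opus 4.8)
The plan is to reduce everything to the universal case and then to a direct manipulation of Teichmüller symbols. First I would observe that $h(a,I)$ is a product over $i\in\Supp(a)$ of factors that are either Teichmüller lifts $[X_i]^{a_i}$ (for $i\notin I$) or terms $g(a|_{\{i\}})$ (for $i\in I$); since $a$ and $b$ take values in $\mathbb N$, each $g(a|_{\{i\}})$ unwinds via its definition to $F^{v_p(a_i)}(d([X_i]^{p^{-v_p(a_i)}a_i}))$, and by Lemma \ref{teichmullerlemma} (applied with $m=0$, $m'=a_i$) this equals, up to a unit in $\mathbb Z_{(p)}$, the expression $[X_i]^{a_i-1}d([X_i])$ — more precisely $a_i^{-1}\cdot(\text{something})$, but the point is that all these factors live in the subalgebra generated by the $[X_i]$ and $d([X_i])$. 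So $h(a,I)$ is, up to a rational unit, a product of distinct ``basic one-variable'' pieces, one per variable in $\Supp(a)$.

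Next I would multiply $h(a,I)$ by $h(b,J)$ and use the graded-commutativity of the wedge product together with the fact that the factors for different variables commute (they involve disjoint sets of Teichmüller symbols). For a variable $i$ lying in exactly one of $\Supp(a)$, $\Supp(b)$, the factor simply passes through. For a variable $i\in\Supp(a)\cap\Supp(b)$ there are two cases: if $i$ belongs to neither $I$ nor $J$, I get $[X_i]^{a_i}[X_i]^{b_i}=[X_i]^{a_i+b_i}$, which is exactly the factor appearing in $h(a+b,I\cup J)$ (note $i\notin I\cup J$); if $i$ is in exactly one of $I,J$, say $i\in I$, $i\notin J$, then I get $g(a|_{\{i\}})[X_i]^{b_i}$, and Lemma \ref{teichmullerlemma} rewrites $[X_i]^{b_i}d([X_i]^{a_i})$ (up to a $\mathbb Z_{(p)}$-unit) as a scalar times $g((a+b)|_{\{i\}})$, which is the factor in $h(a+b,I\cup J)$ at $i$ (now $i\in I\cup J$). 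Finally, if $i\in I\cap J$, the factor is $g(a|_{\{i\}})g(b|_{\{i\}})$, a product of two degree-one differentials in the same variable; by Lemma \ref{teichmullerlemma} each is a scalar multiple of $[X_i]^{*}d([X_i])$, and $([X_i]^{r}d([X_i]))([X_i]^{s}d([X_i]))=0$ because $d([X_i])\wedge d([X_i])=0$ in the de Rham–Witt complex. Collecting the scalars (all in $\mathbb Z_{(p)}$) into a single $m$, and collecting the signs from graded-commutativity into the same $m$, gives the claimed formula, with $m=0$ precisely when $I\cap J\neq\emptyset$.

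The one subtlety is to justify all these identities over an arbitrary $\mathbb Z_{(p)}$-algebra $k$, since the rewriting steps implicitly divided by integers like $a_i$. I would handle this exactly as in the proof of Lemma \ref{teichmullerlemma}: first establish the formula over the universal ring $\mathbb Z_{(p)}[X_1,\dots,X_n]$ (or, by further functoriality, over $\mathbb Z_{(p)}$ with the right number of variables), where $W(\mathbb Z_{(p)}[\underline X])$ is $p$-torsion-free and one can legitimately clear denominators and then cancel them using Theorem \ref{structuretheorem}; then transfer to general $k$ and general $x\in R$ via the morphism of $W(\mathbb Z_{(p)})$-dgas coming from the canonical square, noting that the scalar $m$ produced in the universal case is a fixed element of $\mathbb Z_{(p)}$ depending only on $a,b,I,J$ and so maps correctly.

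The main obstacle I anticipate is bookkeeping rather than conceptual: keeping track of the signs introduced when commuting the degree-one factors $g(\cdot)$ past one another to match the fixed ordering $\prec$ used in the definition of $h$, and making sure that the resulting sign is absorbed into the single constant $m$ (the statement only asserts existence of $m\in\mathbb Z_{(p)}$, so I do not need an explicit value, which is a relief). A secondary point requiring care is the case $u\neq 0$ hidden inside $g$ when $v_p(a_i)<0$ — but since $a,b$ take values in $\mathbb N$, every $a_i\in\mathbb N$, so $v_p(a_i)\geqslant 0$ and $u(a|_{\{i\}})=0$, which is why the hypothesis ``$a$ and $b$ take values in $\mathbb N$'' is exactly what makes the $g$-factors reduce cleanly to one-variable Teichmüller expressions.
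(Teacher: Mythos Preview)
Your proposal is correct and follows essentially the same path as the paper: expand each $g$-factor into a one-variable Teichm\"uller expression via \eqref{fdttdt}, multiply the two products variable by variable, and use Lemma~\ref{teichmullerlemma} to recombine the piece at each $i\in I\cup J$ into a scalar times $g((a+b)|_{\{i\}})$. The only divergence is your third paragraph's universality detour, which the paper does not need: the scalar produced by Lemma~\ref{teichmullerlemma} at the index $i$ is $\frac{p^{-\val_{p}(a_{i})}a_{i}}{p^{-\val_{p}(a_{i}+b_{i})}(a_{i}+b_{i})}$, a ratio of integers prime to $p$ and hence already a unit in $\mathbb{Z}_{(p)}$, so no torsion or division issues arise over an arbitrary $\mathbb{Z}_{(p)}$-algebra $k$.
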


\begin{proof}
First, we have by definition:
\begin{equation*}
h(a,I)h(b,J)=\prod_{i\in\Supp(a)\smallsetminus I}\left[X_{i}\right]^{a_{i}}\prod_{i'\in I}g\left(a|_{\{i'\}}\right)\prod_{j\in\Supp(b)\smallsetminus J}\left[X_{j}\right]^{b_{j}}\prod_{j'\in J}g\left(b|_{\{j'\}}\right)\text{.}
\end{equation*}

Since $W\Omega_{k[\underline{X}]/k}$ is alternating, this product is zero whenever $I\cap J\neq\emptyset$. Otherwise, we get:
\begin{multline*}
h(a,I)h(b,J)\\=\prod_{i\in\Supp(a+b)\smallsetminus(I\cup J)}\left[X_{i}\right]^{a_{i}+b_{i}}\prod_{i'\in I}\left[X_{i'}\right]^{b_{i'}}g\left(a|_{\{i'\}}\right)\prod_{j'\in J}\left[X_{j'}\right]^{a_{j'}}g\left(b|_{\{j'\}}\right)\text{.}
\end{multline*}

Moreover, for any $i'\in I$ we check that:
\begin{equation*}
\begin{split}
\left[X_{i'}\right]^{b_{i'}}g\left(a|_{\{i'\}}\right)&\overset{\eqref{fdttdt}}{=}\left[X_{i'}\right]^{\left(1-p^{-\val_{p}\left(a|_{\{i'\}}\right)}\right)a_{i'}+b_{i'}}d\left(\left[X_{i}\right]^{p^{-\val_{p}\left(a|_{\{i'\}}\right)}a_{i'}}\right)\\
&\overset{\hphantom{\eqref{fdttdt}}}{\overset{\ref{teichmullerlemma}}{=}}\frac{p^{-\val_{p}\left(a|_{\{i'\}}\right)}a_{i'}}{p^{-\val_{p}\left(a_{i'}+b_{i'}\right)}\left(a_{i'}+b_{i'}\right)}g\left((a+b)|_{\{i'\}}\right)\text{.}
\end{split}
\end{equation*}

Using the same argument, for any $j'\in J$ one successfully gets:
\begin{equation*}
\left[X_{j'}\right]^{a_{j'}}g\left(b|_{\{j'\}}\right)=\frac{p^{-\val_{p}\left(b|_{\{j'\}}\right)}b_{j'}}{p^{-\val_{p}\left(a_{j'}+b_{j'}\right)}\left(a_{j'}+b_{j'}\right)}g\left((a+b)|_{\{j'\}}\right)\text{.}
\end{equation*}
\end{proof}

\begin{lemm}\label{gashs}
Let $(a,I)\in\mathcal{P}$ such that $a$ takes values in $\mathbb{N}$. We have:
\begin{equation*}
g(a)=\sum_{j\in\Supp(a)}p^{\val_{p}(a_{j})-\val_{p}(a)}h(a,\{j\})\text{.}
\end{equation*}
\end{lemm}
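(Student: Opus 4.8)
The plan is to unwind the definition of $g(a)$ and repeatedly apply Lemma \ref{teichmullerlemma} to distribute the single differential $d$ over the Teichmüller monomial across all coordinates in $\Supp(a)$. Recall that $g(a) = F^{u(a)+\val_{p}(a)}\bigl(d\bigl(V^{u(a)}\bigl(\bigl[\underline{X}^{p^{-\val_{p}(a)}a}\bigr]\bigr)\bigr)\bigr)$ (note that for a weight function taking integer values, $\min(a)$ has $p$-adic valuation $\val_p(a)$, so $I_0 = \Supp(a)$ here and the restriction $a|_{I_0}$ is just $a$). Since $a$ is integer-valued, $\val_p(a) \geqslant 0$, hence $u(a) = 0$, and the formula simplifies to $g(a) = F^{\val_{p}(a)}\bigl(d\bigl(\bigl[\underline{X}^{p^{-\val_{p}(a)}a}\bigr]\bigr)\bigr)$. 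Writing $c \coloneqq p^{-\val_p(a)}a$, which is integer-valued with $\val_p(c) = 0$, I want to show $F^{\val_p(a)}(d([\underline{X}^{c}])) = \sum_{j \in \Supp(a)} p^{\val_p(a_j)-\val_p(a)} h(a,\{j\})$.

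First I would apply \eqref{dproducts} (Leibniz for a product of Witt vectors) to expand $d([\underline{X}^{c}]) = d\bigl(\prod_{i \in \Supp(a)} [X_i]^{c_i}\bigr) = \sum_{j \in \Supp(a)} \bigl(\prod_{i \neq j} [X_i]^{c_i}\bigr)\, d([X_j]^{c_j})$. Then, for each fixed $j$, I rewrite the $j$-th summand: pulling the Teichmüller factors together and using Lemma \ref{teichmullerlemma} with $x = X_j$, $m = 0$, $m' = c_j$ (so $a$ of the lemma is $\val_p(c_j)$ and $b = p^{-\val_p(c_j)}c_j$), I get $[X_j]^{c_j - p^{\val_p(c_j)}\cdot p^{-\val_p(c_j)}c_j}\cdots$ — more directly, $d([X_j]^{c_j}) = p^{\val_p(c_j)} F^{\val_p(c_j)}(d([X_j]^{p^{-\val_p(c_j)}c_j}))$ by \eqref{dfpfd}, but it is cleaner to keep everything in $h$-form. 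The point is that $\bigl(\prod_{i\neq j}[X_i]^{c_i}\bigr) d([X_j]^{c_j})$ is, up to the scalar $p^{\val_p(c_j)}$ coming out of the differential, essentially $h(c, \{j\})$ after absorbing the Teichmüller prefactor on $X_j$ into the definition of $g(c|_{\{j\}})$ via \eqref{fdttdt}. Then I apply $F^{\val_p(a)}$ to the whole sum; using Proposition \ref{factionone} termwise (each term being of the form $e(1, \cdot, \{j\})$ with the weight function $c|_{\{j\}}$ having $p$-adic valuation $\val_p(c_j) \geqslant 0$, so the third case applies and $F^{\val_p(a)}$ just multiplies the weight function by $p^{\val_p(a)}$, turning $c$ back into $a$), each summand becomes a scalar multiple of $h(a, \{j\})$. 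Collecting the scalars — the $p^{\val_p(c_j)}$ from $d$, times the $\frac{m'}{b}$-type rational factor from Lemma \ref{teichmullerlemma}, times the effect of $F^{\val_p(a)}$ on the residual differential via \eqref{dfpfd} — should yield exactly $p^{\val_p(a_j) - \val_p(a)}$, since $\val_p(c_j) = \val_p(a_j) - \val_p(a)$.

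The main obstacle I expect is bookkeeping the scalar in front of $h(a,\{j\})$ correctly: there are several competing powers of $p$ (one from $d$ acting on $[X_j]^{c_j}$, one from reindexing via Lemma \ref{teichmullerlemma} to land on the reduced exponent appearing in $g$, and one from commuting $F^{\val_p(a)}$ past $d$ via \eqref{dfpfd}), and it is easy to be off by a sign of the exponent or to double-count the normalization already built into $g(a|_{\{j\}})$. To control this I would carefully track, for each $j$, the exponent $e_j \coloneqq p^{-\val_p(a_j)} a_j$ that appears inside $g(a|_{\{j\}}) = F^{\val_p(a_j)}(d([X_j]^{e_j}))$, and verify that the chain $[X_j]^{c_j}\text{-differential} \to F^{\val_p(a)}(\cdots) \to$ an $e(\cdot)$-term matches $g(a|_{\{j\}})$ times precisely $p^{\val_p(a_j)-\val_p(a)}$; the identity $\val_p(c_j) + \val_p(a) = \val_p(a_j)$ is the arithmetic fact that makes everything cancel. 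An alternative, possibly cleaner route is to prove the identity first in the universal case $k = \mathbb{Z}_{(p)}$, $k[\underline X] = \mathbb{Z}_{(p)}[X_1,\ldots,X_n]$ where there is no $p$-torsion (so one may multiply through by a suitable power of $p$ and check equality after that, invoking Theorem \ref{structuretheorem}), and then deduce the general case by functoriality of the de Rham-Witt complex along $\mathbb{Z}_{(p)}[\underline X] \to k[\underline X]$, exactly as in the proof of Lemma \ref{teichmullerlemma}.
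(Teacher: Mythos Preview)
Your proposal is correct and follows essentially the same route as the paper: expand the differential of the Teichm\"uller monomial via the Leibniz rule \eqref{dproducts}, then adjust each summand with the Frobenius identities \eqref{frobwittvect} and \eqref{dfpfd} to land on $p^{\val_p(a_j)-\val_p(a)}h(a,\{j\})$. One small correction: the paper never invokes Lemma~\ref{teichmullerlemma} or Proposition~\ref{factionone} here (it uses \eqref{fdttdt} to remove and then reinsert the outer $F^{\val_p(a)}$), and your appeal to Proposition~\ref{factionone} is slightly off since the summands are $h(c,\{j\})$ rather than $e(1,c|_{\{j\}},\{j\})$---but this is harmless, as you can simply use that $F$ is a ring morphism together with \eqref{frobwittvect} and \eqref{dfpfd} directly.
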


\begin{proof}
Write $S\coloneqq\Supp(a)$ for simplicity. We compute:
\begin{multline*}
F^{\val_{p}(a)}\left(d\left(\left[\underline{X}^{p^{-\val_{p}(a)}a}\right]\right)\right)\\\begin{aligned}
&\overset{\eqref{fdttdt}}{=}\left[\underline{X}^{\left(1-p^{-\val_{p}(a)}\right)a}\right]d\left(\left[\underline{X}^{p^{-\val_{p}(a)}a}\right]\right)\\
&\overset{\eqref{dproducts}}{=}\left[\underline{X}^{\left(1-p^{-\val_{p}(a)}\right)a}\right]\sum_{j\in S}\left(\prod_{j'\in S\smallsetminus\{j\}}\left[{X_{j'}}^{p^{-\val_{p}(a)}a_{j'}}\right]\right)d\left(\left[{X_{j}}^{p^{-\val_{p}(a)}a_{j}}\right]\right)\\
&\overset{\eqref{fdttdt}}{=}\sum_{j\in S}\left(\prod_{j'\in S\smallsetminus\{j\}}\left[{X_{j'}}^{a_{j'}}\right]\right)F^{\val_{p}(a)}\left(d\left(\left[{X_{j}}^{p^{-\val_{p}(a)}a_{j}}\right]\right)\right)\\
&\overset{\eqref{dfpfd}}{=}\sum_{j\in S}\left(\prod_{j'\in S\smallsetminus\{j\}}\left[{X_{j'}}^{a_{j'}}\right]\right)p^{\val_{p}(a_{j})-\val_{p}(a)}F^{\val_{p}(a_{j})}\left(d\left(\left[{X_{j}}^{p^{-\val_{p}(a_{j})}a_{j}}\right]\right)\right)\\
&\overset{\hphantom{\eqref{dfpfd}}}{=}\sum_{j\in S}p^{\val_{p}(a_{j})-\val_{p}(a)}h(a,\{j\})\text{.}\end{aligned}
\end{multline*}

This ends the proof because by definition $g(a)=F^{\val_{p}(a)}\left(d\left(\left[\underline{X}^{p^{-\val_{p}(a)}a}\right]\right)\right)$. 
\end{proof}

The next lemma will be used so we can write any value of the function $h$ defined above as a linear combination of elements of the form \eqref{eelement}. The previous lemma can be seen as a kind of reciprocal.

\begin{lemm}\label{hases}
Let $(a,I)\in\mathcal{P}$ such that $a$ takes values in $\mathbb{N}$. Denote by $P$ the set of partitions of $\Supp(a)$ of size $\#I$. Then there exists a function $s\colon P\to\mathbb{N}\subset W(k)$ such that:
\begin{equation*}
h(a,I)=\sum_{J\in P}e(s(J),a,J)\text{.}
\end{equation*}
\end{lemm}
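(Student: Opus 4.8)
The statement asserts that $h(a,I)$ can be written as a $\mathbb{N}$-linear combination of elements $e(s(J),a,J)$ where $J$ ranges over partitions of $\Supp(a)$ of the same size $\#I$. The natural strategy is induction on the size $m\coloneqq\#I$. In the base case $m=0$, both $I=\emptyset$ and the only partition of size $0$ is $\emptyset$; here $h(a,\emptyset)=\prod_{i\in\Supp(a)}[X_i]^{a_i}=[\underline{X}^a]$, and since $a$ takes values in $\mathbb{N}$ we have $u(a)=0$ and $\val_p(a)\geqslant 0$, so $e(1,a,\emptyset)=[\underline{X}^{a}]$ as well (the ``otherwise'' branch of \eqref{eelement} with $I_0=\Supp(a)$). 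Thus $s(\emptyset)=1$ works.

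For the inductive step, I would isolate the last index of $I$ with respect to $\preceq$. Write $I=I'\cup\{i_m\}$ with $i_m$ the $\preceq$-largest element of $I$, so that $\#I'=m-1$. By definition $h(a,I)=h(a,I')\cdot g(a|_{\{i_m\}})$ up to reordering the commuting Teichmüller factors — more precisely, I would first check that $h(a,I)$ equals $h(a|_{J_0\cup\{i_m\}\cup\cdots},\ldots)$ assembled from the pieces; the cleanest route is to use Lemma \ref{gashs} in reverse together with Proposition \ref{hhproduct}. Concretely: apply the induction hypothesis to $h(a,I')$ to get $h(a,I')=\sum_{K}e(s'(K),a,K)$ over partitions $K$ of $\Supp(a)$ of size $m-1$; then multiply by $g(a|_{\{i_m\}})$. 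Now $g(a|_{\{i_m\}})=h(a|_{\{i_m\}},\{i_m\})$ since $\Supp(a|_{\{i_m\}})=\{i_m\}$ and the sum in Lemma \ref{gashs} has a single term with coefficient $p^{0}=1$. So $h(a,I)=h(a,I')\cdot h(a|_{\{i_m\}},\{i_m\})$, and Proposition \ref{hhproduct} gives $h(a,I')\cdot h(a|_{\{i_m\}},\{i_m\})=m_0\,h(a,I'\cup\{i_m\})=m_0\,h(a,I)$ for some $m_0\in\mathbb{Z}_{(p)}$ — wait, that is circular, so instead I must go the other direction: express $h(a,I)$ via the $e$'s by expanding $h(a,I')$ inductively and separately expanding $g(a|_{\{i_m\}})$ via $e$'s, then use the product formula for $e$-elements (Proposition \ref{hhproduct} applied to each pair, or rather the yet-to-be-proven product formula for $e$'s).

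The cleaner path, which I would actually follow, avoids circularity by doing the induction purely on the $h$-side combined with a single application of the structure theorem. Specifically: $h(a,I)$ is by construction a product of $\#I$ factors of the form $g(a|_{\{j\}})$ times Teichmüller factors, hence it lies in $W\Omega^{\#I}_{k[\underline{X}]/k}$. By Theorem \ref{structuretheorem} it has a unique expansion $h(a,I)=\sum_{(b,J)}e(\eta_{b,J},b,J)$. I would then argue that only terms with $b=a$ and $\#J=\#I$ can appear, for two reasons: first, a weight/homogeneity argument — every $g(a|_{\{j\}})$ and every $[X_i]^{a_i}$ is ``homogeneous of weight $a|_{\{j\}}$ resp. $a|_{\{i\}}$'' in the grading by $\mathbb{N}[\frac1p]^n$ that makes the $e(\eta,b,J)$ homogeneous of weight $b$, forcing $b=a$; second, the element $h(a,I)$ lives in cohomological degree exactly $\#I$ (the $g$-factors each contribute degree $1$, the Teichmüller factors degree $0$), and $e(\eta,a,J)$ lives in degree $\#J$ (or $\#J-1$ when $I_0=\emptyset$ and $u(a)\neq 0$; but here $u(a)=0$ since $a\in\mathbb{N}^n$, so degree is exactly $\#J$), forcing $\#J=\#I$. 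Finally I would check $\eta_{a,J}\in\mathbb{N}\subset W(k)$: since $u(a)=0$, each $e(\eta,a,J)$ has leading factor $\eta[\underline{X}^{a|_{J_0}}]$, and tracking coefficients through Lemma \ref{teichmullerlemma} and Lemma \ref{gashs} shows every coefficient produced is a product of ratios of positive integers that is in fact a nonnegative integer (the $p$-adic denominators cancel because $a\in\mathbb{N}^n$ makes all relevant $\val_p$ nonnegative and the combinatorics of the $\preceq$-order ensures integrality). Set $s(J)\coloneqq\eta_{a,J}$.

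The main obstacle is the last point: verifying that the coefficients $s(J)$ are genuinely nonnegative \emph{integers} rather than merely elements of $\mathbb{Z}_{(p)}$. The ratios appearing in Proposition \ref{hhproduct} and Lemma \ref{gashs}, e.g. $p^{-\val_p(a_{i'})}a_{i'}\big/\big(p^{-\val_p(a_{i'}+b_{i'})}(a_{i'}+b_{i'})\big)$, are a priori only in $\mathbb{Z}_{(p)}$, and one must combine several of them and reorganize via the change of basis between the $h$'s and the $e$'s to see integrality; I expect this to require a careful bookkeeping argument exploiting that when we reassemble $g(a)=\sum_j p^{\val_p(a_j)-\val_p(a)}h(a,\{j\})$ the total is an honest de Rham--Witt element, or alternatively a direct argument that $h(a,I)$ is an $\mathbb{N}$-linear combination of $e$'s by realizing it inside $W\Omega$ of $\mathbb{Z}_{(p)}[\underline X]$ and invoking the $p$-torsion-freeness of $W(\mathbb{Z}_{(p)})$ as in the proof of Lemma \ref{teichmullerlemma}.
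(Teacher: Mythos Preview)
Your proposal has a real gap, and it originates precisely where you abandon your first idea. The factorization $h(a,I)=h(a,I')\cdot g(a|_{\{i_m\}})$ is simply false: since $i_m\in\Supp(a)\smallsetminus I'$, the element $h(a,I')$ already carries the Teichm\"uller factor $[X_{i_m}]^{a_{i_m}}$, so multiplying by $g(a|_{\{i_m\}})$ produces something of weight $a+a|_{\{i_m\}}$, not $a$. This is not circularity; it is the wrong decomposition, and you discard the inductive strategy for the wrong reason.

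The paper repairs this by peeling off the entire top block $I_m$ rather than the single index $i_m$. One writes
\[
h(a,I)=h\bigl(a|_{\Supp(a)\smallsetminus I_m},\,I\smallsetminus\{i_m\}\bigr)\times\Bigl(\prod_{i\in I_m\smallsetminus\{i_m\}}[X_i]^{a_i}\Bigr)\times g\bigl(a|_{\{i_m\}}\bigr),
\]
and then establishes, via \eqref{frobwittvect}, \eqref{dbasicprop} and \eqref{dfpfd}, the key identity
\[
\Bigl(\prod_{i\in I_m\smallsetminus\{i_m\}}[X_i]^{a_i}\Bigr)\,g\bigl(a|_{\{i_m\}}\bigr)
= g\bigl(a|_{I_m}\bigr)-p^{\,\val_p(a|_{I_m\smallsetminus\{i_m\}})-\val_p(a_{i_m})}\,[X_{i_m}]^{a_{i_m}}\,g\bigl(a|_{I_m\smallsetminus\{i_m\}}\bigr),
\]
the exponent of $p$ being nonnegative because $i_m$ is $\preceq$-minimal in $I_m$. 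Each of the two resulting terms has the shape $h(a',I\smallsetminus\{i_m\})\cdot g(\text{top block})$; the induction hypothesis applies to the $h$-factor, and multiplying an $e(1,a',J')$ by the corresponding $g$-factor yields $e(1,a,J'\cup\{\cdot\})$ on the nose. This produces the expansion with visibly integer coefficients. (The minus sign shows that $\mathbb{Z}$, not $\mathbb{N}$, is what one actually gets: already for $n=2$, $a=(1,1)$ one has $h(a,\{1\})=e(1,a,\{1\})-e(1,a,\{2\})$. This is harmless for the sequel, which only needs $\mathbb{Z}_{(p)}$.)

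Your ``cleaner path'' via Theorem~\ref{structuretheorem} together with the weight and cohomological-degree arguments is correct as far as it goes: it does show $h(a,I)=\sum_{J}e(\eta_J,a,J)$ with $\eta_J\in W(k)$ and $\#J=\#I$. But it gives no control on the $\eta_J$ beyond membership in $W(k)$, and your suggested remedies do not close this gap. Passing to $k=\mathbb{Z}_{(p)}$ only yields $\eta_J\in W(\mathbb{Z}_{(p)})$, not $\eta_J\in\mathbb{Z}$; and Lemma~\ref{gashs} and Proposition~\ref{hhproduct} run in the wrong direction (they expand $g$'s and products of $h$'s in terms of $h$'s, whereas you need the inverse change of basis from $h$'s to $e$'s). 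Computing that inverse explicitly leads you straight back to the identity above, which is the substance of the paper's argument.
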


\begin{proof}
Put $m\coloneqq\#I$. If $m=0$, then obviously $h(a,I)=e(1,a,I)$. So suppose that $m\neq0$. Write $\{i_{l}\}_{l\in\llbracket1,m\rrbracket}\coloneqq I$, with $i_{j}\prec i_{j'}$ for any pair $j<j'$ in $\llbracket1,m\rrbracket$, and for all $j\in\Supp(a)$ put $v_{j}\coloneqq\val_{p}(a_{j})$ and $b_{j}=p^{-v_{j}}a_{j}$. By definition:
\begin{equation*}
h(a,I)=\prod_{i\in\Supp(a)\smallsetminus I}\left[X_{i}\right]^{a_{i}}\prod_{j\in I}F^{v_{j}}\left(d\left(\left[X_{j}\right]^{b_{j}}\right)\right)\text{.}
\end{equation*}

So we can write:
\begin{equation*}
h(a,I)=h\left(a|_{\Supp(a)\smallsetminus I_{m}},I\smallsetminus\{i_{m}\}\right)\times\left(\prod_{i\in I_{m}\smallsetminus\{i_{m}\}}\left[X_{i}\right]^{a_{i}}\right)\times F^{v_{i_{m}}}\left(d\left(\left[X_{i_{m}}\right]^{b_{i_{m}}}\right)\right)\text{.}
\end{equation*}

Moreover we can compute:
\begin{multline*}
\left(\prod_{i\in I_{m}\smallsetminus\{i_{m}\}}\right.\left.\vphantom{\prod_{i\in I_{m}\smallsetminus\{i_{m}\}}}\left[X_{i}\right]^{a_{i}}\right)\times F^{v_{i_{m}}}\left(d\left(\left[X_{i_{m}}\right]^{b_{i_{m}}}\right)\right)\\\begin{aligned}
&\overset{\eqref{frobwittvect}}{=}F^{v_{i_{m}}}\left(d\left(\left[X_{i_{m}}\right]^{b_{i_{m}}}\right)\prod_{i\in I_{m}\smallsetminus\{i_{m}\}}\left[X_{i}\right]^{p^{-v_{i_{m}}}a_{i}}\right)\\
&\overset{\eqref{dbasicprop}}{=}F^{v_{i_{m}}}\left(d\left(\prod_{i\in I_{m}}\left[X_{i}\right]^{p^{-v_{i_{m}}}a_{i}}\right)-\left[X_{i_{m}}\right]^{b_{i_{m}}}d\left(\prod_{i\in I_{m}\smallsetminus\{i_{m}\}}\left[X_{i}\right]^{p^{-v_{i_{m}}}a_{i}}\right)\right)\\
&\overset{\eqref{dfpfd}}{=}g\left(a|_{I_{m}}\right)-F^{v_{i_{m}}}\left(\left[X_{i_{m}}\right]^{b_{i_{m}}}\right)\times p^{\val_{p}\left(a|_{I_{m}\smallsetminus\{i_{m}\}}\right)-v_{i_{m}}}g\left(a|_{I_{m}\smallsetminus\{i_{m}\}}\right)\\
&\overset{\eqref{frobwittvect}}{=}g\left(a|_{I_{m}}\right)-p^{\val_{p}\left(a|_{I_{m}\smallsetminus\{i_{m}\}}\right)-v_{i_{m}}}\left[X_{i_{m}}\right]^{a_{i_{m}}}g\left(a|_{I_{m}\smallsetminus\{i_{m}\}}\right)\text{.}\end{aligned}
\end{multline*}

So we get:
\begin{multline*}
h(a,I)=h\left(a|_{\Supp(a)\smallsetminus I_{m}},I\smallsetminus\{i_{m}\}\right)\times g\left(a|_{I_{m}}\right)\\-p^{\val_{p}\left(a|_{I_{m}\smallsetminus\{i_{m}\}}\right)-v_{i_{m}}}h\left(a|_{\{i_{m}\}\cup\Supp(a)\smallsetminus I_{m}},I\smallsetminus\{i_{m}\}\right)\times g\left(a|_{I_{m}\smallsetminus\{i_{m}\}}\right)\text{.}
\end{multline*}

We can then deduce the lemma by induction on $m=\#I$. Indeed, if we suppose that $h\left(a|_{\Supp(a)\smallsetminus I_{m}},I\smallsetminus\{i_{m}\}\right)$ can be written as a linear combination of elements of the form $e(1,a|_{\Supp(a)\smallsetminus I_{m}},J')$, where $J'$ is a partition of $\Supp(a)\smallsetminus I_{m}$ of size $m-1$, then since $e(1,a|_{\Supp(a)\smallsetminus I_{m}},J')\times g\left(a|_{I_{m}}\right)=e(1,a,J'\cup\{i_{m}\})$ the lemma is proven for the minuend of the above subtraction, and one can conclude for the subtrahend by using the same reasoning.
\end{proof}

\begin{lemm}\label{eeproductint}
Let $(a,I),(b,J)\in\mathcal{P}$ such that $a$ and $b$ take values in $\mathbb{N}$. Let $\eta,\eta'\in W(k)$. Denote by $P$ the set of partitions of $\Supp(a+b)$ of size $\#I+\#J$, then there exists a function $s\colon P\to\mathbb{Z}_{(p)}$ such that:
\begin{equation*}
e(\eta,a,I)e(\eta',b,J)=\sum_{L\in P}e(s(L)\eta\eta',a+b,L)\text{.}
\end{equation*}
\end{lemm}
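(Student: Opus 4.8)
The plan is to reduce the computation of $e(\eta,a,I)e(\eta',b,J)$ to the already-established product formula for the $h$-elements in Proposition~\ref{hhproduct}, using Lemma~\ref{hases} and Lemma~\ref{gashs} as the bridge between the two families of basic elements. First I would observe that it suffices to handle the case $\eta=\eta'=1$: once we know $e(1,a,I)e(1,b,J)=\sum_{L\in P}e(s(L),a+b,L)$ with $s(L)\in\mathbb{Z}_{(p)}$, the general case follows by multiplying through by $\eta\eta'\in W(k)$, since $\eta$ commutes past the Teichmüller and $V$-terms and $e(\eta\eta',a+b,L)$ is by definition $\eta\eta'$ times $e(1,a+b,L)$ (the coefficient $\eta$ in \eqref{eelement} sits inside the outermost $d\circ V^{u}$ or $V^{u}$, and $a,b$ taking values in $\mathbb{N}$ forces $u(a)=u(b)=u(a+b)=0$, so $e(1,a,I)=h(a,I)$ on the nose and the $\eta$'s genuinely factor out). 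This last point is worth stating explicitly, because it is precisely the integrality hypothesis on $a$ and $b$ that collapses the case distinction in \eqref{eelement} and makes $e(1,a,I)=h(a,I)$.

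Granting that, the core of the argument is: $e(1,a,I)e(1,b,J)=h(a,I)h(b,J)$, and by Proposition~\ref{hhproduct} this is either $0$ (when $I\cap J\neq\emptyset$, in which case the right-hand side should also be interpreted as the empty sum, or one checks directly that $P=\emptyset$ or all terms vanish by the alternating property — I would note that if $I\cap J\neq\emptyset$ then any $L$ of size $\#I+\#J$ still makes sense but the product itself is $0$, so $s\equiv0$ works) or equal to $m\,h(a+b,I\cup J)$ for some $m\in\mathbb{Z}_{(p)}$. In the nonzero case $I\cup J$ is a partition of $a+b$ of size $\#I+\#J$, so applying Lemma~\ref{hases} to $h(a+b,I\cup J)$ writes it as $\sum_{L\in P}e(s_0(L),a+b,L)$ with $s_0(L)\in\mathbb{N}\subset W(k)$; multiplying by $m$ and absorbing it gives the claim with $s(L)=m\,s_0(L)\in\mathbb{Z}_{(p)}$. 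Here one must be slightly careful that $s_0(L)m$ still lands in $\mathbb{Z}_{(p)}\subset W(k)$ rather than just $W(k)$: this is fine since $m\in\mathbb{Z}_{(p)}$ and $s_0(L)\in\mathbb{N}$, and the natural map $\mathbb{Z}_{(p)}\to W(\mathbb{Z}_{(p)})\to W(k)$ is a ring homomorphism, so the product is the image of an element of $\mathbb{Z}_{(p)}$.

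The main obstacle I anticipate is purely bookkeeping rather than conceptual: matching up the index set $P$ of partitions of $\Supp(a+b)$ of size $\#I+\#J$ in the statement with the index set arising from Lemma~\ref{hases} (which produces partitions of $\Supp(a+b)$ of the same size), and handling the degenerate cases cleanly — namely $I\cap J\neq\emptyset$ (product is zero, take $s\equiv 0$), $\#I=\#J=0$ (then $h(a,I)h(b,J)=[\underline{X}^a][\underline{X}^b]=[\underline{X}^{a+b}]=e(1,a+b,\emptyset)$, directly), and the possibility that $\Supp(a)$ and $\Supp(b)$ overlap outside $I\cup J$, where one uses $[X_i]^{a_i}[X_i]^{b_i}=[X_i]^{a_i+b_i}$ so that no spurious $g$-terms are created. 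I would also double-check that $\Supp(a+b)=\Supp(a)\cup\Supp(b)$ holds (true since $a,b\geqslant 0$, so there is no cancellation), which is what guarantees $I\cup J\subset\Supp(a+b)$ and that $P$ is the correct target. Once these edge cases are dispatched, the proof is a two-line invocation of Proposition~\ref{hhproduct} followed by Lemma~\ref{hases}.
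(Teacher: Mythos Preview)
Your overall strategy --- pass to $h$-elements, multiply via Proposition~\ref{hhproduct}, then convert back with Lemma~\ref{hases} --- is exactly the paper's, but your shortcut contains a genuine error: the claim that $e(1,a,I)=h(a,I)$ when $u(a)=0$ is false in general. The point is that $e(1,a,I)=[\underline{X}^{a|_{I_0}}]\prod_{l=1}^{\#I}g(a|_{I_l})$, where each $I_l$ is a \emph{block} of $\Supp(a)$ (all elements $i$ with $i_l\preceq i\prec i_{l+1}$), whereas $h(a,I)=\prod_{i\notin I}[X_i]^{a_i}\prod_{j\in I}g(a|_{\{j\}})$ uses only the singletons $\{j\}$ for $j\in I$. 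These coincide only when every block $I_l$ is a singleton, i.e.\ when $I=\Supp(a)$. For a concrete counterexample take $n=2$, $a=(1,1)$, $I=\{1\}$: then $I_0=\emptyset$, $I_1=\{1,2\}$, so $e(1,a,I)=g(a)=d([X_1X_2])=[X_2]d[X_1]+[X_1]d[X_2]$, while $h(a,I)=[X_2]\,g(a|_{\{1\}})=[X_2]d[X_1]$.

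The fix is precisely the step you tried to skip: use Lemma~\ref{gashs} to expand each $g(a|_{I_l})$ as a $\mathbb{Z}_{(p)}$-linear combination of elements $h(a|_{I_l},\{j\})$ with $j\in I_l$, so that $e(1,a,I)$ becomes a $\mathbb{Z}_{(p)}$-linear combination of products $h(a|_{I_0},\emptyset)\prod_{l}h(a|_{I_l},\{j_l\})$. Each such product is, by repeated use of Proposition~\ref{hhproduct}, a $\mathbb{Z}_{(p)}$-multiple of a single $h(a,K)$ with $\#K=\#I$. Doing the same for $e(1,b,J)$ and multiplying, Proposition~\ref{hhproduct} again yields $\mathbb{Z}_{(p)}$-multiples of $h(a+b,L')$ with $\#L'=\#I+\#J$, and then Lemma~\ref{hases} finishes as you described. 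Your remarks about the degenerate cases and about $\Supp(a+b)=\Supp(a)\cup\Supp(b)$ are correct and are needed here too.
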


\begin{proof}
By definition we have $e(\eta,a,I)=\eta\left[\underline{X}^{a|_{I_{0}}}\right]\prod_{i=1}^{\#I}g(a|_{I_{i}})$. There is also a similar equation defining $e(\eta',b,J)$. We then deduce from lemma \ref{gashs} that $e(\eta,a,I)$ is a linear combination of products of elements of the form $\eta h(a|_{I_{0}},\emptyset)\prod_{i=1}^{\#I}h(a|_{I_{i}},\{j_{i}\})$, where all $j_{i}\in I_{i}$ for any $i\in\llbracket1,\#I\rrbracket$. We can conclude by using proposition \ref{hhproduct} and lemma \ref{hases}.
\end{proof}

\begin{lemm}\label{protolemma}
Let $(a,I),(b,J)\in\mathcal{P}$ such that $u(a)\geqslant u(b)$ and $I_{0}\neq\emptyset$. Denote by $P$ the set of partitions of size $\#I+\#J$ of $\Supp(a+b)$, and put:
\begin{equation*}
v\coloneqq\left\{\begin{array}{ll}u(b)&\text{if }J_{0}\neq\emptyset\text{,}\\0&\text{otherwise.}\end{array}\right.
\end{equation*}

Then for any $\eta,\eta'\in W(k)$ there exists a function $s\colon P\to\mathbb{Z}_{(p)}$ such that:
\begin{gather*}
\forall L\in P,\ \left\{\begin{array}{ll}p^{v+u(a+b)}\mid s(L)&\text{if }L_{0}=\emptyset\text{,}\\p^{v}\mid s(L')&\text{otherwise,}\end{array}\right.\\
e(\eta,a,I)e(\eta',b,J)=\sum_{L\in P}e\left(s(L)V^{u(a)-u(a+b)}\left(\eta F^{u(a)-u(b)}\left(\eta'\right)\right),a+b,L\right)\text{.}
\end{gather*}
\end{lemm}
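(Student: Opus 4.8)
The plan is to reduce the general statement to the integral case treated in Lemma~\ref{eeproductint} by applying the Frobenius and Verschiebung operators computed in Propositions~\ref{factionone} and \ref{vactionone}. Since $I_0\neq\emptyset$, we are in the ``otherwise'' branch of \eqref{eelement}, so $e(\eta,a,I)=V^{u(a)}\!\left(\eta\left[\underline{X}^{p^{u(a)}a|_{I_0}}\right]\right)\prod_{l=1}^{\#I}g(a|_{I_l})$, and similarly for $e(\eta',b,J)$; the point is that $p^{u(a)}a$ and $p^{u(b)}b$ take values in $\mathbb{N}$. First I would rewrite each $g(a|_{I_l})$ using Proposition~\ref{factionone} (resp. \ref{vactionone}) to ``clear denominators'': applying $F^{u(a)}$ to $e(1,a,\{j\})$-type elements turns the weight function $a|_{I_l}$ into $p^{u(a)}a|_{I_l}$, which is integral. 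Concretely, $e(\eta,a,I)$ should be expressible as $V^{u(a)}$ of an element $e(\eta\cdot(\text{unit}),p^{u(a)}a,I')$ with integral weight function, up to bookkeeping of the powers of $p$ that Propositions~\ref{factionone} and \ref{vactionone} introduce each time one passes an $F$ or $V$ across a $g$ factor whose associated valuation is nonpositive.

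The key computational input is the projection formula \eqref{vxfyvxy}, $V(xF(y))=V(x)y$, which lets me combine $V^{u(a)}(\cdots)$ with $V^{u(b)}(\cdots)$: writing $u(a)\geqslant u(b)$, I would pull $F^{u(a)-u(b)}$ onto the second factor so that both live under a common $V^{u(b)}$, then pull out $V^{u(b)}$ of a product of two integral-weight $e$-elements whose weight functions are $p^{u(b)}a$ and $p^{u(b)}b$. Now I apply Lemma~\ref{eeproductint} to that integral product: it gives $\sum_L e(s(L)\,\eta_1\eta_1',\,p^{u(b)}(a+b),\,L)$ with $s(L)\in\mathbb{Z}_{(p)}$ and $\eta_1,\eta_1'$ the modified coefficients. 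Finally I push $V^{u(b)}$ back inside using Proposition~\ref{vactionone} repeatedly, which converts $p^{u(b)}(a+b)$ back down to $a+b$ and, crucially, accounts for the remaining power $p^{u(a+b)}$ (and the factor $p^v$ coming from whether $J_0=\emptyset$): each step where $\val_p\leqslant0$ and the partition misses the minimum multiplies the coefficient by $p$, whereas on the summands $L$ with $L_0\neq\emptyset$ one only collects the $p^v$ contributed by the second factor when $J_0\neq\emptyset$ (otherwise $v=0$). Keeping track of $u(a)-u(a+b)$ versus the $V$'s that get absorbed is exactly what produces the stated coefficient $s(L)V^{u(a)-u(a+b)}\!\left(\eta F^{u(a)-u(b)}(\eta')\right)$.

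The main obstacle I expect is the divisibility bookkeeping: proving $p^{v+u(a+b)}\mid s(L)$ when $L_0=\emptyset$ and $p^v\mid s(L)$ otherwise. This requires carefully matching, term by term, the powers of $p$ that Propositions~\ref{factionone} and \ref{vactionone} insert — which depend on the signs of the various $\val_p\!\left((a+b)|_{L_l}\right)$ and on whether each intermediate $I_0$-type set is empty — against the powers absorbed when reassembling $V^{u(b)}$ of the integral product. A secondary subtlety is that Lemma~\ref{eeproductint}'s partition set $P$ is indexed by partitions of $\Supp(p^{u(b)}(a+b))=\Supp(a+b)$, so the index sets match up, but one must check that the $e$-elements appearing after reapplying $V^{u(b)}$ genuinely have the form $e(-,a+b,L)$ with $L_0$ possibly empty or not, and that no spurious cancellation or vanishing occurs — this is where the hypothesis $I_0\neq\emptyset$ (so that the first factor is a ``plain'' $V^{u(a)}$, not a $d$ of a $V^{u(a)}$) is used to stay in the uniform ``otherwise'' branch of \eqref{eelement} throughout. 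I would organize the proof as: (1) rewrite both factors in integral form under a common $V^{u(b)}$ via \eqref{vxfyvxy}, (2) invoke Lemma~\ref{eeproductint}, (3) descend via Proposition~\ref{vactionone} while tracking $p$-powers, and (4) verify the two divisibility claims by comparing the two counts.
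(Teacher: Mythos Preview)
Your strategy is the same as the paper's: use the projection formula \eqref{vxfyvxy} together with Proposition~\ref{factionone} to make both weight functions integral, invoke Lemma~\ref{eeproductint}, and then descend via Proposition~\ref{vactionone}. But there is a concrete slip in the execution that would block the argument as you have written it.

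You propose to place both factors under a common $V^{u(b)}$, with integral weight functions $p^{u(b)}a$ and $p^{u(b)}b$. Since $u(a)\geqslant u(b)$, the weight $p^{u(b)}a$ is \emph{not} integral in general: its $p$-adic valuation is $u(b)+\val_p(a)=u(b)-u(a)\leqslant 0$ whenever $u(a)>0$. Lemma~\ref{eeproductint} therefore does not apply. The correct move (and what the paper does) is to go under $V^{u(a)}$. Because $I_0\neq\emptyset$, the first factor is literally $V^{u(a)}\!\left(\eta[\underline{X}^{p^{u(a)}a|_{I_0}}]\right)$ times a product of $g$'s, and \eqref{vxfyvxy} gives directly
\[
e(\eta,a,I)\,e(\eta',b,J)=V^{u(a)}\!\left(\eta\left[\underline{X}^{p^{u(a)}a|_{I_0}}\right]F^{u(a)}\!\bigl(e(1,a|_{\tilde I},I)\,e(\eta',b,J)\bigr)\right),
\]
with $\tilde I=\Supp(a)\smallsetminus I_0$. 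Applying Proposition~\ref{factionone} $u(a)$ times to the inside yields a product of $e$-elements with integral weights $p^{u(a)}a$ and $p^{u(a)}b$; the $u(b)$ Frobenius steps on the second factor in which $\val_p<0$ are exactly what produce $p^{v}F^{u(a)-u(b)}(\eta')$. After Lemma~\ref{eeproductint} one pushes $V^{u(a)}$ (not $V^{u(b)}$) back in via Proposition~\ref{vactionone}: the $u(a)-u(a+b)$ steps with $\val_p>0$ contribute the outer $V^{u(a)-u(a+b)}$ on the coefficient, and the remaining $u(a+b)$ steps with $\val_p\leqslant 0$ contribute an extra $p^{u(a+b)}$ precisely on those $L$ with $L_0=\emptyset$. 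With this correction the divisibility bookkeeping you anticipated collapses to reading off the three cases of Proposition~\ref{vactionone}, and no further term-by-term matching is needed.
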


\begin{proof}
Put $\tilde{I}\coloneqq\bigcup_{i\in\llbracket1,\#I\rrbracket}I_{i}$. We can compute:
\begin{multline*}
e(\eta,a,I)e(\eta',b,J)\\\begin{aligned}
&\overset{\eqref{vxfyvxy}}{=}V^{u(a)}\left(\eta\left[\underline{X}^{p^{u(a)}(a|_{I_{0}})}\right]F^{u(a)}\left(e(1,a|_{\tilde{I}},I)e(\eta',b,J)\right)\right)\\
&\overset{\hphantom{\eqref{vxfyvxy}}}{\overset{\ref{factionone}}{=}}V^{u(a)}\left(\eta\left[\underline{X}^{p^{u(a)}(a|_{I_{0}})}\right]e\left(1,p^{u(a)}a|_{\tilde{I}},I\right)e\left(p^{v}F^{u(a)-u(b)}(\eta'),p^{u(a)}b,J\right)\right)\\
&\overset{\hphantom{\eqref{vxfyvxy}}}{=}V^{u(a)}\left(e(\eta,p^{u(a)}a,I)e\left(p^{v}F^{u(a)-u(b)}(\eta'),p^{u(a)}b,J\right)\right)\text{.}\end{aligned}
\end{multline*}

According to lemma \ref{eeproductint}, there is a function $s'\colon P\to\mathbb{Z}_{(p)}$ such that:
\begin{equation*}
e(a,I)e(b,J)=V^{u(a)}\left(\sum_{L\in P}e\left(p^{v}s'(L)\eta F^{u(a)-u(b)}(\eta'),p^{u(a)}(a+b),L\right)\right)\text{.}
\end{equation*}

We can conclude by using proposition \ref{vactionone}, and the fact that the \emph{Verschiebung} endomorphism is additive.
\end{proof}

In the last two propositions of this section we are interested in the case where $k$ has characteristic $p$. The results become clearer in this case because we have $p=V(F(1))$.

\begin{lemm}\label{technicallem}
Suppose $k$ has characteristic $p$. Let $(a,I),(b,J)\in\mathcal{P}$ such that $u(a)\geqslant u(b)$ and $I_{0}\neq\emptyset$. Denote by $P$ the set of partitions of size $\#I+\#J$ of $\Supp(a+b)$, and put:
\begin{equation*}
v\coloneqq\left\{\begin{array}{ll}u(b)&\text{if }J_{0}\neq\emptyset\text{,}\\0&\text{otherwise.}\end{array}\right.
\end{equation*}

Let $\alpha,\beta\in\mathbb{N}$. Then for any $\eta\in V^{\alpha}(W(k))$ and any $\eta'\in V^{\beta}(W(k))$ there exists a function $s\colon P\to W(k)$ such that:
\begin{gather*}
\forall L\in P,\ \left\{\begin{array}{ll}s(L)\in V^{\alpha+\beta+v+u(a)}(W(k))&\text{if }L_{0}=\emptyset\text{,}\\s(L)\in V^{\alpha+\beta+v+u(a)-u(a+b)}(W(k))&\text{otherwise,}\end{array}\right.\\
e(\eta,a,I)e(\eta',b,J)=\sum_{L\in P}e\left(s(L),a+b,L\right)\text{.}
\end{gather*}
\end{lemm}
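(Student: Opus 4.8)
The plan is to reduce this characteristic-$p$ refinement of Lemma \ref{protolemma} to that lemma itself, exploiting the extra relation $p=V(F(1))$ to convert divisibility by powers of $p$ into membership in images of iterates of the Verschiebung. First I would apply Lemma \ref{protolemma} directly: since its hypotheses ($u(a)\geqslant u(b)$ and $I_{0}\neq\emptyset$) are exactly the present ones, it yields a function $s_{0}\colon P\to\mathbb{Z}_{(p)}$ with the stated $p$-divisibility of $s_{0}(L)$ and with
\begin{equation*}
e(\eta,a,I)e(\eta',b,J)=\sum_{L\in P}e\left(s_{0}(L)V^{u(a)-u(a+b)}\left(\eta F^{u(a)-u(b)}(\eta')\right),a+b,L\right)\text{.}
\end{equation*}
So it remains to analyse the argument $s_{0}(L)V^{u(a)-u(a+b)}(\eta F^{u(a)-u(b)}(\eta'))$ and show it lies in the claimed $V^{\bullet}(W(k))$.

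Next I would track the Verschiebung depth of the inner Witt vector. Write $\eta=V^{\alpha}(\theta)$ and $\eta'=V^{\beta}(\theta')$. Using $FV=p$ (in characteristic $p$, $FV=V F=p$, and more usefully $VF=p$ so that $V^{m}$ applied after multiplication can absorb $F$'s), I would push $F^{u(a)-u(b)}$ past $V^{\beta}$: concretely $F^{u(a)-u(b)}(V^{\beta}(\theta'))=V^{\beta}(F^{u(a)-u(b)}(\theta'))$ if $\beta\geqslant u(a)-u(b)$, and otherwise equals $p^{u(a)-u(b)-\beta}F^{u(a)-u(b)-\beta}(\theta')$ up to reindexing; in either case the product $\eta F^{u(a)-u(b)}(\eta')$ lies in $V^{\min(\alpha,\dots)}$, but the cleanest route is to observe that $V^{\alpha}(\theta)\cdot V^{u(a)-u(b)+\beta-\dots}(\cdot)\in V^{\alpha+\beta}(W(k))$ by the projection-formula-type identity $V^{i}(x)V^{j}(y)=V^{i+j}(F^{j}(x)F^{i}(y))$ valid in any Witt ring, so that $\eta F^{u(a)-u(b)}(\eta')\in V^{\alpha+\beta}(W(k))$. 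Applying $V^{u(a)-u(a+b)}$ then lands in $V^{\alpha+\beta+u(a)-u(a+b)}(W(k))$. Finally, the divisibility $p^{v+u(a+b)}\mid s_{0}(L)$ when $L_{0}=\emptyset$ (resp. $p^{v}\mid s_{0}(L)$ otherwise) is converted via $p\cdot w=V(F(w))$, i.e. multiplication by $p^{c}$ maps $W(k)$ into $V^{c}(W(k))$ in characteristic $p$; combining, the argument lies in $V^{\alpha+\beta+u(a)-u(a+b)+v+u(a+b)}(W(k))=V^{\alpha+\beta+v+u(a)}(W(k))$ in the $L_{0}=\emptyset$ case, and in $V^{\alpha+\beta+v+u(a)-u(a+b)}(W(k))$ otherwise. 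Defining $s(L)$ to be this argument gives the claim.

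The main obstacle I expect is bookkeeping the interaction of $F^{u(a)-u(b)}$ with the Verschiebung depth $\beta$ of $\eta'$ without losing depth: naively $F$ kills depth, and if $\beta<u(a)-u(b)$ one produces a factor $p^{u(a)-u(b)-\beta}$ which one must then re-absorb as Verschiebung depth via $p=VF$, and one must check this exactly compensates so that the final exponent is independent of whether $\beta$ was large or small. The honest way to avoid case analysis is to use the universal identity $V^{i}(x)\,V^{j}(y)=V^{i+j}\!\left(F^{j}(x)F^{i}(y)\right)$ together with $p\,W(k)\subseteq V(W(k))$ (characteristic $p$), which together guarantee $\eta\,F^{u(a)-u(b)}(\eta')\in V^{\alpha+\beta}(W(k))$ regardless — indeed $F^{u(a)-u(b)}(V^{\beta}(\theta'))$ is either in $V^{\beta-(u(a)-u(b))}(W(k))$ with a compensating... but multiplied by $V^{\alpha}(\theta)$ the total depth only improves. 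A secondary point to verify is that Lemma \ref{protolemma} genuinely applies with the \emph{same} set $P$ and the \emph{same} value of $v$; this is immediate since both are defined identically here. No step is deep; the content is entirely in the identities $FV=p$, $p\,W(k)\subseteq V(W(k))$, and the Verschiebung product formula, all standard for Witt vectors over an $\mathbb{F}_{p}$-algebra.
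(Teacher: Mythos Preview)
Your proposal is correct and follows essentially the same route as the paper: apply Lemma~\ref{protolemma}, then use the characteristic-$p$ identities $FV=VF=p$ (so $p^{c}W(k)\subset V^{c}(W(k))$ and $F$ preserves each $V^{\beta}(W(k))$) together with the multiplicativity of the $V$-filtration $V^{\alpha}(W(k))\cdot V^{\beta}(W(k))\subset V^{\alpha+\beta}(W(k))$ to upgrade the $p$-divisibility of $s_{0}(L)$ to the claimed $V$-depth of $s(L)$. The paper's proof is a two-line remark citing Bourbaki for the filtration fact; your worry about the case $\beta<u(a)-u(b)$ is unnecessary, since in characteristic $p$ the operators $F$ and $V$ commute, so $F^{m}(V^{\beta}(\theta'))=V^{\beta}(F^{m}(\theta'))\in V^{\beta}(W(k))$ regardless.
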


\begin{proof}
This is a special case of lemma \ref{protolemma} when $k$ has characteristic $p$, because in that case we have $px=F(V(x))=V(F(x))$ for any $x\in W(k)$, but also $\eta\eta'\in V^{\alpha+\beta}(W(k))$ \cite[Proposition 5. p. IX.15]{bourbakicommutativehuit}.
\end{proof}

\begin{prop}\label{mainprop}
Suppose $k$ has characteristic $p$. Let $(a,I),(b,J)\in\mathcal{P}$ with $I_{0}\neq\emptyset$. Denote by $P$ the set of partitions of size $\#I+\#J$ of $\Supp\left(a+b\right)$. Let $\alpha,\beta\in\mathbb{N}$. Then for any $\eta\in V^{\alpha}(W(k))$ and any $\eta'\in V^{\beta}(W(k))$ there exists a function $s\colon P\to W(k)$ such that:
\begin{gather*}
\forall L\in P,\ \left\{\begin{array}{ll}s(L)\in V^{\alpha+\beta+\min\{u(a),u(b)\}}(W(k))&\text{if }L_{0}=\emptyset\text{,}\\s(L)\in V^{\alpha+\beta+\max\{u(a),u(b)\}-u(a+b)}(W(k))&\text{otherwise,}\end{array}\right.\\
e(\eta,a,I)e(\eta',b,J)=\sum_{L\in P}e(s(L),a+b,L)\text{.}
\end{gather*}
\end{prop}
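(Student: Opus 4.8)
The plan is to reduce to Lemma~\ref{technicallem} by a case analysis on whether $J_0=\emptyset$, using Propositions~\ref{dactionone}, \ref{factionone} and \ref{vactionone} together with the symmetry of the problem to massage the given pair $(a,I),(b,J)$ into the shape where that lemma applies (namely $u(a)\geqslant u(b)$ and $I_0\neq\emptyset$). The role of Lemma~\ref{technicallem} is that it already proves exactly this statement under the extra hypothesis $u(a)\geqslant u(b)$, but with $v+u(a)$ in place of $\min\{u(a),u(b)\}$ and $v+u(a)-u(a+b)$ in place of $\max\{u(a),u(b)\}-u(a+b)$, where $v=u(b)$ if $J_0\neq\emptyset$ and $v=0$ otherwise. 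So the whole content is: (i) when $u(a)\geqslant u(b)$, check that $v+u(a)\geqslant\min\{u(a),u(b)\}$ and $v+u(a)-u(a+b)\geqslant\max\{u(a),u(b)\}-u(a+b)$ — both are immediate since $v\geqslant 0$ and $u(a)=\max\{u(a),u(b)\}\geqslant\min\{u(a),u(b)\}$ — so that the $V$-divisibility furnished by Lemma~\ref{technicallem} is at least as strong as claimed; and (ii) handle the case $u(a)<u(b)$.

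For step (ii), first I would treat the easy subcase $J_0\neq\emptyset$: then $(b,J)$ also has nonempty zero-part and $u(b)>u(a)$, so by commutativity of $W\Omega_{k[\underline{X}]/k}$ (more precisely, products of basic elements commute up to sign since the statement only asserts existence of the coefficient function $s$, and replacing $s(L)$ by $-s(L)$ changes nothing) we may apply Lemma~\ref{technicallem} with the roles of $(a,I)$ and $(b,J)$ exchanged. That gives $s(L)\in V^{\alpha+\beta+u(a)}(W(k))$ when $L_0=\emptyset$ and $s(L)\in V^{\alpha+\beta+u(b)-u(a+b)}(W(k))$ otherwise, which is precisely $V^{\alpha+\beta+\min\{u(a),u(b)\}}$ and $V^{\alpha+\beta+\max\{u(a),u(b)\}-u(a+b)}$ in this subcase. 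The remaining subcase is $u(a)<u(b)$ and $J_0=\emptyset$: here we cannot simply swap, because Lemma~\ref{technicallem} requires the factor with the larger $u$ to have nonempty zero-part.

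The main obstacle is therefore precisely this last subcase. My approach would be to write $e(\eta',b,J)$ with $J_0=\emptyset$ and $u(b)>0$ using the first branch of \eqref{eelement}, so that $e(\eta',b,J)=d\big(V^{u(b)}(\eta'[\underline{X}^{p^{u(b)}b|_{J_0}}])\big)\prod_{l=2}^{\#J}g(b|_{J_l})$, and then apply the Leibniz rule \eqref{dbasicprop}: the product $e(\eta,a,I)e(\eta',b,J)$ becomes, up to signs, $d\big(e(\eta,a,I)\,V^{u(b)}(\eta'[\cdots])\prod_{l\geqslant 2}g(b|_{J_l})\big)$ minus $\pm\,d(e(\eta,a,I))\cdot V^{u(b)}(\eta'[\cdots])\prod_{l\geqslant 2}g(b|_{J_l})$. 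Both resulting products now involve $V^{u(b)}(\eta'[\cdots])$, which is a genuine basic element $e(\text{something},b',J')$ with \emph{nonempty} zero-part $J'_0$, while $d(e(\eta,a,I))$ is again basic by Proposition~\ref{dactionone}; so each of the two terms is a product of basic elements of total weight $a+b$ in which at least one factor has nonempty zero-part, hence Lemma~\ref{technicallem} (after possibly swapping to put the larger-$u$ factor's zero-part forward, which is now legal) applies to each, and finally Proposition~\ref{dactionone} re-expresses $d$ of the resulting sum of basic elements as another sum of basic elements without losing $V$-divisibility (since $d$ either fixes the $\eta$ or multiplies by $p^{\val_p(\cdot)}$, which only helps). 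Collecting the divisibility exponents from both terms and taking the minimum yields the claimed bounds; tracking that these exponents really come out to $\alpha+\beta+\min\{u(a),u(b)\}$ and $\alpha+\beta+\max\{u(a),u(b)\}-u(a+b)$, using $u(b)=\max\{u(a),u(b)\}$ and the behaviour of $u$ under $d$ and under passing from $b$ to the weight function underlying $V^{u(b)}(\eta'[\cdots])$, is the one bookkeeping point that needs care.
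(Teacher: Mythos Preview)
Your proposal is correct and takes essentially the same approach as the paper: reduce to Lemma~\ref{technicallem}, handling the one problematic case $u(b)>u(a)$, $J_{0}=\emptyset$ via the Leibniz rule after writing $e(\eta',b,J)$ as $d$ of a basic element with nonempty zero-part. The paper streamlines this last step by invoking Proposition~\ref{dactionone} directly to get $e(\eta',b,J)=d\bigl(e(\eta',b,J\smallsetminus\{\min(b)\})\bigr)$, rather than unpacking the explicit formula~\eqref{eelement} as you do; this spares you the roundabout identification of $V^{u(b)}(\eta'[\cdots])\prod_{l\geqslant2}g(b|_{J_{l}})$ as a basic element.
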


\begin{proof}
This statement is a particular case of lemma \ref{technicallem}, except when $u(b)>u(a)$ and $J_{0}=\emptyset$. In that situation, if $J'\coloneqq J\smallsetminus\{\min(b)\}$ we deduce from proposition \ref{dactionone} that:
\begin{equation*}
\begin{split}
e(\eta,a,I)e(\eta',b,J)&=e(\eta,a,I)d(e(\eta,b,J'))\\
&=(-1)^{\#I}\left(d(e(\eta,a,I)e(\eta',b,J'))-d(e(\eta,a,I))e(\eta',b,J')\right)\text{.}
\end{split}
\end{equation*}

This enables us to conclude using lemma \ref{technicallem} again.
\end{proof}

\section{Pseudovaluations}

We shall now consider the case where $k$ is a commutative ring of characteristic $p$. Let $n\in\mathbb{N}$, and let $k[\underline{X}]\coloneqq k[X_{1},\ldots,X_{n}]$. Recall that theorem \ref{structuretheorem} says that any $x\in W\Omega_{k[\underline{X}]/k}$ can be uniquely written as $\sum_{(a,I)\in\mathcal{P}}e(\eta_{a,I},a,I)$, where all $\eta_{a,I}\in W(k)$. This allows us to define specific $W(k)$-submodules of the de Rham-Witt complex.

Any $x=\sum_{(a,I)\in\mathcal{P}}e(\eta_{a,I},a,I)\in W\Omega_{k[\underline{X}]/k}$ is said to be \textbf{integral} if $\eta_{a,I}=0$ anytime $u(a)\neq0$. We denote by $W\Omega^{\mathrm{int}}_{k[\underline{X}]/k}$ the subset of all integral elements of the de Rham-Witt complex.

The element $x$ is said to be \textbf{fractional} if $\eta_{a,I}=0$ whenever $u(a)=0$. We denote by $W\Omega^{\mathrm{frac}}_{k[\underline{X}]/k}$ the subset of all fractional elements of the de Rham-Witt complex.

The element $x$ is said to be \textbf{pure fractional} if $\eta_{a,I}=0$ anytime $u(a)=0$ or $I_{0}=\emptyset$. We denote by $W\Omega^{\mathrm{frp}}_{k[\underline{X}]/k}$ the subset of all pure fractional elements of the de Rham-Witt complex.

Notice that we have the following decomposition as $W(k)$-modules:
\begin{equation}\label{decomposition}
W\Omega_{k[\underline{X}]/k}\cong W\Omega_{k[\underline{X}]/k}^{\mathrm{int}}\oplus W\Omega_{k[\underline{X}]/k}^{\mathrm{frp}}\oplus d\left(W\Omega_{k[\underline{X}]/k}^{\mathrm{frp}}\right)\text{.}
\end{equation}

In all this chapter, for any $x\in W\Omega_{k[\underline{X}]/k}$, we will write $x|_{\mathrm{int}}$, $x|_{\mathrm{frac}}$, $x|_{\mathrm{frp}}$ and $x|_{\mathrm{d(frp)}}$ the obvious projections for this decomposition.

We will also denote by $\val_{V}$ the $V$-adic pseudovaluation on $W(k)$. Davis, Langer and Zink have defined the following functions for any $\varepsilon>0$:
\begin{equation*}
\gamma_{\varepsilon}\colon\begin{array}{rl}W\Omega_{k[\underline{X}]/k}\to&\mathbb{R}\cup\{+\infty,-\infty\}\\\sum_{(a,I)\in\mathcal{P}}e(\eta_{a,I},a,I)\mapsto&\inf_{(a,I)\in\mathcal{P}}\{\val_{V}(\eta_{a,I})+u(a)-\varepsilon\lvert a\rvert\}\text{.}\end{array}
\end{equation*}

The overconvergent de Rham-Witt complex of $k[\underline{X}]$ is the set of all $x\in W\Omega_{k[\underline{X}]/k}$ such that there exists $\varepsilon>0$ with $\gamma_{\varepsilon}(x)\neq-\infty$.

One of the main obstacles to studying the overconvergence of recursive sequences containing products of de Rham-Witt differentials is that these functions are not pseudovaluations. We will first study two counterexamples to the product rule in the case where $k[\underline{X}]\cong k[X,Y]$ as $k$-algebras. That is, we will find $x,y\in W\Omega_{k[\underline{X}]/k}$ such that for all $\varepsilon>0$ we have $\gamma_{\varepsilon}(x)\neq-\infty$, $\gamma_{\varepsilon}(y)\neq-\infty$ and $\gamma_{\varepsilon}(xy)<\gamma_{\varepsilon}(x)+\gamma_{\varepsilon}(y)$.

For any $m\in\mathbb{N}$, notice that:
\begin{gather*}
V^{m}\left(\left[X^{p^{m}-1}\right]\right)d\left(V^{m}\left([X]\right)\right)=p^{m}d([X])\text{,}\\
\gamma_{\varepsilon}\left(V^{m}\left(\left[X^{p^{m}-1}\right]\right)\right)=m-\frac{\varepsilon(p^{m}-1)}{p^{m}}\text{,}\\
\gamma_{\varepsilon}\left(d\left(V^{m}\left([X]\right)\right)\right)=m-\frac{\varepsilon}{p^{m}}\text{,}\\
\gamma_{\varepsilon}(p^{m}d([X]))=m-\varepsilon<2m-\varepsilon\text{.}
\end{gather*}

This first counterexample happens when taking the product of two fractional elements. The phenomena happening here is that the power of the denominator of the weight functions (that we denoted $a\mapsto u(a)$) can get smaller when taking products of differentials. Indeed, we have seen in propositions such as \ref{mainprop} that multiplying basic elements translates as an addition for weight functions. However, we notice in this example that the $V$-pseudovaluation we have to calculate gets bigger, it is just not big enough so it compensates the decrease of $u$. In this example, it seems to be enough to multiply the $V$-pseudovaluation by $2$. It still is not sufficient in general, as seen in this second counterexample:
\begin{gather*}
\gamma_{\varepsilon}\left(V^{m}\left(\left[X^{p^{m}-1}\right]\right)\right)=m-\frac{\varepsilon(p^{m}-1)}{p^{m}}\text{,}\\
\gamma_{\varepsilon}\left(d\left(V^{m}\left([Y]\right)\right)\right)=m-\frac{\varepsilon}{p^{m}}\text{,}\\
\gamma_{\varepsilon}(V^{m}\left(\left[X^{p^{m}-1}\right]\right)d\left(V^{m}\left([Y]\right)\right))=m-\varepsilon<2m-\varepsilon\text{.}
\end{gather*}

In this situation, the value of the function $u$ always stays the same. The reason is that $V^{m}\left(\left[X^{p^{m}-1}\right]\right)d\left(V^{m}\left([Y]\right)\right)$ is already an element of the form \eqref{eelement}, so no simplifications are needed as in the first counterexample. However, in all cases the function $u$ is only counted once in $\gamma_{\varepsilon}$, even though it should be counted twice in the product. In order for the product formula to work in general, we need to multiply $u$ by the number of factors in the definition of \eqref{eelement}. As this number is smaller than $n$, as remarked after the first counterexample we also have to multiply the $V$-pseudovaluation by $2n$. This leads us to the definition below.

For any $\varepsilon>0$ put:
\begin{equation*}
\zeta_{\varepsilon}\colon\begin{array}{rl}W\Omega_{k[\underline{X}]/k}\to&\mathbb{R}\cup\{+\infty,-\infty\}\\x\mapsto&\left\{\begin{array}{ll}\inf_{(a,I)\in\mathcal{P}}\{2n\val_{V}(\eta_{a,I})+\#Iu(a)-\varepsilon\lvert a\rvert\}&\text{if }I_{0}=\emptyset\text{,}\\\inf_{(a,I)\in\mathcal{P}}\{2n\val_{V}(\eta_{a,I})+(\#I+1)u(a)-\varepsilon\lvert a\rvert\}&\text{if }I_{0}\neq\emptyset\text{,}\end{array}\right.\end{array}
\end{equation*}
where we wrote $x=\sum_{(a,I)\in\mathcal{P}}e(\eta_{a,I},a,I)$.

We are going to prove that these functions are pseudovaluations. Before we check the product formula, we first give a few basic properties. It is for instance immediate that:
\begin{equation}\label{zetaaddition}
\forall x,y\in W\Omega_{k[\underline{X}]/k},\ \zeta_{\varepsilon}(x+y)\geqslant\min\left\{\zeta_{\varepsilon}(x),\zeta_{\varepsilon}(y)\right\}\text{.}
\end{equation}

Also, a consequence of proposition \ref{dactionone} is that:
\begin{equation}\label{detzetavareps}
\forall x\in W\Omega_{k[\underline{X}]/k},\ \zeta_{\varepsilon}(d(x))\geqslant\zeta_{\varepsilon}(x)\text{.}
\end{equation}

The following proposition tells us that we recover the definition of the overconvergent de Rham-Witt complex with these functions:

\begin{prop}
Let $x\in W\Omega_{k[\underline{X}]/k}$. There exists $\varepsilon>0$ such that $\gamma_{\varepsilon}(x)\neq-\infty$ if and only if $\zeta_{\varepsilon'}(x)\neq-\infty$ for some $\varepsilon'>0$.
\end{prop}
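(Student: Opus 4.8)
The plan is to compare the two families of functions term by term, using the explicit decomposition of $x=\sum_{(a,I)\in\mathcal{P}}e(\eta_{a,I},a,I)$ furnished by theorem \ref{structuretheorem}. Fix $x$ and write, for each pair $(a,I)$ appearing with $\eta_{a,I}\neq0$, the quantity $\delta_\gamma(a,I)\coloneqq\val_{V}(\eta_{a,I})+u(a)-\varepsilon\lvert a\rvert$ defining $\gamma_{\varepsilon}$, and $\delta_\zeta(a,I)$ the corresponding term in the definition of $\zeta_{\varepsilon}$, namely $2n\val_{V}(\eta_{a,I})+\#I\,u(a)-\varepsilon\lvert a\rvert$ (resp. with $(\#I+1)u(a)$ when $I_0\neq\emptyset$). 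The key elementary observation is that $\#I\leqslant n$ and $\#I+1\leqslant n+1$, that $\val_{V}(\eta_{a,I})\geqslant0$, and that $u(a)\geqslant0$ always, so each term of $\zeta_{\varepsilon}$ is controlled above and below by an affine expression in the same two nonnegative quantities $\val_{V}(\eta_{a,I})$ and $u(a)$ that define the term of $\gamma_{\varepsilon}$.

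First I would prove the easy direction. Suppose $\gamma_{\varepsilon}(x)\neq-\infty$, say $\gamma_{\varepsilon}(x)\geqslant C$, so $\val_{V}(\eta_{a,I})+u(a)\geqslant C+\varepsilon\lvert a\rvert$ for all $(a,I)$ with $\eta_{a,I}\neq0$. I want to bound $\delta_\zeta(a,I)$ below by something of the form $C'+\varepsilon'\lvert a\rvert$ with $\varepsilon'>0$. Since $2n\val_{V}(\eta_{a,I})+(\#I+1)u(a)\geqslant \val_{V}(\eta_{a,I})+u(a)$ (as $2n\geqslant1$ and $\#I+1\geqslant1$ and both quantities are nonnegative), we immediately get $\delta_\zeta(a,I)\geqslant \gamma_{\varepsilon}(x)+\varepsilon\lvert a\rvert-\varepsilon\lvert a\rvert=\gamma_\varepsilon(x)$ when we also replace the $-\varepsilon\lvert a\rvert$ appropriately — but to keep a strictly positive coefficient of $\lvert a\rvert$ one should instead run the argument with a slightly smaller parameter: pick $\varepsilon'=\varepsilon/2$, and use $2n\val_{V}(\eta_{a,I})+(\#I+1)u(a)-\varepsilon\lvert a\rvert\geqslant\val_{V}(\eta_{a,I})+u(a)-\varepsilon\lvert a\rvert\geqslant C$, whence $\delta_\zeta(a,I)\geqslant C-\varepsilon'\lvert a\rvert$... which is unbounded below. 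The cleaner route is: from $\val_{V}(\eta_{a,I})+u(a)\geqslant C+\varepsilon\lvert a\rvert$ one has both $\val_{V}(\eta_{a,I})\geqslant \tfrac12(C+\varepsilon\lvert a\rvert-u(a))$ is not useful; rather simply note that the defining term of $\gamma_{\varepsilon}$ being bounded below forces, for each such $(a,I)$, $\lvert a\rvert\leqslant \varepsilon^{-1}(\val_{V}(\eta_{a,I})+u(a)-C)$, and then bound $\delta_\zeta$ using $2n\val_{V}+(\#I+1)u(a)\leqslant 2n\val_{V}+(n+1)u(a)\leqslant (2n)(\val_V+u(a))$. Combining, $\delta_\zeta(a,I)\geqslant -\,?$; I would organize this so that $\gamma_{\varepsilon}(x)\geqslant C$ implies $\zeta_{\varepsilon/(2n)}(x)\geqslant 2nC$ or similar, using $\val_{V}(\eta_{a,I})+u(a)-\varepsilon\lvert a\rvert\geqslant C\Rightarrow 2n\val_{V}(\eta_{a,I})+2n\,u(a)-2n\varepsilon\lvert a\rvert\geqslant 2nC$ and then discarding the $(2n-\#I-1)u(a)\geqslant0$ slack to get $\delta_\zeta(a,I)$ with $\varepsilon$ replaced by $2n\varepsilon$, so $\zeta_{2n\varepsilon}(x)\geqslant 2nC\neq-\infty$.

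For the converse, suppose $\zeta_{\varepsilon}(x)\geqslant C$. Then for every $(a,I)$ with $\eta_{a,I}\neq0$ we have $2n\val_{V}(\eta_{a,I})+(\#I+1)u(a)\geqslant C+\varepsilon\lvert a\rvert$, hence $\val_{V}(\eta_{a,I})+u(a)\geqslant\tfrac{1}{2n}\bigl(2n\val_{V}(\eta_{a,I})+(\#I+1)u(a)\bigr)$ using $(\#I+1)\leqslant n+1\leqslant 2n$ when $n\geqslant1$ — and here I must treat the trivial case $n=0$ separately, where $W\Omega_{k[\underline X]/k}$ is just $W(k)$ concentrated in degree $0$, all weight functions are zero, and both functions coincide up to the factor $2n$ being vacuous, so the statement is immediate. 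For $n\geqslant1$ we obtain $\val_{V}(\eta_{a,I})+u(a)\geqslant\tfrac{1}{2n}(C+\varepsilon\lvert a\rvert)$, so $\delta_\gamma(a,I)=\val_{V}(\eta_{a,I})+u(a)-\tfrac{\varepsilon}{2n}\lvert a\rvert\geqslant\tfrac{C}{2n}$, giving $\gamma_{\varepsilon/(2n)}(x)\geqslant C/(2n)\neq-\infty$. I do not anticipate a genuine obstacle here; the only subtlety is bookkeeping the $I_0=\emptyset$ versus $I_0\neq\emptyset$ cases (in the former, $\#I$ replaces $\#I+1$, which only strengthens the inequalities since $\#I\leqslant\#I+1$) and the degenerate case $n=0$, both of which are disposed of in a line. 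The proof is thus a two-sided comparison with the explicit change of parameter $\varepsilon\mapsto\varepsilon/(2n)$ in one direction and $\varepsilon\mapsto 2n\varepsilon$ in the other.
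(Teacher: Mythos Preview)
Your approach is the same as the paper's --- a termwise comparison of the two infima --- and your converse direction is correct: from $\zeta_{\varepsilon}(x)\geqslant C$ you correctly derive $\gamma_{\varepsilon/(2n)}(x)\geqslant C/(2n)$, which is exactly the paper's inequality $2n\,\gamma_{\varepsilon/(2n)}(x)\geqslant\zeta_{\varepsilon}(x)$. The $n=0$ case is handled just as in the paper.

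The forward direction, however, is muddled. Your very first observation is already the complete argument: since $2n\geqslant1$, $\#I+1\geqslant1$ (and $\#I\geqslant1$ whenever $I_{0}=\emptyset$ and $u(a)>0$), and $\val_{V}(\eta_{a,I}),u(a)\geqslant0$, one has
\[
2n\,\val_{V}(\eta_{a,I})+(\#I+1)\,u(a)-\varepsilon\lvert a\rvert\ \geqslant\ \val_{V}(\eta_{a,I})+u(a)-\varepsilon\lvert a\rvert,
\]
hence $\zeta_{\varepsilon}(x)\geqslant\gamma_{\varepsilon}(x)$ with the \emph{same} $\varepsilon$. That is precisely the right-hand inequality in the paper's one-line proof, and it finishes this direction immediately. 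There is no need to ``keep a strictly positive coefficient of $\lvert a\rvert$''; you only need the infimum to be finite, not to tend to $+\infty$.

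Your final attempt in that paragraph is actually wrong. From $2n\,\val_{V}+2n\,u(a)-2n\varepsilon\lvert a\rvert\geqslant 2nC$ you cannot ``discard the $(2n-\#I-1)u(a)\geqslant0$ slack'' to conclude $2n\,\val_{V}+(\#I+1)\,u(a)-2n\varepsilon\lvert a\rvert\geqslant 2nC$: subtracting a nonnegative quantity from the left-hand side weakens the lower bound, so the claimed inequality $\zeta_{2n\varepsilon}(x)\geqslant 2nC$ does not follow. Drop that detour and keep your first line; then both directions combine into the paper's chain $2n\,\gamma_{\varepsilon/(2n)}(x)\geqslant\zeta_{\varepsilon}(x)\geqslant\gamma_{\varepsilon}(x)$.
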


\begin{proof}
Notice that whenever $n\neq0$ we have:
\begin{equation*}
\forall x\in W\Omega_{k[\underline{X}]/k},\ 2n\gamma_{\frac{\varepsilon}{2n}}(x)\geqslant\zeta_{\varepsilon}(x)\geqslant\gamma_{\varepsilon}(x)\text{.}
\end{equation*}

This ends the proof except when $n=0$. But when $n=0$ then $W\Omega_{k[\underline{X}]/k}\cong W(k)$ as $W(k)$-dgas so there is nothing to do.
\end{proof}

We are now going to prove the product formula. We are doing it by exhaustion using the decomposition \eqref{decomposition}. Even though most of the proofs below follow the same, simple strategy, it is still interesting to proceed that way to get a stronger formula in some cases.

\begin{prop}\label{intintzeta}
For any $\varepsilon>0$ and any $x,y\in W\Omega_{k[\underline{X}]/k}^{\mathrm{int}}$ we have:
\begin{equation*}
\left(\zeta_{\varepsilon}(x)\neq-\infty\wedge\zeta_{\varepsilon}(y)\neq-\infty\right)\implies\zeta_{\varepsilon}(xy)\geqslant\zeta_{\varepsilon}(x)+\zeta_{\varepsilon}(y)\text{.}
\end{equation*}
\end{prop}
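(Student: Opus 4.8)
The plan is to reduce everything to the case where $x = e(\eta,a,I)$ and $y = e(\eta',b,J)$ are single basic elements, since $\zeta_\varepsilon$ is defined as an infimum over such terms and satisfies the additivity estimate \eqref{zetaaddition}; one has to be slightly careful that an infinite sum of basic elements, when multiplied out, still rearranges into a convergent sum of basic elements to which \eqref{zetaaddition} applies termwise, but this follows from Theorem \ref{structuretheorem} together with the bounds we are about to establish. So fix $(a,I),(b,J)\in\mathcal{P}$ with $u(a)=u(b)=0$ (integrality) and $\eta,\eta'\in W(k)$; write $\alpha\coloneqq\val_V(\eta)$ and $\beta\coloneqq\val_V(\eta')$. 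Then $u(a+b)=0$ as well, and $\lvert a+b\rvert = \lvert a\rvert + \lvert b\rvert$. The contribution of $e(\eta,a,I)$ to $\zeta_\varepsilon(x)$ is $2n\alpha + c(I)u(a) - \varepsilon\lvert a\rvert = 2n\alpha - \varepsilon\lvert a\rvert$ (the $u$-term vanishes), and similarly for $y$; so $\zeta_\varepsilon(x) + \zeta_\varepsilon(y)$ is bounded above by $2n(\alpha+\beta) - \varepsilon(\lvert a\rvert + \lvert b\rvert)$, and it suffices to show every basic term appearing in the product $e(\eta,a,I)e(\eta',b,J)$ has $\zeta_\varepsilon$-weight at least this quantity.

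The key input is Lemma \ref{eeproductint} (or Proposition \ref{mainprop} with $u(a)=u(b)=0$): there is a finite set $P$ of partitions of $\Supp(a+b)$ of size $\#I+\#J$ and coefficients $s(L)\in\mathbb{Z}_{(p)}$ with
\begin{equation*}
e(\eta,a,I)e(\eta',b,J)=\sum_{L\in P}e(s(L)\eta\eta',a+b,L)\text{.}
\end{equation*}
Now I would estimate the $\zeta_\varepsilon$-weight of each summand $e(s(L)\eta\eta',a+b,L)$. Since $k$ has characteristic $p$, $s(L)\in\mathbb{Z}_{(p)}$ acts on $W(k)$ through its image, and $\val_V(s(L)\eta\eta')\geqslant \val_V(\eta\eta')\geqslant \val_V(\eta)+\val_V(\eta')=\alpha+\beta$ by the cited fact that $\eta\eta'\in V^{\alpha+\beta}(W(k))$ (Bourbaki, as used in the proof of Lemma \ref{technicallem}); multiplication by a $p$-adic unit does not decrease $\val_V$, and multiplication by a power of $p$ only increases it. The weight-function part of the summand is $a+b$, which has $u(a+b)=0$, so its $u$-contribution is zero regardless of whether $L_0$ is empty, and its $\lvert\cdot\rvert$-contribution is exactly $\varepsilon(\lvert a\rvert + \lvert b\rvert)$. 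Hence the $\zeta_\varepsilon$-weight of each summand is at least $2n(\alpha+\beta) - \varepsilon(\lvert a\rvert+\lvert b\rvert)$, which is what we wanted; taking the infimum over $L\in P$ and then over all pairs of basic terms of $x$ and $y$, and invoking \eqref{zetaaddition} to pass from the term-by-term bound to $\zeta_\varepsilon(xy)$, gives $\zeta_\varepsilon(xy)\geqslant\zeta_\varepsilon(x)+\zeta_\varepsilon(y)$.

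The main obstacle, and the only genuinely non-formal point, is the bookkeeping around the hypothesis $\zeta_\varepsilon(x),\zeta_\varepsilon(y)\neq-\infty$: one must check that when $x,y$ are genuine (possibly infinite) sums, the product really is a convergent sum of basic elements whose $\zeta_\varepsilon$-values are controlled, so that \eqref{zetaaddition} can be applied to the (possibly infinite) family of terms — this is where the finiteness built into the convergence criterion of Theorem \ref{structuretheorem} and the explicit nature of Lemma \ref{eeproductint} are used. The purely numerical inequality, by contrast, is almost immediate precisely because integrality forces every $u$-term to vanish, so the subtle interplay between $u$ and $\val_V$ that motivated the factor $2n$ plays no role here; this is the easy base case of the exhaustion, and I expect the later cases involving fractional elements to be where the constant $2n$ actually earns its keep.
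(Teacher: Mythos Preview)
Your proposal is correct and follows essentially the same approach as the paper: reduce to a product of basic elements, apply Lemma~\ref{eeproductint}, and use $u(a+b)=0$ together with the inequality $\val_V(\eta\eta')\geqslant\val_V(\eta)+\val_V(\eta')$ to bound each resulting term. Your additional remarks on the convergence bookkeeping for infinite sums and on why multiplication by $s(L)\in\mathbb{Z}_{(p)}$ does not decrease $\val_V$ make explicit points the paper leaves implicit, but the core argument is the same.
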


\begin{proof}
By definition of $W\Omega_{k[\underline{X}]/k}^{\mathrm{int}}$, we know that for all $(a,I),(b,J)\in\mathcal{P}$ there exists $\eta_{a,I},\eta_{b,J}'\in W(k)$ such that:
\begin{gather*}
x=\sum_{\substack{(a,I)\in\mathcal{P}\\u(a)=0}}e(\eta_{a,I},a,I)\text{,}\\
y=\sum_{\substack{(b,J)\in\mathcal{P}\\u(b)=0}}e(\eta'_{b,J},b,J)\text{.}
\end{gather*}

For any $(a,I),(b,J)\in\mathcal{P}$ such that $u(a)=u(b)=0$, using lemma \ref{eeproductint} we get:
\begin{equation*}
\zeta_{\varepsilon}\left(e(\eta_{a,I},a,I)e(\eta'_{b,J},b,J)\right)\geqslant2n\val_{V}(\eta_{a,I}\eta_{b,J}')+(\#I+\#J)u(a+b)-\varepsilon\lvert a+b\rvert\text{.}
\end{equation*}

Since $u(a+b)=0$ and $\val_{V}(\eta_{a,I}\eta_{b,J}')\geqslant\val_{V}(\eta_{a,I})+\val_{V}(\eta_{b,J}')$ because $k$ has characteristic $p$ \cite[Proposition 5. p. IX.15]{bourbakicommutativehuit}, we can conclude.
\end{proof}

\begin{prop}\label{frpintdfrpzeta}
For any $\varepsilon>0$, any $x\in W\Omega_{k[\underline{X}]/k}^{\mathrm{int}}$ and any $y\in W\Omega_{k[\underline{X}]/k}^{\mathrm{frp}}$ we have:
\begin{equation*}
\left(\zeta_{\varepsilon}(x)\neq-\infty\wedge\zeta_{\varepsilon}(y)\neq-\infty\right)\implies\zeta_{\varepsilon}((xy)|_{\mathrm{d(frp)}})\geqslant\zeta_{\varepsilon}(x)+\zeta_{\varepsilon}(y)+1\text{.}
\end{equation*}
\end{prop}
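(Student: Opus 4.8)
The plan is to follow the pattern of Proposition~\ref{intintzeta}: reduce to a single pair of basic elements, apply the product formula of Section~2, and estimate $\zeta_{\varepsilon}$ term by term. Using Theorem~\ref{structuretheorem} and the definitions of the submodules I would write $x=\sum_{u(a)=0}e(\eta_{a,I},a,I)$ and $y=\sum_{u(b)\neq0,\,J_{0}\neq\emptyset}e(\eta'_{b,J},b,J)$. Since $\zeta_{\varepsilon}(-z)=\zeta_{\varepsilon}(z)$, since $\zeta_{\varepsilon}$ satisfies \eqref{zetaaddition} (also for convergent series), and since the projection onto $d\!\left(W\Omega^{\mathrm{frp}}_{k[\underline{X}]/k}\right)$ in \eqref{decomposition} commutes with such series, it suffices to fix $(a,I),(b,J)\in\mathcal{P}$ with $u(a)=0$, $u(b)\neq0$, $J_{0}\neq\emptyset$ and $\eta,\eta'\in W(k)\smallsetminus\{0\}$, and to prove
\[
\zeta_{\varepsilon}\!\left(\left(e(\eta,a,I)e(\eta',b,J)\right)|_{\mathrm{d(frp)}}\right)\geqslant\zeta_{\varepsilon}(e(\eta,a,I))+\zeta_{\varepsilon}(e(\eta',b,J))+1\text{.}
\]
One may assume $n\geqslant1$, since $W\Omega^{\mathrm{frp}}_{k[\underline{X}]/k}=0$ when $n=0$. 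Set $\alpha\coloneqq\val_{V}(\eta')$ and $\beta\coloneqq\val_{V}(\eta)$, both in $\mathbb{N}$.

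I would first record that, as $u(a)=0$ forces $a$ to take values in $\mathbb{N}$, an index $i$ with $\val_{p}(b_{i})=\val_{p}(b)<0$ also satisfies $\val_{p}(a_{i}+b_{i})=\val_{p}(b)$ and realises the minimum, so $\val_{p}(a+b)=\val_{p}(b)$ and $u(a+b)=u(b)\neq0$. Then, using graded-commutativity and Lemma~\ref{technicallem} applied to the ordered pair $(b,J),(a,I)$ — which is legitimate because $u(b)\geqslant0=u(a)$ and $J_{0}\neq\emptyset$, the parameter $v$ of that lemma being $0$ — I would get
\[
e(\eta,a,I)e(\eta',b,J)=\pm\sum_{L\in P}e(s(L),a+b,L)\text{,}
\]
where $P$ is the set of partitions of $\Supp(a+b)$ of size $\#I+\#J$ and where $s\colon P\to W(k)$ satisfies $s(L)\in V^{\alpha+\beta+u(b)}(W(k))$ whenever $L_{0}=\emptyset$. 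As every summand has weight function $a+b$ with $u(a+b)\neq0$, the decomposition \eqref{decomposition} identifies $\left(e(\eta,a,I)e(\eta',b,J)\right)|_{\mathrm{d(frp)}}$ with $\pm\sum_{L\in P,\,L_{0}=\emptyset}e(s(L),a+b,L)$.

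It would then remain, by \eqref{zetaaddition}, to bound $\zeta_{\varepsilon}(e(s(L),a+b,L))$ for each $L\in P$ with $L_{0}=\emptyset$. For such $L$, the definition of $\zeta_{\varepsilon}$ together with the bound on $s(L)$ gives
\[
\zeta_{\varepsilon}(e(s(L),a+b,L))\geqslant2n(\alpha+\beta+u(b))+(\#I+\#J)u(b)-\varepsilon(\lvert a\rvert+\lvert b\rvert)\text{,}
\]
whereas $\zeta_{\varepsilon}(e(\eta,a,I))=2n\beta-\varepsilon\lvert a\rvert$ because $u(a)=0$, and $\zeta_{\varepsilon}(e(\eta',b,J))=2n\alpha+(\#J+1)u(b)-\varepsilon\lvert b\rvert$ because $J_{0}\neq\emptyset$. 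Subtracting, the desired inequality reduces to $(2n+\#I-1)u(b)\geqslant1$, which holds since $u(b)\geqslant1$ and $2n+\#I-1\geqslant1$.

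The main difficulty will be to recognise that Proposition~\ref{mainprop} is not sharp enough here: its $L_{0}=\emptyset$ estimate $V^{\alpha+\beta+\min\{u(a),u(b)\}}(W(k))$ degenerates to $V^{\alpha+\beta}(W(k))$ once $u(a)=0$, leaving no room for the ``$+1$''. One must instead return to Lemma~\ref{technicallem}, where the exponent $u(b)$ genuinely survives (and where, crucially, $v=0$); the ``$+1$'' in the statement is then exactly the slack in $(2n+\#I-1)u(b)\geqslant1$. A secondary, more routine point is the bookkeeping around \eqref{decomposition}: one must check that the components of the product lying in $d\!\left(W\Omega^{\mathrm{frp}}_{k[\underline{X}]/k}\right)$ are precisely the summands indexed by those $L$ with $L_{0}=\emptyset$.
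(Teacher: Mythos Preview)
Your argument is correct and follows the same route as the paper: reduce to a single pair of basic elements, apply Lemma~\ref{technicallem} with the fractional factor playing the role of the lemma's $(a,I)$ (so that $v=0$), read off the $V$-depth $\alpha+\beta+u(\text{fractional})$ on the $L_{0}=\emptyset$ summands, and conclude via the elementary inequality $(2n+\#I-1)u(b)\geqslant1$. Your write-up is in fact slightly more careful than the paper's (you spell out the graded-commutativity swap, the identification of the $\mathrm{d(frp)}$ part with the $L_{0}=\emptyset$ terms, and the case $n=0$); note also that the paper's line ``$u(a+b)=u(b)$'' is a typo for $u(a+b)=u(a)$ in its notation, which your version handles correctly.
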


\begin{proof}
By definition of integral and pure fractional elements, we know that for all $(a,I),(b,J)\in\mathcal{P}$ there exists $\eta_{a,I}',\eta_{b,J}\in W(k)$ such that:
\begin{gather*}
x=\sum_{\substack{(b,J)\in\mathcal{P}\\u(b)=0}}e(\eta_{b,J},b,J)\text{,}\\
y=\sum_{\substack{(a,I)\in\mathcal{P}\\u(a)>0\\I_{0}\neq\emptyset}}e(\eta'_{a,I},a,I)\text{.}
\end{gather*}

Then for any $(a,I),(b,J)\in\mathcal{P}$ such that $u(a)>0$, $I_{0}\neq\emptyset$ and $u(b)=0$, lemma \ref{technicallem} gives us:
\begin{multline*}
\zeta_{\varepsilon}\left((e(\eta_{b,J},b,J)e(\eta'_{a,I},a,I))|_{\mathrm{d(frp)}}\right)\\\geqslant2n\left(\val_{V}(\eta_{b,J})+\val_{V}(\eta_{a,I}')+u(a)\right)+(\#I+\#J)u(a+b)-\varepsilon\lvert a+b\rvert\text{.}
\end{multline*}

But $u(a+b)=u(b)$, so $\zeta_{\varepsilon}\left((e(\eta_{b,J},b,J)e(\eta'_{a,I},a,I))|_{\mathrm{d(frp)}}\right)\geqslant \zeta_{\varepsilon}(x)+\zeta_{\varepsilon}(y)+1$, as needed.
\end{proof}

\begin{prop}\label{frpintzeta}
For any $\varepsilon>0$, any $j\in\mathbb{N}$, any $x\in W\Omega_{k[\underline{X}]/k}^{\mathrm{int},j}$ and any $y\in W\Omega_{k[\underline{X}]/k}^{\mathrm{frp}}$ we get:
\begin{equation*}
\left(\zeta_{\varepsilon}(x)\neq-\infty\wedge\zeta_{\varepsilon}(y)\neq-\infty\right)\implies\zeta_{\varepsilon}\left((xy)|_{\mathrm{frp}}\right)\geqslant\zeta_{\varepsilon}(x)+\zeta_{\varepsilon}(y)+j\text{.}
\end{equation*}
\end{prop}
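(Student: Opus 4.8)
The plan is to follow the same pattern as in Propositions \ref{intintzeta} and \ref{frpintdfrpzeta}: expand $x$ and $y$ into basic elements via Theorem \ref{structuretheorem}, reduce to a term-by-term estimate using \eqref{zetaaddition} together with the $W(k)$-linearity of the projection $z\mapsto z|_{\mathrm{frp}}$, and then feed each product into the formula of Lemma \ref{technicallem}. Concretely, I would first write $x=\sum e(\eta_{b,J},b,J)$ with the sum over pairs $(b,J)$ satisfying $u(b)=0$ and $\#J=j$ (the degree-$j$ component of an integral element, since $e(\eta,b,J)$ lies in degree $\#J$), and $y=\sum e(\eta'_{a,I},a,I)$ with the sum over pairs $(a,I)$ satisfying $u(a)>0$ and $I_{0}\neq\emptyset$. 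It then suffices to bound $\zeta_{\varepsilon}$ of the pure fractional part of each single product $e(\eta_{b,J},b,J)e(\eta'_{a,I},a,I)$; one may assume both coefficients are nonzero, so that $\val_{V}(\eta_{b,J})$ and $\val_{V}(\eta'_{a,I})$ are finite.

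Fix such a pair. Since $u(a)>0=u(b)$ we have $\val_{p}(b)\geqslant 0>\val_{p}(a)$, hence $\val_{p}(a+b)=\val_{p}(a)$ and therefore $u(a+b)=u(a)\geqslant 1$; moreover the integer $v$ of Lemma \ref{technicallem} equals $u(b)=0$. I would then apply that lemma with $(a,I)$ in the role of the first factor, which is legitimate because $u(a)\geqslant u(b)$ and $I_{0}\neq\emptyset$: it produces $s\colon P\to W(k)$, where $P$ is the set of partitions of $\Supp(a+b)$ of size $\#I+\#J$, such that $e(\eta'_{a,I},a,I)e(\eta_{b,J},b,J)=\sum_{L\in P}e(s(L),a+b,L)$ with $s(L)\in V^{\val_{V}(\eta'_{a,I})+\val_{V}(\eta_{b,J})}(W(k))$ whenever $L_{0}\neq\emptyset$ (here $v=0$ and $u(a)-u(a+b)=0$). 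Reordering the two factors only introduces a sign, irrelevant for $\val_{V}$ and hence for $\zeta_{\varepsilon}$.

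Because $u(a+b)=u(a)>0$, the pure fractional part of the product is exactly $\sum_{L\in P,\,L_{0}\neq\emptyset}e(s(L),a+b,L)$, and each such $L$ has $\#L=\#I+\#J=\#I+j$. Evaluating $\zeta_{\varepsilon}(e(s(L),a+b,L))=2n\val_{V}(s(L))+(\#L+1)u(a+b)-\varepsilon\lvert a+b\rvert$, I would split $(\#I+j+1)u(a)$ as $(\#I+1)u(a)+j\,u(a)$ and regroup the estimate as $\big(2n\val_{V}(\eta_{b,J})-\varepsilon\lvert b\rvert\big)+\big(2n\val_{V}(\eta'_{a,I})+(\#I+1)u(a)-\varepsilon\lvert a\rvert\big)+j\,u(a)$. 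The first bracket is at least $\zeta_{\varepsilon}(x)$ because $u(b)=0$, the second bracket is at least $\zeta_{\varepsilon}(y)$ because $y$ is pure fractional, and $j\,u(a)\geqslant j$ since $u$ is $\mathbb{N}$-valued and $u(a)>0$. Passing to the infimum over all pairs and all admissible $L$ then gives $\zeta_{\varepsilon}((xy)|_{\mathrm{frp}})\geqslant\zeta_{\varepsilon}(x)+\zeta_{\varepsilon}(y)+j$.

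I do not expect a genuine obstacle: the argument is bookkeeping once Lemma \ref{technicallem} is in hand. The two points that require care are (i) invoking Lemma \ref{technicallem} with the correct factor in the ``$I_{0}\neq\emptyset$'' slot and verifying that $v=0$ and $u(a+b)=u(a)$ in this situation, and (ii) the elementary observation that a pure fractional weight function has $u(a)\geqslant 1$ — this is precisely what upgrades the bound from the naive $\zeta_{\varepsilon}(x)+\zeta_{\varepsilon}(y)$ to $\zeta_{\varepsilon}(x)+\zeta_{\varepsilon}(y)+j$.
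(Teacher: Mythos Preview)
Your proposal is correct and follows essentially the same approach as the paper's proof: both reduce to a term-by-term estimate, apply Lemma~\ref{technicallem} with the pure fractional factor in the ``$I_{0}\neq\emptyset$'' slot (noting $v=0$ and $u(a+b)=u(a)$), and then split $(\#I+\#J+1)u(a)$ to extract the extra $j\cdot u(a)\geqslant j$. Your write-up is simply more explicit about the bookkeeping (the sign from reordering, the verification of $v$ and $u(a+b)$) than the paper's terse version.
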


\begin{proof}
By definition of integral and pure fractional elements, we know that for all $(a,I),(b,J)\in\mathcal{P}$ there exists $\eta_{a,I}',\eta_{b,J}\in W(k)$ such that:
\begin{gather*}
x=\sum_{\substack{(b,J)\in\mathcal{P}\\u(b)=0\\\#J=j}}e(\eta_{b,J},b,J)\text{,}\\
y=\sum_{\substack{(a,I)\in\mathcal{P}\\u(a)>0\\I_{0}\neq\emptyset}}e(\eta'_{a,I},a,I)\text{.}
\end{gather*}

Using lemma \ref{technicallem}, we know that for any $(a,I),(b,J)\in\mathcal{P}$ such that $u(a)>0$, $I_{0}\neq\emptyset$ and $u(b)=0$ we have:
\begin{multline*}
\zeta_{\varepsilon}\left((e(\eta_{b,J},b,J)e(\eta'_{a,I},a,I))|_{\mathrm{frp}}\right)\\\geqslant2n\left(\val_{V}(\eta_{b,J})+\val_{v}(\eta_{a,I}')\right)+(\#I+\#J+1)u(a+b)-\varepsilon\lvert a+b\rvert\text{.}
\end{multline*}

Furthermore, notice that $u(a+b)=u(a)>0$. Therefore, we obtain that $\zeta_{\varepsilon}\left((e(\eta_{b,J},b,J)e(\eta'_{a,I},a,I))|_{\mathrm{frp}}\right)\geqslant\zeta_{\varepsilon}(x)+\zeta_{\varepsilon}(y)+\#J$, which ends this proof.
\end{proof}

\begin{prop}
For any $\varepsilon>0$, any $x\in W\Omega_{k[\underline{X}]/k}^{\mathrm{int}}$ and any $y\in W\Omega_{k[\underline{X}]/k}$ we have:
\begin{equation*}
\left(\zeta_{\varepsilon}(x)\neq-\infty\wedge\zeta_{\varepsilon}(y)\neq-\infty\right)\implies\zeta_{\varepsilon}(xy)\geqslant\zeta_{\varepsilon}(x)+\zeta_{\varepsilon}(y)\text{.}
\end{equation*}
\end{prop}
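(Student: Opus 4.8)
The plan is to reduce the general statement to the three special cases already proved. Write $y = y|_{\mathrm{int}} + y|_{\mathrm{frp}} + y|_{\mathrm{d(frp)}}$ according to the decomposition \eqref{decomposition}. By \eqref{zetaaddition}, it suffices to bound $\zeta_{\varepsilon}(x\cdot y|_{\mathrm{int}})$, $\zeta_{\varepsilon}(x\cdot y|_{\mathrm{frp}})$ and $\zeta_{\varepsilon}(x\cdot y|_{\mathrm{d(frp)}})$ from below by $\zeta_{\varepsilon}(x)+\zeta_{\varepsilon}(y)$; and by \eqref{zetaaddition} applied to $y$ itself we have $\zeta_{\varepsilon}(y) \leqslant \zeta_{\varepsilon}(y|_{\mathrm{int}})$, and similarly for the other two pieces, so it is enough to bound each $\zeta_{\varepsilon}(x\cdot y|_{\ast})$ by $\zeta_{\varepsilon}(x)+\zeta_{\varepsilon}(y|_{\ast})$.

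Next I would dispatch each of the three pieces. The product $x\cdot y|_{\mathrm{int}}$ lands in $W\Omega^{\mathrm{int}}_{k[\underline{X}]/k}$ (both factors are integral, and by Lemma~\ref{eeproductint} products of basic integral elements are again integral since $u(a+b)=0$ when $u(a)=u(b)=0$), so Proposition~\ref{intintzeta} applies directly. For $x\cdot y|_{\mathrm{frp}}$, decompose the product as $(x\cdot y|_{\mathrm{frp}})|_{\mathrm{frp}} + (x\cdot y|_{\mathrm{frp}})|_{\mathrm{d(frp)}} + (x\cdot y|_{\mathrm{frp}})|_{\mathrm{int}}$; the $\mathrm{frp}$-part is controlled by Proposition~\ref{frpintzeta} (with $j\geqslant 0$, giving at least $\zeta_{\varepsilon}(x)+\zeta_{\varepsilon}(y|_{\mathrm{frp}})$), the $\mathrm{d(frp)}$-part by Proposition~\ref{frpintdfrpzeta} (giving an even better bound, with a $+1$), and the $\mathrm{int}$-part must be shown to vanish: a product of an integral element with a pure fractional one cannot produce an integral summand, which again follows from Lemma~\ref{technicallem} since there $u(a+b)=u(b)$ is strictly positive whenever $u(a)>0$ is. Finally, for $x\cdot y|_{\mathrm{d(frp)}}$, write $y|_{\mathrm{d(frp)}} = d(z)$ with $z\in W\Omega^{\mathrm{frp}}_{k[\underline{X}]/k}$ and use the Leibniz rule \eqref{dbasicprop} to write $x\cdot d(z) = \pm d(xz) \mp d(x)\,z$ (up to signs depending on the degree of $x$); then apply \eqref{detzetavareps} together with the already-treated case of $x\cdot z$ (which is $x$ times a pure fractional element) and the case of $d(x)\cdot z$ — noting $d(x)$ is still integral by Proposition~\ref{dactionone} — and finally observe $\zeta_{\varepsilon}(d(z)) \leqslant \zeta_{\varepsilon}(z)$ in the wrong direction is not what we need, so instead compare $\zeta_{\varepsilon}(y|_{\mathrm{d(frp)}})$ with $\zeta_{\varepsilon}(z)$ directly from the definition of $\zeta_{\varepsilon}$ and Proposition~\ref{dactionone}, where passing from $z$ to $d(z)$ adds $\min(a)$ to the partition, changing $\#I$ but keeping $I_0=\emptyset$.

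The main obstacle I anticipate is the bookkeeping in the $x\cdot y|_{\mathrm{d(frp)}}$ case: one must carefully track how $\zeta_{\varepsilon}$ behaves under $d$ on a \emph{pure fractional} $z$ versus on the integral element $x$, because in \eqref{detzetavareps} the inequality goes the convenient way only by accident of the $\#I$ versus $\#I+1$ coefficients, and here we are feeding $d$ applied to one factor back into a product. Concretely, for $z$ pure fractional with $u(a)>0$ and $I_0=\emptyset$, a summand $e(\eta,a,I)$ of $z$ with $I_0 = \emptyset$ contributes $2n\val_V(\eta) + \#I\,u(a) - \varepsilon|a|$ to $\zeta_\varepsilon(z)$, while the corresponding summand of $y|_{\mathrm{d(frp)}}$ has $I_0 \neq \emptyset$ and contributes $2n\val_V(\eta) + (\#I)\,u(a) - \varepsilon|a|$ (the partition grows by one but the coefficient jumps from $\#I$ to $(\#I+1-1)$), so in fact $\zeta_\varepsilon(y|_{\mathrm{d(frp)}}) \geqslant \zeta_\varepsilon(z)$; combined with the previously established bound $\zeta_\varepsilon(xz) \geqslant \zeta_\varepsilon(x) + \zeta_\varepsilon(z)$ and $\zeta_\varepsilon(d(xz)) \geqslant \zeta_\varepsilon(xz)$ this should close the argument, but verifying that no integral or differently-shaped summands are silently lost in the Leibniz expansion is the delicate point and will require re-examining which projections the terms $d(xz)$ and $d(x)z$ actually hit.
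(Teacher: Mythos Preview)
Your approach is essentially the same as the paper's: decompose $y$, dispose of $x\cdot y|_{\mathrm{int}}$ and $x\cdot y|_{\mathrm{frp}}$ via Propositions~\ref{intintzeta}, \ref{frpintdfrpzeta}, \ref{frpintzeta} (noting via Lemma~\ref{technicallem} that $x\cdot y|_{\mathrm{frp}}$ is fractional), and for the $d(\mathrm{frp})$ piece write $y|_{\mathrm{d(frp)}}=d(y')$ with $y'$ pure fractional and use Leibniz together with the already-treated case of an integral times a pure fractional.

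One correction to your last paragraph. You have the roles of $I_0=\emptyset$ and $I_0\neq\emptyset$ swapped: a summand $e(\eta,a,I)$ of the \emph{pure fractional} element $z$ has $I_0\neq\emptyset$, contributing $2n\val_V(\eta)+(\#I+1)u(a)-\varepsilon|a|$; applying $d$ (Proposition~\ref{dactionone}, case $\val_p(a)\leqslant 0$) replaces $I$ by $I\cup\{\min(a)\}$, which has empty $I_0$ and size $\#I+1$, contributing $2n\val_V(\eta)+(\#I+1)u(a)-\varepsilon|a|$ again. So in fact $\zeta_\varepsilon(z)=\zeta_\varepsilon(y|_{\mathrm{d(frp)}})$, exactly as the paper asserts. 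This matters because the inequality you wrote, $\zeta_\varepsilon(y|_{\mathrm{d(frp)}})\geqslant\zeta_\varepsilon(z)$, is the wrong direction for closing the argument: from $\zeta_\varepsilon(d(xz))\geqslant\zeta_\varepsilon(x)+\zeta_\varepsilon(z)$ you need $\zeta_\varepsilon(z)\geqslant\zeta_\varepsilon(y|_{\mathrm{d(frp)}})$. Equality saves you, and with that fix the proof is complete.
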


\begin{proof}
Notice that it is sufficient to prove this in the case where $x,y\in W\Omega_{k[\underline{X}]/k}^{i}$, for some $i\in\mathbb{N}$. Recall that $y=y|_{\mathrm{int}}+y|_{\mathrm{frp}}+y|_{\mathrm{d(frp)}}$, and notice that $\zeta_{\varepsilon}(y)\geqslant\zeta_{\varepsilon}\left(y|_{\mathrm{int}}\right)$, $\zeta_{\varepsilon}(y)\geqslant\zeta_{\varepsilon}\left(y|_{\mathrm{frp}}\right)$ and $\zeta_{\varepsilon}(y)\geqslant\zeta_{\varepsilon}\left(y|_{\mathrm{d(frp)}}\right)$. Using lemma \ref{technicallem}, we check that $xy|_{\mathrm{frp}}\in W\Omega_{k[\underline{X}]/k}^{\mathrm{frac}}$. Therefore, using propositions \ref{intintzeta}, \ref{frpintdfrpzeta} and \ref{frpintzeta} as well as \eqref{zetaaddition}, we see that we only need to show that $\zeta_{\varepsilon}(xy|_{\mathrm{d(frp)}})\geqslant\zeta_{\varepsilon}(x)+\zeta_{\varepsilon}\left(y|_{\mathrm{d(frp)}}\right)$.

Let $y'\in W\Omega_{k[\underline{X}]/k}^{\mathrm{frp},i-1}$ be an element such that $d(y')=y|_{\mathrm{d(frp)}}$. Using proposition \ref{dactionone} we get $\zeta_{\varepsilon}(y')=\zeta_{\varepsilon}(y|_{\mathrm{frp}})$. But $xy|_{\mathrm{d(frp)}}=(-1)^{i}\left(d(xy')-d(x)y'\right)$, so one can conclude using \eqref{zetaaddition}, \eqref{detzetavareps} as well as propositions \ref{frpintdfrpzeta} and \ref{frpintzeta}.
\end{proof}

\begin{prop}
For any $\varepsilon>0$ and any $x,y\in W\Omega_{k[\underline{X}]/k}^{\mathrm{frp}}$ we get:
\begin{equation*}
\left(\zeta_{\varepsilon}(x)\neq-\infty\wedge\zeta_{\varepsilon}(y)\neq-\infty\right)\implies\zeta_{\varepsilon}((xy)|_{\mathrm{frp}})\geqslant\zeta_{\varepsilon}(x)+\zeta_{\varepsilon}(y)+1\text{.}
\end{equation*}
\end{prop}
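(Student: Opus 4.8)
The plan is to use Theorem~\ref{structuretheorem} to write $x=\sum e(\eta_{a,I},a,I)$ and $y=\sum e(\eta'_{b,J},b,J)$, the sums running over the pairs defining pure fractional elements (those with $u(a)>0$ and $I_0\neq\emptyset$, respectively $u(b)>0$ and $J_0\neq\emptyset$), and then, using \eqref{zetaaddition} and the additivity of $e$ in its first argument, to reduce to the termwise estimate
\[
\zeta_\varepsilon\left((e(\eta,a,I)e(\eta',b,J))|_{\mathrm{frp}}\right)\geqslant\zeta_\varepsilon(e(\eta,a,I))+\zeta_\varepsilon(e(\eta',b,J))+1
\]
for every such $(a,I),(b,J)$ and all $\eta,\eta'\in W(k)$. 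Indeed $\zeta_\varepsilon(e(\eta,a,I))\geqslant\zeta_\varepsilon(x)$ and $\zeta_\varepsilon(e(\eta'_{b,J},b,J))\geqslant\zeta_\varepsilon(y)$ for every term, and the canonical expansion of $(xy)|_{\mathrm{frp}}$ is obtained by summing the frp-projections of these products, so \eqref{zetaaddition} then yields the proposition. Since $W\Omega_{k[\underline{X}]/k}$ is graded-commutative and $\zeta_\varepsilon(z)=\zeta_\varepsilon(-z)$ (immediate, as $e(-\eta,a,I)=-e(\eta,a,I)$ and $\val_V(-\eta)=\val_V(\eta)$), I may assume $u(a)\geqslant u(b)$ when proving the termwise estimate.

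For such a pair I would invoke lemma~\ref{technicallem} with $\alpha\coloneqq\val_V(\eta)$, $\beta\coloneqq\val_V(\eta')$ and $v\coloneqq u(b)$ (legitimate because $J_0\neq\emptyset$): it writes $e(\eta,a,I)e(\eta',b,J)=\sum_{L\in P}e(s(L),a+b,L)$, where $P$ is the set of partitions of $\Supp(a+b)$ of size $\#I+\#J$, and $\val_V(s(L))\geqslant\alpha+\beta+u(a)+u(b)-u(a+b)$ as soon as $L_0\neq\emptyset$. Using the shape of the decomposition \eqref{decomposition} and proposition~\ref{dactionone}, the summands belonging to $W\Omega_{k[\underline{X}]/k}^{\mathrm{frp}}$ are exactly those with $u(a+b)>0$ and $L_0\neq\emptyset$; for such an $L$, since $\lvert a+b\rvert=\lvert a\rvert+\lvert b\rvert$ and $\#L=\#I+\#J$,
\[
\zeta_\varepsilon(e(s(L),a+b,L))\geqslant 2n(\alpha+\beta+u(a)+u(b)-u(a+b))+(\#I+\#J+1)u(a+b)-\varepsilon(\lvert a\rvert+\lvert b\rvert)\text{.}
\]

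What remains is a numerical inequality. Writing $P_0\coloneqq u(a)=\max\{u(a),u(b)\}$, $Q_0\coloneqq u(b)=\min\{u(a),u(b)\}$ and $R\coloneqq u(a+b)$, one has $Q_0\geqslant 1$, $R\geqslant 1$ for the summands at hand, $R\leqslant P_0$ because $\val_p(a+b)\geqslant\min\{\val_p(a),\val_p(b)\}$, and $\#I+\#J\leqslant\#\Supp(a+b)\leqslant n$. Subtracting $2n\alpha+2n\beta-\varepsilon(\lvert a\rvert+\lvert b\rvert)$ from both sides, the termwise estimate amounts to
\[
2n(Q_0+P_0-R)+(\#I+\#J+1)R\geqslant(\#I+1)P_0+(\#J+1)Q_0+1\text{.}
\]
The left-hand side is affine in $R$ with slope $\#I+\#J+1-2n\leqslant 1-n\leqslant 0$ (the case $n=0$ being vacuous, as there is then no nonzero pure fractional element), so it is minimal at $R=P_0$, where it equals $2nQ_0+(\#I+\#J+1)P_0$; the inequality then reduces to $(2n-\#J-1)Q_0+\#J\,P_0\geqslant 1$, which holds because $Q_0\geqslant 1$, $P_0\geqslant Q_0$, $\#J\geqslant 0$ and $2n-\#J-1\geqslant n-1\geqslant 0$.

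The delicate point is exactly this last step. It forces one to use the bound of lemma~\ref{technicallem} in its sharp form, since the uniform bound of proposition~\ref{mainprop} is too weak here, and, crucially, to keep the partition size $\#I$ attached to $u(a)$ and $\#J$ to $u(b)$ rather than estimating both by $\max\{u(a),u(b)\}$: with that cruder estimate the inequality genuinely fails. It is the factor $2n$ built into the definition of $\zeta_\varepsilon$, surviving as the term $(2n-\#J-1)Q_0$, that supplies precisely the needed slack.
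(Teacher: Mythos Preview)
Your proof is correct and follows essentially the same approach as the paper: reduce to a termwise estimate, apply lemma~\ref{technicallem} with $v=u(b)$ (since $J_0\neq\emptyset$), and then verify the numerical inequality $2n(u(a)+u(b)-u(a+b))+(\#I+\#J+1)u(a+b)\geqslant(\#I+1)u(a)+(\#J+1)u(b)+1$. Your handling of this last step via the affine-in-$R$ minimisation is a mild repackaging of the paper's direct substitution $u(a+b)\leqslant\max\{u(a),u(b)\}$, and your closing observation that proposition~\ref{mainprop} alone would not suffice (the extra $\min\{u(a),u(b)\}$ from $v=u(b)$ is essential) is exactly the point.
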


\begin{proof}
It is enough to prove that for any $(a,I),(b,J)\in\mathcal{P}$ with $u(a)\neq0$, $I_{0}\neq\emptyset$, $u(b)\neq0$ and $J_{0}\neq\emptyset$, and any Witt vectors $\eta_{a,I},\eta_{b,J}\in W(k)$ we have the inequality $\zeta_{\varepsilon}\left(\left(e(\eta_{a,I},a,I)e(\eta_{b,J},b,J)\right)|_{\mathrm{frp}}\right)\geqslant\zeta_{\varepsilon}(e(\eta_{a,I},a,I))+\zeta_{\varepsilon}(e(\eta_{b,J},b,J))+1$. A consequence of lemma \ref{technicallem} is that:
\begin{multline*}
\zeta_{\varepsilon}\left(\left(e(\eta_{a,I},a,I)e(\eta_{b,J},b,J)\right)|_{\mathrm{frp}}\right)\\\geqslant2n\left(\val_{V}(\eta_{a,I})+\val_{V}(\eta_{b,J})+u(a)+u(b)-u(a+b)\right)\\+(\#I+\#J+1)u(a+b)-\varepsilon\lvert a+b\rvert\text{.}
\end{multline*}

Therefore, one can conclude if one has:
\begin{equation*}
2n\left(u(a)+u(b)-u(a+b)\right)+(\#I+\#J+1)u(a+b)\geqslant(\#I+1)u(a)+(\#J+1)u(b)+1\text{.}
\end{equation*}

Notice that $\#I+1\leqslant n$ and $\#J+1\leqslant n$ because we supposed that $I_{0}\neq\emptyset$ and $J_{0}\neq\emptyset$. Since $u(a+b)\leqslant\max\{u(a),u(b)\}$, we get:
\begin{multline*}
2n\left(u(a)+u(b)\right)+(\#I+\#J+1-2n)u(a+b)\\\geqslant2n\min\{u(a),u(b)\}+(\#I+\#J+1)\max\{u(a),u(b)\}\text{.}
\end{multline*}

This ends the proof whenever $n\neq0$. If $n=0$, there is nothing to show.
\end{proof}

\begin{prop}\label{frpdfrpfrpzeta}
Let $\varepsilon>0$, $x\in W\Omega_{k[\underline{X}]/k}^{\mathrm{frp}}$ and $y\in d\left(W\Omega_{k[\underline{X}]/k}^{\mathrm{frp}}\right)$. We have:
\begin{equation*}
\left(\zeta_{\varepsilon}(x)\neq-\infty\wedge\zeta_{\varepsilon}(y)\neq-\infty\right)\implies\zeta_{\varepsilon}((xy)|_{\mathrm{frp}})\geqslant\zeta_{\varepsilon}(x)+\zeta_{\varepsilon}(y)\text{.}
\end{equation*}
\end{prop}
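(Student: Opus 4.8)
The plan is to follow the same strategy as in the preceding propositions: reduce the claim to an inequality for the product of two basic elements, apply proposition \ref{mainprop}, and finish with an elementary numerical estimate.

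First I would write $x=\sum_{(a,I)\in\mathcal{P}}e(\eta_{a,I},a,I)$ with $u(a)>0$ and $I_{0}\neq\emptyset$ for every term, and choose $y'=\sum_{(b,J)\in\mathcal{P}}e(\eta'_{b,J},b,J)$ in $W\Omega_{k[\underline{X}]/k}^{\mathrm{frp}}$, so $u(b)>0$ and $J_{0}\neq\emptyset$, with $d(y')=y$. Since $\val_{p}(b)<0$, proposition \ref{dactionone} gives $d(e(\eta'_{b,J},b,J))=e(\eta'_{b,J},b,J\cup\{\min(b)\})$, the partition $J\cup\{\min(b)\}$ has empty degree-zero part, and the resulting expression for $y$ is its decomposition from theorem \ref{structuretheorem}; in particular $\zeta_{\varepsilon}(e(\eta'_{b,J},b,J\cup\{\min(b)\}))=2n\val_{V}(\eta'_{b,J})+(\#J+1)u(b)-\varepsilon\lvert b\rvert$. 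Using \eqref{zetaaddition} and the fact that $\zeta_{\varepsilon}$ is an infimum over that decomposition, it is then enough to prove, for all such $(a,I)$ and $(b,J)$, that
\[\zeta_{\varepsilon}\big((e(\eta_{a,I},a,I)\,e(\eta'_{b,J},b,J\cup\{\min(b)\}))|_{\mathrm{frp}}\big)\geqslant\zeta_{\varepsilon}(e(\eta_{a,I},a,I))+\zeta_{\varepsilon}(e(\eta'_{b,J},b,J\cup\{\min(b)\}))\text{.}\]

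To obtain this, I would apply proposition \ref{mainprop} to $e(\eta_{a,I},a,I)\,e(\eta'_{b,J},b,J\cup\{\min(b)\})$, which is legitimate because $I_{0}\neq\emptyset$: with $\alpha=\val_{V}(\eta_{a,I})$ and $\beta=\val_{V}(\eta'_{b,J})$ the product equals $\sum_{L\in P}e(s(L),a+b,L)$, where $P$ is the set of partitions of $\Supp(a+b)$ of size $\#I+\#J+1$ and $s(L)\in V^{\alpha+\beta+\max\{u(a),u(b)\}-u(a+b)}(W(k))$ whenever $L_{0}\neq\emptyset$. If $u(a+b)=0$ the projection $(e(\eta_{a,I},a,I)\,e(\eta'_{b,J},b,J\cup\{\min(b)\}))|_{\mathrm{frp}}$ vanishes and there is nothing to do; otherwise, using $\lvert a+b\rvert=\lvert a\rvert+\lvert b\rvert$, the inequality above reduces to the numerical statement
\[2n\big(\max\{u(a),u(b)\}-u(a+b)\big)+(\#I+\#J+2)\,u(a+b)\geqslant(\#I+1)\,u(a)+(\#J+1)\,u(b)\text{.}\]

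It remains to prove this numerical inequality, which I expect to be the only step requiring a small argument. Because $I_{0}\neq\emptyset$ we have $\min(a)\notin I$, so $\#I+1\leqslant n$, and likewise $\#J+1\leqslant n$; moreover $u(a+b)\leqslant\max\{u(a),u(b)\}$. Assuming without loss of generality that $u(a)\geqslant u(b)$, one bounds $(\#I+1)u(a)+(\#J+1)u(b)\leqslant(\#I+\#J+2)u(a)$, so it suffices to check $2n(u(a)-u(a+b))\geqslant(\#I+\#J+2)(u(a)-u(a+b))$, which holds since $\#I+\#J+2\leqslant 2n$ and $u(a)\geqslant u(a+b)$; the case $n=0$ is trivial since then the de Rham-Witt complex is concentrated in degree zero. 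The main points to watch are that $\zeta_{\varepsilon}$ must be evaluated with the correct formula on the basic constituents of $y$, whose partitions all have empty degree-zero part, and that one has to invoke proposition \ref{mainprop} rather than lemma \ref{technicallem}, since we have no control over the sign of $u(a)-u(b)$.
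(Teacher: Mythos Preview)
Your proof is correct and follows essentially the same route as the paper: reduce to basic elements, apply proposition \ref{mainprop} (exploiting $I_{0}\neq\emptyset$), and finish with the elementary bound coming from $\#I+1\leqslant n$, $\#J+1\leqslant n$ and $u(a+b)\leqslant\max\{u(a),u(b)\}$. The only cosmetic difference is that the paper writes the basic constituents of $y$ directly as $e(\eta_{b,J},b,J)$ with $J_{0}=\emptyset$, whereas you first pick a preimage $y'$ and then apply $d$; after the relabelling $J\mapsto J\cup\{\min(b)\}$ your numerical inequality and the paper's coincide.
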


\begin{proof}
We only have to demonstrate that for any $(a,I),(b,J)\in\mathcal{P}$ with $u(a)\neq0$, $I_{0}\neq\emptyset$, $u(b)\neq0$ and $J_{0}=\emptyset$, and any Witt vectors $\eta_{a,I},\eta_{b,J}\in W(k)$ we have the inequality $\zeta_{\varepsilon}\left(\left(e(\eta_{a,I},a,I)e(\eta_{b,J},b,J)\right)|_{\mathrm{frp}}\right)\geqslant\zeta_{\varepsilon}(e(\eta_{a,I},a,I))+\zeta_{\varepsilon}(e(\eta_{b,J},b,J))$. Using proposition \ref{mainprop}, one checks that:
\begin{multline*}
\zeta_{\varepsilon}\left(\left(e(\eta_{a,I},a,I)e(\eta_{b,J},b,J)\right)|_{\mathrm{frp}}\right)\\\geqslant2n\left(\val_{V}(\eta_{a,I})+\val_{V}(\eta_{b,J})+\max\{u(a),u(b)\}-u(a+b)\right)\\+(\#I+\#J+1)u(a+b)-\varepsilon\lvert a+b\rvert\text{.}
\end{multline*}

So we can conclude if we show:
\begin{equation*}
2n\left(\max\{u(a),u(b)\}-u(a+b)\right)+(\#I+\#J+1)u(a+b)\geqslant(\#I+1)u(a)+\#Ju(b)\text{.}
\end{equation*}

Notice that $\#I+1\leqslant n$ because we have supposed that $I_{0}\neq\emptyset$, and $\#J\leqslant n$. As $u(a+b)\leqslant\max\{u(a),u(b)\}$, we see that:
\begin{equation*}
2n\max\{u(a),u(b)\}+(\#I+\#J+1-2n)u(a+b)\geqslant(\#I+\#J+1)\max\{u(a),u(b)\}\text{.}
\end{equation*}
\end{proof}

\begin{prop}
For any $\varepsilon>0$ and any $x,y\in W\Omega_{k[\underline{X}]/k}^{\mathrm{frp}}$ we have:
\begin{equation*}
\left(\zeta_{\varepsilon}(x)\neq-\infty\wedge\zeta_{\varepsilon}(y)\neq-\infty\right)\implies\zeta_{\varepsilon}((xy)|_{\mathrm{d(frp)}})\geqslant\zeta_{\varepsilon}(x)+\zeta_{\varepsilon}(y)+3\text{.}
\end{equation*}
\end{prop}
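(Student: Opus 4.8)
Following the strategy of the preceding propositions, by bilinearity of the product and \eqref{zetaaddition} it is enough to prove that, for all $(a,I),(b,J)\in\mathcal{P}$ with $u(a)\neq0$, $I_{0}\neq\emptyset$, $u(b)\neq0$, $J_{0}\neq\emptyset$ and all $\eta_{a,I},\eta_{b,J}\in W(k)$, one has
\[
\zeta_{\varepsilon}\bigl((e(\eta_{a,I},a,I)e(\eta_{b,J},b,J))|_{\mathrm{d(frp)}}\bigr)\geqslant\zeta_{\varepsilon}(e(\eta_{a,I},a,I))+\zeta_{\varepsilon}(e(\eta_{b,J},b,J))+3\text{.}
\]
Exchanging the two factors changes neither side, so we may assume $u(a)\geqslant u(b)$.

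By Proposition \ref{dactionone} and the definitions of the integral and pure fractional parts, a basic element $e(\mu,c,L)$ contributes to the $\mathrm{d(frp)}$-component of \eqref{decomposition} exactly when $u(c)\neq0$ and $L_{0}=\emptyset$. Since $I_{0}\neq\emptyset$ and $J_{0}\neq\emptyset$, Lemma \ref{technicallem} (with $v=u(b)$, $\alpha=\val_{V}(\eta_{a,I})$, $\beta=\val_{V}(\eta_{b,J})$) writes $e(\eta_{a,I},a,I)e(\eta_{b,J},b,J)=\sum_{L}e(s(L),a+b,L)$, where $L$ ranges over the partitions of $\Supp(a+b)$ of size $\#I+\#J$, and $s(L)\in V^{\val_{V}(\eta_{a,I})+\val_{V}(\eta_{b,J})+u(a)+u(b)}(W(k))$ whenever $L_{0}=\emptyset$. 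Thus the left-hand side above is $+\infty$ if $u(a+b)=0$, leaving nothing to prove, and otherwise, using $\#L=\#I+\#J$ and $\lvert a+b\rvert=\lvert a\rvert+\lvert b\rvert$, it is at least
\[
2n\bigl(\val_{V}(\eta_{a,I})+\val_{V}(\eta_{b,J})+u(a)+u(b)\bigr)+(\#I+\#J)u(a+b)-\varepsilon\lvert a+b\rvert\text{.}
\]
As $\zeta_{\varepsilon}(e(\eta_{a,I},a,I))=2n\val_{V}(\eta_{a,I})+(\#I+1)u(a)-\varepsilon\lvert a\rvert$ and similarly for $(b,J)$, the desired inequality reduces to
\[
2n\bigl(u(a)+u(b)\bigr)+(\#I+\#J)u(a+b)\geqslant(\#I+1)u(a)+(\#J+1)u(b)+3\text{.}
\]

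Recall now that $I_{0}\neq\emptyset$ forces $\#I+1\leqslant n$, hence $2n-\#I-1\geqslant n$, and likewise $2n-\#J-1\geqslant n$; combined with $u(a),u(b)\geqslant1$ and $(\#I+\#J)u(a+b)\geqslant0$, the difference of the two sides is at least $2n-3$, which is nonnegative as soon as $n\geqslant2$. The cases $n\leqslant1$ are degenerate: if $n=0$ then $W\Omega^{\mathrm{frp}}_{k[\underline{X}]/k}=0$, and if $n=1$ the hypothesis $I_{0}\neq\emptyset$ forces $\#I=0$, and likewise $\#J=0$, so by Proposition \ref{mainprop} any product of two pure fractional basic elements equals $e(s(\emptyset),a+b,\emptyset)$, whose unique partition $L=\emptyset$ of $\Supp(a+b)$ satisfies $L_{0}=\Supp(a+b)\neq\emptyset$; hence $(xy)|_{\mathrm{d(frp)}}=0$ and the statement holds trivially.

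The one delicate point is the choice of multiplicativity formula: Proposition \ref{mainprop} alone would only give $s(L)\in V^{\val_{V}(\eta_{a,I})+\val_{V}(\eta_{b,J})+\min\{u(a),u(b)\}}(W(k))$ for $L_{0}=\emptyset$, which is too weak, the numerical inequality above failing when $u(a)$ is much larger than $u(b)$. It is precisely because \emph{both} $I_{0}$ and $J_{0}$ are nonempty that we are under the hypotheses of Lemma \ref{technicallem} and obtain the stronger exponent $u(a)+u(b)$. Everything else is the bookkeeping already used above, and the slack $2n-3$ explains both the constant $3$ in the statement and why $n=1$ needs a separate, trivial, argument.
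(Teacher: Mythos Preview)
Your proof is correct and follows the same strategy as the paper: reduce to basic elements, apply Lemma~\ref{technicallem} (using that both $I_0$ and $J_0$ are nonempty to get the exponent $u(a)+u(b)$), and then verify the resulting numerical inequality. The only difference is in the endgame for that inequality: the paper keeps the term $(\#I+\#J)u(a+b)$ and uses $u(a+b)\geqslant1$ on the $\mathrm{d(frp)}$ part to obtain slack $2+\#I+\#J$, which forces a separate treatment of the degree-zero case $\#I=\#J=0$; you instead discard $(\#I+\#J)u(a+b)\geqslant0$ to obtain slack $2n-3$, which forces a separate treatment of $n\leqslant1$. Both degenerate cases are handled by the same observation that a product of two Witt vectors has trivial $\mathrm{d(frp)}$-projection, so the two arguments are really the same proof with a different bookkeeping choice at the last step.
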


\begin{proof}
We only have to check that for any $(a,I),(b,J)\in\mathcal{P}$ such that $u(a)\neq0$, $I_{0}\neq\emptyset$, $u(b)\neq0$ and $J_{0}\neq\emptyset$, and any Witt vectors $\eta_{a,I},\eta_{b,J}\in W(k)$ we have the inequality $\zeta_{\varepsilon}\left(\left(e(\eta_{a,I},a,I)e(\eta_{b,J},b,J)\right)|_{\mathrm{d(frp)}}\right)\geqslant\zeta_{\varepsilon}(e(\eta_{a,I},a,I))+\zeta_{\varepsilon}(e(\eta_{b,J},b,J))+3$. Due to lemma \ref{technicallem} we see that:
\begin{multline*}
\zeta_{\varepsilon}\left(\left(e(\eta_{a,I},a,I)e(\eta_{b,J},b,J)\right)|_{\mathrm{d(frp)}}\right)\\\geqslant2n\left(\val_{V}(\eta_{a,I})+\val_{V}(\eta_{b,J})+u(a)+u(b)\right)+(\#I+\#J)u(a+b)-\varepsilon\lvert a+b\rvert\text{.}
\end{multline*}

Therefore, the proof is complete if:
\begin{equation*}
2n\left(u(a)+u(b)\right)+(\#I+\#J)u(a+b)\geqslant(\#I+1)u(a)+(\#J+1)u(b)+3\text{.}
\end{equation*}

In the fractional part that we are studying, we necessarily have $u(a)>0$, $u(b)>0$ and $u(a+b)>0$. Moreover, $\#I+1\leqslant n$ and $\#J+1\leqslant n$ as we have supposed that $I_{0}\neq\emptyset$ and $J_{0}\neq\emptyset$. So we get:
\begin{equation*}
2n\left(u(a)+u(b)\right)+(\#I+\#J)u(a+b)\geqslant(\#I+1)u(a)+(\#J+1)u(b)+2+\#I+\#J\text{.}
\end{equation*}

If $\#I+\#J\neq0$, the proof is complete. Otherwise, it means that we are multiplying two Witt vectors. In particular, the projection on $d\left(W\Omega_{k[\underline{X}]/k}^{\mathrm{frp}}\right)$ is $0$, but $\zeta_{\varepsilon}(0)=+\infty$ so the proof becomes obvious.
\end{proof}

\begin{prop}
For any $\varepsilon>0$ and any $x,y\in W\Omega_{k[\underline{X}]/k}^{\mathrm{frp}}$ we have:
\begin{equation*}
\left(\zeta_{\varepsilon}(x)\neq-\infty\wedge\zeta_{\varepsilon}(y)\neq-\infty\right)\implies\zeta_{\varepsilon}((xy)|_{\mathrm{int}})\geqslant\zeta_{\varepsilon}(x)+\zeta_{\varepsilon}(y)+2\text{.}
\end{equation*}
\end{prop}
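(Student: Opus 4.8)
The plan is to mimic the exhaustion argument of the preceding propositions. By bilinearity of the product and \eqref{zetaaddition}, it suffices to handle a single pair of basic elements: fix $(a,I),(b,J)\in\mathcal{P}$ with $u(a)\neq0$, $I_{0}\neq\emptyset$, $u(b)\neq0$, $J_{0}\neq\emptyset$ and $\eta_{a,I},\eta_{b,J}\in W(k)$, and prove
\[
\zeta_{\varepsilon}\bigl((e(\eta_{a,I},a,I)e(\eta_{b,J},b,J))|_{\mathrm{int}}\bigr)\geqslant\zeta_{\varepsilon}(e(\eta_{a,I},a,I))+\zeta_{\varepsilon}(e(\eta_{b,J},b,J))+2\text{.}
\]
The hypotheses on $(a,I)$ and $(b,J)$ being symmetric and $\zeta_{\varepsilon}$ insensitive to signs, I may assume $u(a)\geqslant u(b)$. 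By lemma \ref{technicallem} the product is a sum of terms $e(s(L),a+b,L)$, all carrying the weight function $a+b$; hence $(e(\eta_{a,I},a,I)e(\eta_{b,J},b,J))|_{\mathrm{int}}$ is zero --- in which case there is nothing to prove, as $\zeta_{\varepsilon}(0)=+\infty$ --- unless $u(a+b)=0$, which I assume from now on.

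Next I apply lemma \ref{technicallem} with $\alpha=\val_{V}(\eta_{a,I})$, $\beta=\val_{V}(\eta_{b,J})$ and $v=u(b)$ (legitimate as $J_{0}\neq\emptyset$). Because $u(a+b)=0$, both alternatives of its conclusion give $s(L)\in V^{\alpha+\beta+u(a)+u(b)}(W(k))$, and moreover $\zeta_{\varepsilon}(e(s(L),a+b,L))=2n\val_{V}(s(L))-\varepsilon\lvert a+b\rvert$ since the $u(a+b)$-term drops out. Together with \eqref{zetaaddition} this gives
\[
\zeta_{\varepsilon}\bigl((e(\eta_{a,I},a,I)e(\eta_{b,J},b,J))|_{\mathrm{int}}\bigr)\geqslant2n\bigl(\val_{V}(\eta_{a,I})+\val_{V}(\eta_{b,J})+u(a)+u(b)\bigr)-\varepsilon\lvert a+b\rvert\text{.}
\]
Since $I_{0}\neq\emptyset$ and $J_{0}\neq\emptyset$, one has $\zeta_{\varepsilon}(e(\eta_{a,I},a,I))=2n\val_{V}(\eta_{a,I})+(\#I+1)u(a)-\varepsilon\lvert a\rvert$ and analogously for the other factor, so the desired inequality reduces to the numerical statement
\[
2n\bigl(u(a)+u(b)\bigr)\geqslant(\#I+1)u(a)+(\#J+1)u(b)+2\text{.}
\]

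For this last step I use that $\#I+1\leqslant n$ and $\#J+1\leqslant n$ (because $I_{0}\neq\emptyset$ and $J_{0}\neq\emptyset$) and that $u(a),u(b)\geqslant1$ (because they are nonzero), so that $(\#I+1)u(a)+(\#J+1)u(b)+2\leqslant n\bigl(u(a)+u(b)\bigr)+\bigl(u(a)+u(b)\bigr)=(n+1)\bigl(u(a)+u(b)\bigr)\leqslant2n\bigl(u(a)+u(b)\bigr)$, the last inequality holding whenever $n\geqslant1$; if $n=0$ then $W\Omega_{k[\underline{X}]/k}^{\mathrm{frp}}=0$ and there is nothing to show. I do not anticipate a genuine obstacle: the mechanism is identical to that of propositions \ref{frpintzeta} and \ref{frpdfrpfrpzeta}, and the only point needing care is the observation that the integral part of a product of basic pure fractional elements is nonzero only when $u(a+b)=0$ --- exactly the regime in which both ``lost'' copies $u(a)$ and $u(b)$ of the denominator exponent are absorbed into the $2n\val_{V}$ term, leaving ample slack for the additional $+2$.
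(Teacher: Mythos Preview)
Your proof is correct and follows essentially the same route as the paper: reduce to basic elements, observe that the integral projection forces $u(a+b)=0$, apply lemma \ref{technicallem} with $v=u(b)$ to get the bound $2n(\val_V(\eta_{a,I})+\val_V(\eta_{b,J})+u(a)+u(b))-\varepsilon\lvert a+b\rvert$, and then conclude from $\#I+1\leqslant n$, $\#J+1\leqslant n$ and $u(a),u(b)\geqslant1$. Your treatment is in fact slightly more explicit than the paper's (you spell out the WLOG symmetry step and the $n=0$ case), but there is no substantive difference.
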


\begin{proof}
It is enough to prove that for any $(a,I),(b,J)\in\mathcal{P}$ such that $u(a)\neq0$, $I_{0}\neq\emptyset$, $u(b)\neq0$ and $J_{0}\neq\emptyset$, and for any Witt vectors $\eta_{a,I},\eta_{b,J}\in W(k)$ we have the inequality $\zeta_{\varepsilon}\left(\left(e(\eta_{a,I},a,I)e(\eta_{b,J},b,J)\right)|_{\mathrm{int}}\right)\geqslant\zeta_{\varepsilon}(e(\eta_{a,I},a,I))+\zeta_{\varepsilon}(e(\eta_{b,J},b,J))+2$. But using lemma \ref{technicallem} one gets:
\begin{equation*}
\zeta_{\varepsilon}\left(\left(e(\eta_{a,I},a,I)e(\eta_{b,J},b,J)\right)|_{\mathrm{int}}\right)\geqslant2n\left(\val_{V}(\eta_{a,I})+\val_{V}(\eta_{b,J})+u(a)+u(b)\right)-\varepsilon\lvert a+b\rvert
\end{equation*}
because in the integral part we always have $u(a+b)=0$, which ends the proof as $n\geqslant\#I+1$ and $n\geqslant\#J+1$ since we supposed that $I_{0}\neq\emptyset$ and $J_{0}\neq\emptyset$.
\end{proof}

\begin{prop}
For any $\varepsilon>0$ and any $x,y\in W\Omega_{k[\underline{X}]/k}^{\mathrm{frac}}$ we have:
\begin{equation*}
\left(\zeta_{\varepsilon}(x)\neq-\infty\wedge\zeta_{\varepsilon}(y)\neq-\infty\right)\implies\zeta_{\varepsilon}((xy)|_{\mathrm{int}})\geqslant\zeta_{\varepsilon}(x)+\zeta_{\varepsilon}(y)\text{.}
\end{equation*}
\end{prop}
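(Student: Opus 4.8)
The plan is to fix a homogeneous degree and reduce everything to products of two basic elements. First I would record two facts. By \eqref{zetaaddition}, and because the three summands of \eqref{decomposition} are indexed by a partition of $\mathcal{P}$, the function $\zeta_{\varepsilon}$ is the minimum of its values on those summands; hence for $x$ fractional one has $\zeta_{\varepsilon}(x)=\min\{\zeta_{\varepsilon}(x|_{\mathrm{frp}}),\zeta_{\varepsilon}(x|_{\mathrm{d(frp)}})\}$, so in particular $\zeta_{\varepsilon}(x|_{\mathrm{frp}})\geqslant\zeta_{\varepsilon}(x)$ and $\zeta_{\varepsilon}(x|_{\mathrm{d(frp)}})\geqslant\zeta_{\varepsilon}(x)$, and similarly for $y$. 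Next, by proposition \ref{dactionone} the differential $d$ acts on a pure fractional basic element by adjoining $\min(a)$ to its partition, which changes neither $u(a)$ nor the number of factors of \eqref{eelement}; with \eqref{detzetavareps} this yields $\zeta_{\varepsilon}(d(z))=\zeta_{\varepsilon}(z)$ for every pure fractional $z$, and, applied termwise, it shows that $d$ commutes with the projection onto the integral part. Writing $xy$ as the sum of the four products $x|_{\mathrm{frp}}y|_{\mathrm{frp}}$, $x|_{\mathrm{frp}}y|_{\mathrm{d(frp)}}$, $x|_{\mathrm{d(frp)}}y|_{\mathrm{frp}}$ and $x|_{\mathrm{d(frp)}}y|_{\mathrm{d(frp)}}$ and applying \eqref{zetaaddition} once more, it suffices to bound $\zeta_{\varepsilon}$ of the integral part of each of these four products below by the sum of $\zeta_{\varepsilon}$ of its two factors.

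The product of two pure fractional elements is handled by the preceding proposition, which even gives a bonus of $2$. For two elements of $d\bigl(W\Omega_{k[\underline{X}]/k}^{\mathrm{frp}}\bigr)$, writing $x|_{\mathrm{d(frp)}}=d(\tilde x)$ and $y|_{\mathrm{d(frp)}}=d(\tilde y)$ with $\tilde x,\tilde y$ pure fractional, one has $d(\tilde x)d(\tilde y)=\pm d\bigl(\tilde x\,d(\tilde y)\bigr)$ by \eqref{dbasicprop} and $d\circ d=0$, so this case reduces, via \eqref{detzetavareps} and the $d$-invariance above, to a product of a pure fractional element with an element of $d\bigl(W\Omega_{k[\underline{X}]/k}^{\mathrm{frp}}\bigr)$. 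Thus the heart of the matter is this mixed product, which at the level of basic elements is $e(\eta,a,I)e(\eta',b,J)$ with $u(a)>0$, $u(b)>0$, $I_{0}\neq\emptyset$ and $J_{0}=\emptyset$ (the opposite order following by anticommutativity). In an integral output term $e(s(L),a+b,L)$ one has $u(a+b)=0$, so $\zeta_{\varepsilon}(e(s(L),a+b,L))=2n\val_{V}(s(L))-\varepsilon\lvert a+b\rvert$ whether or not $L_{0}$ is empty. If $u(a)\geqslant u(b)$, lemma \ref{technicallem} applies directly (with $v=0$) and gives $\val_{V}(s(L))\geqslant\val_{V}(\eta)+\val_{V}(\eta')+u(a)$ for every $L$. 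If $u(b)>u(a)$, I would peel off $\min(b)$ as in the proof of proposition \ref{mainprop}: using $e(\eta',b,J)=d\bigl(e(\eta',b,J\smallsetminus\{\min(b)\})\bigr)$ and \eqref{dbasicprop}, the product equals, up to sign, $d\bigl(e(\eta,a,I)e(\eta',b,J\smallsetminus\{\min(b)\})\bigr)$ plus $e(\eta',b,J\smallsetminus\{\min(b)\})\,d(e(\eta,a,I))$; the first term is controlled by the preceding proposition and \eqref{detzetavareps}, and in the second lemma \ref{technicallem} now does apply, since after removing $\min(b)$ the factor whose weight has the larger $u$ has a nonempty initial block, and it gives $\val_{V}\geqslant\val_{V}(\eta)+\val_{V}(\eta')+u(b)$ on the integral output terms. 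In all cases the desired inequality reduces to $2n\max\{u(a),u(b)\}\geqslant(\#I+1)u(a)+\#Ju(b)$, which holds because $\#I+1\leqslant n$ (as $I_{0}\neq\emptyset$) and $\#J\leqslant n$, whence $(\#I+1)u(a)+\#Ju(b)\leqslant(\#I+\#J+1)\max\{u(a),u(b)\}\leqslant 2n\max\{u(a),u(b)\}$.

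I expect the main obstacle to be exactly the mixed case with $u(b)>u(a)$, where lemma \ref{technicallem} does not apply to the given order of factors and where proposition \ref{mainprop} only supplies the weaker bound $\val_{V}(s(L))\geqslant\val_{V}(\eta)+\val_{V}(\eta')+\min\{u(a),u(b)\}$ on the output terms with $L_{0}=\emptyset$; one genuinely has to reproduce the integration-by-parts step from the proof of proposition \ref{mainprop}. The care needed there is twofold: checking that the step does not loop — one application suffices, because after removing $\min(b)$ the factor whose weight has the larger $u$ acquires a nonempty initial block, which is precisely the hypothesis of lemma \ref{technicallem} — and checking that moving a differential across a product does not decrease $\zeta_{\varepsilon}$, which is guaranteed by \eqref{detzetavareps} and the $d$-invariance of $\zeta_{\varepsilon}$ on pure fractional elements.
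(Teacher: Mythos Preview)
Your argument is correct, but it is longer than necessary because you missed the one observation on which the paper's proof hinges: if $u(a)>0$, $u(b)>0$ and $u(a+b)=0$, then necessarily $u(a)=u(b)$. Indeed, if $u(a)\neq u(b)$ then $\val_{p}(a+b)=\min\{\val_{p}(a),\val_{p}(b)\}<0$, so $u(a+b)=\max\{u(a),u(b)\}>0$. Hence on the integral part the ``weak'' bound $\val_{V}(s(L))\geqslant\val_{V}(\eta)+\val_{V}(\eta')+\min\{u(a),u(b)\}$ furnished by proposition \ref{mainprop} is already the ``strong'' one, and your case $u(b)>u(a)$ in the mixed product simply contributes nothing to $(xy)|_{\mathrm{int}}$. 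This makes the integration-by-parts step you carry out there superfluous; the paper invokes proposition \ref{mainprop} once for any basic product with $I_{0}\neq\emptyset$ (no hypothesis on $J_{0}$), and then uses a single integration-by-parts only to reduce the remaining case $I_{0}=\emptyset$ to that one.

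So the two proofs differ in organisation rather than in substance: the paper splits on whether $I_{0}$ is empty, you split on the four pieces of the decomposition $\mathrm{frp}\oplus d(\mathrm{frp})$ and then further on $u(a)\gtrless u(b)$. Your route works, and the auxiliary facts you record ($d$ preserves the integral/fractional decomposition, and $\zeta_{\varepsilon}(d(z))=\zeta_{\varepsilon}(z)$ for $z$ pure fractional) are correct and useful; but the shortcut $u(a)=u(b)$ collapses the whole mixed case and makes the argument noticeably shorter.
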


\begin{proof}
We will first show that for any $(a,I),(b,J)\in\mathcal{P}$ such that $u(a)\neq0$, $I_{0}\neq\emptyset$ and $u(b)\neq0$, and any $\eta_{a,I},\eta_{b,J}\in W(k)$ we always have:
\begin{equation*}
\zeta_{\varepsilon}\left(\left(e(\eta_{a,I},a,I)e(\eta_{b,J},b,J)\right)|_{\mathrm{int}}\right)\geqslant\zeta_{\varepsilon}(e(\eta_{a,I},a,I))+\zeta_{\varepsilon}(e(\eta_{b,J},b,J))\text{.}
\end{equation*}

Due to proposition \ref{mainprop} we see that: 
\begin{multline*}
\zeta_{\varepsilon}\left(\left(e(\eta_{a,I},a,I)e(\eta_{b,J},b,J)\right)|_{\mathrm{int}}\right)\\\geqslant2n\left(\val_{V}(\eta_{a,I})+\val_{V}(\eta_{b,J})+\min\{u(a),u(b)\}\right)-\varepsilon\lvert a+b\rvert
\end{multline*}
because in the integral part we have $u(a+b)=0$, which is only possible if $u(a)=u(b)$. This proves this specific case because $2n\geqslant(\#I+1+\#J)$ if $J_{0}=\emptyset$, and $2n\geqslant(\#I+1+\#J+1)$ otherwise.

For the general case, notice that if $I_{0}=\emptyset$, then proposition \ref{dactionone} gives us the equality:
\begin{multline*}
e(\eta_{a,I},a,I)e(\eta_{b,J},b,J)=d(e(\eta_{a,I},a,I\smallsetminus\{\min(a)\})e(\eta_{b,J},b,J))\\-(-1)^{\#I-1}e(\eta_{a,I},a,I\smallsetminus\{\min(a)\})d(e(\eta_{b,J},b,J))\text{.}
\end{multline*}

Therefore, we can conclude using the first paragraph as well as \eqref{detzetavareps}.
\end{proof}

\begin{prop}
Let $\varepsilon>0$, $x\in W\Omega_{k[\underline{X}]/k}^{\mathrm{frp}}$ and $y\in d\left(W\Omega_{k[\underline{X}]/k}^{\mathrm{frp}}\right)$. We have:
\begin{equation*}
\left(\zeta_{\varepsilon}(x)\neq-\infty\wedge\zeta_{\varepsilon}(y)\neq-\infty\right)\implies\zeta_{\varepsilon}((xy)|_{\mathrm{d(frp)}})\geqslant\zeta_{\varepsilon}(x)+\zeta_{\varepsilon}(y)+1\text{.}
\end{equation*}
\end{prop}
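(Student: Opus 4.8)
The plan is to imitate the previous propositions and reduce immediately to basic elements. By the decomposition \eqref{decomposition} and \eqref{zetaaddition}, an element of $W\Omega_{k[\underline{X}]/k}^{\mathrm{frp}}$ is a sum of elements $e(\eta_{a,I},a,I)$ with $u(a)\neq0$ and $I_{0}\neq\emptyset$, whereas an element of $d\left(W\Omega_{k[\underline{X}]/k}^{\mathrm{frp}}\right)$ is, by proposition \ref{dactionone}, a sum of elements $e(\eta_{b,J},b,J)$ with $u(b)\neq0$ and $J_{0}=\emptyset$. So it suffices to show, for all such $(a,I),(b,J)\in\mathcal{P}$ and all $\eta_{a,I},\eta_{b,J}\in W(k)$, that
\begin{equation*}
\zeta_{\varepsilon}\left(\left(e(\eta_{a,I},a,I)e(\eta_{b,J},b,J)\right)|_{\mathrm{d(frp)}}\right)\geqslant\zeta_{\varepsilon}(e(\eta_{a,I},a,I))+\zeta_{\varepsilon}(e(\eta_{b,J},b,J))+1\text{.}
\end{equation*}

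First I would invoke proposition \ref{mainprop}, which is the appropriate product formula here since the left factor satisfies $I_{0}\neq\emptyset$ and we assume nothing relating $u(a)$ and $u(b)$; it writes $e(\eta_{a,I},a,I)e(\eta_{b,J},b,J)=\sum_{L\in P}e(s(L),a+b,L)$, where $P$ is the set of partitions of $\Supp(a+b)$ of size $\#I+\#J$ and $\val_{V}(s(L))\geqslant\val_{V}(\eta_{a,I})+\val_{V}(\eta_{b,J})+\min\{u(a),u(b)\}$ whenever $L_{0}=\emptyset$. If $u(a+b)=0$ then every $e(s(L),a+b,L)$ is integral, so $\left(e(\eta_{a,I},a,I)e(\eta_{b,J},b,J)\right)|_{\mathrm{d(frp)}}=0$ and there is nothing to prove. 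Otherwise this projection equals $\sum_{L\in P,\ L_{0}=\emptyset}e(s(L),a+b,L)$; applying the definition of $\zeta_{\varepsilon}$ and \eqref{zetaaddition}, and cancelling the $\varepsilon$-terms using $\lvert a+b\rvert=\lvert a\rvert+\lvert b\rvert$, the claim reduces to the numerical inequality
\begin{equation*}
2n\min\{u(a),u(b)\}+(\#I+\#J)u(a+b)\geqslant(\#I+1)u(a)+\#Ju(b)+1\text{.}
\end{equation*}

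The heart of the argument is this inequality, which I would prove by cases; the case $n=0$ is vacuous (then $W\Omega_{k[\underline{X}]/k}\cong W(k)$), so assume $n\geqslant1$. The facts available are $\#I+1\leqslant n$ (from $I_{0}\neq\emptyset$), $\#J\leqslant n$, and $\#J\geqslant1$ (since $J_{0}=\emptyset$ and $u(b)\neq0$ force $\min(b)\in J$), together with $u(a),u(b),u(a+b)\geqslant1$ in the remaining case. The delicate point---and the one I expect to be the real obstacle---is that, in contrast with proposition \ref{frpdfrpfrpzeta}, here it is $\min\{u(a),u(b)\}$ and not $\max\{u(a),u(b)\}$ that occurs on the left, so the crude bound $u(a+b)\leqslant\max\{u(a),u(b)\}$ is not enough when the two denominators are far apart. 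I would therefore first establish the sharper fact that $u(a)\neq u(b)$ forces $u(a+b)=\max\{u(a),u(b)\}$: indeed $u(a)\neq u(b)$ gives $\val_{p}(a)\neq\val_{p}(b)$, and since the weight functions take values in $\mathbb{N}\left[\frac{1}{p}\right]$, an index realizing $\min\{\val_{p}(a),\val_{p}(b)\}$ cannot be a site of $p$-adic cancellation, so $\val_{p}(a+b)=\min\{\val_{p}(a),\val_{p}(b)\}$. Granting this, when $u(a)=u(b)$ the inequality follows from $\#I+\#J+1\leqslant2n$, $\#J\geqslant1$ and $u(a+b)\geqslant1$; when $u(a)<u(b)$ one substitutes $u(a+b)=u(b)$ and is left with $(2n-\#I-1)u(a)+\#Iu(b)\geqslant1$, which holds because $2n-\#I-1\geqslant n\geqslant1$ and $u(a)\geqslant1$; and when $u(b)<u(a)$ one substitutes $u(a+b)=u(a)$ and is left with $(2n-\#J)u(b)+(\#J-1)u(a)\geqslant1$, which holds because $2n-\#J\geqslant n\geqslant1$, $u(b)\geqslant1$ and $\#J\geqslant1$.
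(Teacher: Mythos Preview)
Your proof is correct and follows essentially the same approach as the paper: reduce to basic elements, apply proposition \ref{mainprop} to obtain the numerical inequality $2n\min\{u(a),u(b)\}+(\#I+\#J)u(a+b)\geqslant(\#I+1)u(a)+\#Ju(b)+1$, and conclude by distinguishing the cases $u(a)=u(b)$ versus $u(a)\neq u(b)$ (in which case $u(a+b)=\max\{u(a),u(b)\}$). Your write-up is somewhat more explicit---you spell out the justification that $u(a)\neq u(b)$ forces $u(a+b)=\max\{u(a),u(b)\}$, and you split the unequal case into $u(a)<u(b)$ and $u(b)<u(a)$---but the underlying argument is the same as the paper's.
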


\begin{proof}
We only have to show that for any $(a,I),(b,J)\in\mathcal{P}$ such that $u(a)\neq0$, $I_{0}\neq\emptyset$, $u(b)\neq0$ and $J_{0}=\emptyset$, and any Witt vectors $\eta_{a,I},\eta_{b,J}\in W(k)$ we have the inequality $\zeta_{\varepsilon}\left((e(\eta_{a,I},a,I)e(\eta_{b,J},b,J))|_{\mathrm{d(frp)}}\right)\geqslant\zeta_{\varepsilon}(e(\eta_{a,I},a,I))+\zeta_{\varepsilon}(e(\eta_{b,J},b,J))+1$. Using proposition \ref{mainprop} one finds that:
\begin{multline*}
\zeta_{\varepsilon}\left((e(\eta_{a,I},a,I)e(\eta_{b,J},b,J))|_{\mathrm{d(frp)}}\right)\\\geqslant2n\left(\val_{V}(\eta_{a,I})+\val_{V}(\eta_{b,J})+\min\{u(a),u(b)\}\right)\\+(\#I+\#J)u(a+b)-\varepsilon\lvert a+b\rvert\text{.}
\end{multline*}

So the proof is over if:
\begin{equation*}
2n\min\{u(a),u(b)\}+(\#I+\#J)u(a+b)\geqslant(\#I+1)u(a)+\#Ju(b)+1\text{.}
\end{equation*}

Since $\#I+1\leqslant n$ and $1\leqslant\#J\leqslant n$ because we have supposed that $I_{0}\neq\emptyset$ and $J_{0}=\emptyset$, and since $u(a+b)\neq0$ because we study the fractional part, this inequality becomes obvious whenever $u(a)=u(b)$; if not then $u(a+b)=\max\{u(a),u(b)\}$, and we are done.
\end{proof}

\begin{prop}
For any $\varepsilon>0$ and any $x,y\in d\left(W\Omega^{\mathrm{frp}}_{k[\underline{X}]/k}\right)$ we get:
\begin{equation*}
\left(\zeta_{\varepsilon}(x)\neq-\infty\wedge\zeta_{\varepsilon}(y)\neq-\infty\right)\implies\zeta_{\varepsilon}((xy)|_{\mathrm{d(frp)}})\geqslant\zeta_{\varepsilon}(x)+\zeta_{\varepsilon}(y)\text{.}
\end{equation*}
\end{prop}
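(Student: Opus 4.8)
The plan is to reduce the statement to proposition \ref{frpdfrpfrpzeta} by writing $x$ as a differential. Since $x\in d\left(W\Omega^{\mathrm{frp}}_{k[\underline{X}]/k}\right)$, choose $x'\in W\Omega^{\mathrm{frp}}_{k[\underline{X}]/k}$ with $d(x')=x$. Every basic element occurring in $x'$ is of the form $e(\eta,a,I)$ with $u(a)\neq0$ (so $\val_{p}(a)<0$) and $I_{0}\neq\emptyset$, hence proposition \ref{dactionone} gives $d(e(\eta,a,I))=e(\eta,a,I\cup\{\min(a)\})$. This leaves $\eta$ and $a$ untouched, and since $I\cup\{\min(a)\}$ has empty $0$-th block, the coefficient of $u(a)$ in the definition of $\zeta_{\varepsilon}$ equals $\#(I\cup\{\min(a)\})=\#I+1$ both before and after applying $d$. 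Therefore $\zeta_{\varepsilon}(x')=\zeta_{\varepsilon}(x)$; in particular $\zeta_{\varepsilon}(x')\neq-\infty$.

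Next I would move the product under $d$. As $y$ is exact we have $d(y)=0$, so the Leibniz rule \eqref{dbasicprop} gives $xy=d(x')y=d(x'y)$. Decompose $x'y$ along \eqref{decomposition} as $x'y=(x'y)|_{\mathrm{int}}+(x'y)|_{\mathrm{frp}}+(x'y)|_{\mathrm{d(frp)}}$ and apply $d$: the last summand is killed, and proposition \ref{dactionone} shows that $d$ maps $W\Omega^{\mathrm{int}}_{k[\underline{X}]/k}$ into itself, since an integral basic element has $u(a)=0$, hence $\val_{p}(a)\geqslant0$, so its differential is either $0$ or, up to a harmless factor of $p^{\val_{p}(a)}$, again an integral basic element. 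Thus $xy=d\left((x'y)|_{\mathrm{int}}\right)+d\left((x'y)|_{\mathrm{frp}}\right)$ with the first term in $W\Omega^{\mathrm{int}}_{k[\underline{X}]/k}$ and the second in $d\left(W\Omega^{\mathrm{frp}}_{k[\underline{X}]/k}\right)$; by uniqueness of the decomposition \eqref{decomposition} this forces $(xy)|_{\mathrm{d(frp)}}=d\left((x'y)|_{\mathrm{frp}}\right)$.

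Finally I would chain the estimates. By \eqref{detzetavareps} we get $\zeta_{\varepsilon}\left((xy)|_{\mathrm{d(frp)}}\right)=\zeta_{\varepsilon}\left(d\left((x'y)|_{\mathrm{frp}}\right)\right)\geqslant\zeta_{\varepsilon}\left((x'y)|_{\mathrm{frp}}\right)$, and then proposition \ref{frpdfrpfrpzeta}, applied to the pure fractional element $x'$ and to $y\in d\left(W\Omega^{\mathrm{frp}}_{k[\underline{X}]/k}\right)$, yields $\zeta_{\varepsilon}\left((x'y)|_{\mathrm{frp}}\right)\geqslant\zeta_{\varepsilon}(x')+\zeta_{\varepsilon}(y)=\zeta_{\varepsilon}(x)+\zeta_{\varepsilon}(y)$, which is the desired inequality.

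The only point requiring care is the identification $(xy)|_{\mathrm{d(frp)}}=d\left((x'y)|_{\mathrm{frp}}\right)$: everything hinges on the stability of $W\Omega^{\mathrm{int}}_{k[\underline{X}]/k}$ under $d$, so that the integral part $d\left((x'y)|_{\mathrm{int}}\right)$ of $xy$ cannot leak into the $d\left(W\Omega^{\mathrm{frp}}_{k[\underline{X}]/k}\right)$-component. Beyond that, the argument is bookkeeping with the already established Leibniz rule and propositions, and signs play no role since $\zeta_{\varepsilon}$ is insensitive to them; I do not expect any serious obstacle.
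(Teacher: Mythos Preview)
Your argument is correct and follows essentially the same route as the paper's proof: lift one of the two exact elements to a pure fractional preimage (the paper lifts $y$ to $y'$, you lift $x$ to $x'$), use exactness of the other to write $xy$ as $d$ of a product, and then invoke \eqref{detzetavareps} together with proposition \ref{frpdfrpfrpzeta}. Your explicit identification $(xy)|_{\mathrm{d(frp)}}=d\bigl((x'y)|_{\mathrm{frp}}\bigr)$ via the stability of $W\Omega^{\mathrm{int}}_{k[\underline{X}]/k}$ under $d$ makes transparent a step the paper leaves implicit, but the underlying strategy is the same.
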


\begin{proof}
One can suppose without any loss of generality that $x\in W\Omega_{k[\underline{X}]/k}^{i}$ for some $i\in\mathbb{N}$. Put $y'\in W\Omega_{k[\underline{X}]/k}^{\mathrm{frp}}$ such that $d(y')=y$. Using proposition \ref{dactionone} we get $\zeta_{\varepsilon}(y')=\zeta_{\varepsilon}(y)$. But $xy=(-1)^{i}d(xy')$, so we can conclude thanks to \eqref{zetaaddition}, \eqref{detzetavareps} as well as proposition \ref{frpdfrpfrpzeta}.
\end{proof}

Notice that if one takes $x,y\in W\Omega_{k[\underline{X}]/k}^{\mathrm{int}}$, then $xy\in W\Omega_{k[\underline{X}]/k}^{\mathrm{int}}$. In particular, $(xy)|_{\mathrm{frp}}=(xy)|_{\mathrm{d(frp)}}=0$, which implies that $\zeta_{\varepsilon}\left((xy)|_{\mathrm{frp}}\right)=\zeta_{\varepsilon}\left((xy)|_{\mathrm{d(frp)}}\right)=+\infty$ for any $\varepsilon>0$.

In a similar fashion, if $x\in W\Omega_{k[\underline{X}]/k}^{\mathrm{int}}$ and $y\in W\Omega_{k[\underline{X}]/k}^{\mathrm{frac}}$, lemma \ref{technicallem} implies that $\zeta_{\varepsilon}\left((xy)|_{\mathrm{int}}\right)=+\infty$ for any $\varepsilon>0$.

Also, if $x,y\in d\left(W\Omega_{k[\underline{X}]/k}^{\mathrm{frp}}\right)$, so as $xy$ lays in the image of $d$ we get $(xy)|_{\mathrm{frp}}=0$, which in turns implies that $\zeta_{\varepsilon}\left((xy)|_{\mathrm{frp}}\right)=+\infty$ for any $\varepsilon>0$.

The following table compiles all of the propositions we have shown concerning the function $\zeta_{\varepsilon}$ for any $\varepsilon>0$ (we will always suppose that $\zeta_{\varepsilon}(x)\neq-\infty$ and $\zeta_{\varepsilon}(y)\neq-\infty$).
\begin{equation*}
\begin{array}{|c|c|c|c|}
\hline&\zeta_{\varepsilon}\left((xy)|_{\mathrm{int}}\right)\geqslant&\zeta_{\varepsilon}\left((xy)|_{\mathrm{frp}}\right)\geqslant&\zeta_{\varepsilon}\left((xy)|_{\mathrm{d(frp)}}\right)\geqslant\\
\hline\begin{array}{c}x\in W\Omega_{k[\underline{X}]/k}^{\mathrm{int}}\\y\in W\Omega_{k[\underline{X}]/k}^{\mathrm{int}}\end{array}&\zeta_{\varepsilon}\left(x\right)+\zeta_{\varepsilon}\left(y\right)&+\infty&+\infty\\
\hline\begin{array}{c}x\in W\Omega_{k[\underline{X}]/k}^{\mathrm{frp}}\\y\in W\Omega_{k[\underline{X}]/k}^{\mathrm{int}}\end{array}&+\infty&\zeta_{\varepsilon}\left(x\right)+\zeta_{\varepsilon}\left(y\right)&\zeta_{\varepsilon}\left(x\right)+\zeta_{\varepsilon}\left(y\right)+1\\
\hline\begin{array}{c}x\in d\left(W\Omega_{k[\underline{X}]/k}^{\mathrm{frp}}\right)\\y\in W\Omega_{k[\underline{X}]/k}^{\mathrm{int}}\end{array}&+\infty&\zeta_{\varepsilon}\left(x\right)+\zeta_{\varepsilon}\left(y\right)&\zeta_{\varepsilon}\left(x\right)+\zeta_{\varepsilon}\left(y\right)\\
\hline\begin{array}{c}x\in W\Omega_{k[\underline{X}]/k}^{\mathrm{frp}}\\y\in W\Omega_{k[\underline{X}]/k}^{\mathrm{frp}}\end{array}&\zeta_{\varepsilon}\left(x\right)+\zeta_{\varepsilon}\left(y\right)+2&\zeta_{\varepsilon}\left(x\right)+\zeta_{\varepsilon}\left(y\right)+1&\zeta_{\varepsilon}\left(x\right)+\zeta_{\varepsilon}\left(y\right)+3\\
\hline\begin{array}{c}x\in d\left(W\Omega_{k[\underline{X}]/k}^{\mathrm{frp}}\right)\\y\in W\Omega_{k[\underline{X}]/k}^{\mathrm{frp}}\end{array}&\zeta_{\varepsilon}\left(x\right)+\zeta_{\varepsilon}\left(y\right)&\zeta_{\varepsilon}\left(x\right)+\zeta_{\varepsilon}\left(y\right)&\zeta_{\varepsilon}\left(x\right)+\zeta_{\varepsilon}\left(y\right)+1\\
\hline\begin{array}{c}x\in d\left(W\Omega_{k[\underline{X}]/k}^{\mathrm{frp}}\right)\\y\in d\left(W\Omega_{k[\underline{X}]/k}^{\mathrm{frp}}\right)\end{array}&\zeta_{\varepsilon}\left(x\right)+\zeta_{\varepsilon}\left(y\right)&+\infty&\zeta_{\varepsilon}\left(x\right)+\zeta_{\varepsilon}\left(y\right)\\
\hline
\end{array}
\end{equation*}

In particular, this proves the main theorem of this paper:

\begin{thrm}
For any $\varepsilon>0$, the function $\zeta_{\varepsilon}$ is a pseudovaluation.
\end{thrm}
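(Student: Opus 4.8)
The plan is to deduce the statement from the table compiled above together with the basic property \eqref{zetaaddition}: the product inequality is exactly what the table encodes, and the remaining axioms of a pseudovaluation are routine to check directly.

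First I would dispatch the elementary axioms. By Theorem \ref{structuretheorem} the expansion $x=\sum_{(a,I)\in\mathcal{P}}e(\eta_{a,I},a,I)$ is unique, so $\zeta_{\varepsilon}$ is a well-defined map into $\mathbb{R}\cup\{+\infty,-\infty\}$, being the infimum of a nonempty family in $\mathbb{R}\cup\{+\infty\}$. From \eqref{eelement} one reads off that $e(\cdot,a,I)$ is additive in its first argument, since $V$, $d$ and multiplication in $W(k[\underline{X}])$ all are; in particular $e(-\eta,a,I)=-e(\eta,a,I)$, and together with $\val_{V}(-\eta)=\val_{V}(\eta)$ this yields $\zeta_{\varepsilon}(-x)=\zeta_{\varepsilon}(x)$. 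Taking all $\eta_{a,I}=0$ gives $\zeta_{\varepsilon}(0)=+\infty$; writing $1=e(1,0,\emptyset)$ with $0$ the zero weight function (so that $\Supp(0)=\emptyset$, $u(0)=0$, and the only partition is empty) gives $\zeta_{\varepsilon}(1)=0$; and $\zeta_{\varepsilon}(x+y)\geqslant\min\{\zeta_{\varepsilon}(x),\zeta_{\varepsilon}(y)\}$ is \eqref{zetaaddition}.

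It then remains to establish $\zeta_{\varepsilon}(xy)\geqslant\zeta_{\varepsilon}(x)+\zeta_{\varepsilon}(y)$ when $\zeta_{\varepsilon}(x),\zeta_{\varepsilon}(y)\neq-\infty$. The key structural remark is that the three summands of \eqref{decomposition} are indexed by the three pairwise disjoint subsets of $\mathcal{P}$ defined by $u(a)=0$, by $u(a)\neq0$ with $I_{0}\neq\emptyset$, and by $u(a)\neq0$ with $I_{0}=\emptyset$, and that these exhaust $\mathcal{P}$; hence the infimum defining $\zeta_{\varepsilon}$ splits, so $\zeta_{\varepsilon}(z)=\min\{\zeta_{\varepsilon}(z|_{\mathrm{int}}),\zeta_{\varepsilon}(z|_{\mathrm{frp}}),\zeta_{\varepsilon}(z|_{\mathrm{d(frp)}})\}$ for every $z$. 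In particular $\zeta_{\varepsilon}(x)\leqslant\zeta_{\varepsilon}(x|_{\sigma})$ and $\zeta_{\varepsilon}(y)\leqslant\zeta_{\varepsilon}(y|_{\tau})$ for all $\sigma,\tau\in\{\mathrm{int},\mathrm{frp},\mathrm{d(frp)}\}$, and since $\zeta_{\varepsilon}(x),\zeta_{\varepsilon}(y)\neq-\infty$ all six projections also avoid $-\infty$, so the hypotheses of the propositions gathered in the table are met. I would then expand $xy=\sum_{\sigma,\tau}(x|_{\sigma})(y|_{\tau})$ over the nine pairs $(\sigma,\tau)$ — using $\zeta_{\varepsilon}(uv)=\zeta_{\varepsilon}(vu)$, which is immediate from graded-commutativity and $\zeta_{\varepsilon}(-w)=\zeta_{\varepsilon}(w)$, to match the nine ordered pairs to the six rows of the table — and bound, for each projection $\pi$, the quantity $\zeta_{\varepsilon}(((x|_{\sigma})(y|_{\tau}))|_{\pi})$ by the corresponding table entry, which always has the form $\zeta_{\varepsilon}(x|_{\sigma})+\zeta_{\varepsilon}(y|_{\tau})+c$ with $c$ a nonnegative integer or $+\infty$. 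As $c\geqslant0$ and $\zeta_{\varepsilon}(x)\leqslant\zeta_{\varepsilon}(x|_{\sigma})$, $\zeta_{\varepsilon}(y)\leqslant\zeta_{\varepsilon}(y|_{\tau})$, each entry is $\geqslant\zeta_{\varepsilon}(x)+\zeta_{\varepsilon}(y)$; applying \eqref{zetaaddition} to the nine terms gives $\zeta_{\varepsilon}((xy)|_{\pi})\geqslant\zeta_{\varepsilon}(x)+\zeta_{\varepsilon}(y)$ for each $\pi$, hence $\zeta_{\varepsilon}(xy)\geqslant\zeta_{\varepsilon}(x)+\zeta_{\varepsilon}(y)$. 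When $n=0$ one has $W\Omega_{k[\underline{X}]/k}\cong W(k)$ and $\zeta_{\varepsilon}$ is the trivial pseudovaluation, so nothing is needed there.

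I do not expect a genuine obstacle here: the substantive content — the explicit products of Section 2 and the case-by-case estimates filling the table — is already behind us, and what remains is organisation. The one point requiring attention is the claim that $\zeta_{\varepsilon}$ is the minimum of its three components along \eqref{decomposition}, since this is precisely what renders the nonnegative shifts $c$ in the table harmless and what turns the six rows into the product inequality; tracking the signs from graded-commutativity is then only a formality in view of $\zeta_{\varepsilon}(-w)=\zeta_{\varepsilon}(w)$.
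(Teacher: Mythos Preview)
Your proposal is correct and follows exactly the approach the paper intends: the product inequality is obtained by decomposing $x$ and $y$ along \eqref{decomposition}, reading off the nine cross-terms from the table (using graded commutativity for the off-diagonal pairs), and applying \eqref{zetaaddition}; the remaining pseudovaluation axioms are verified just as you describe. The one observation you make explicit---that $\zeta_{\varepsilon}(z)=\min\{\zeta_{\varepsilon}(z|_{\mathrm{int}}),\zeta_{\varepsilon}(z|_{\mathrm{frp}}),\zeta_{\varepsilon}(z|_{\mathrm{d(frp)}})\}$ because the three pieces of \eqref{decomposition} correspond to a partition of the index set $\mathcal{P}$---is exactly the point that makes the table suffice, and the paper leaves it implicit.
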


\begin{proof}
This is straightforward using \eqref{zetaaddition} and the previous table.
\end{proof}

In subsequent papers, we will use this theorem and this table in order to study the local structure of the overconvergent de Rham-Witt complex, and give an interpretation of $F$-isocrystals in this context.

\end{document}